\documentclass[a4paper, 12pt]{amsart}

\usepackage[T1]{fontenc}
\usepackage[utf8]{inputenc}

\usepackage{amssymb}
\usepackage{mathrsfs}
\usepackage{mathtools}
\usepackage{enumitem}
\usepackage{stmaryrd}
\usepackage{hyperref}
\usepackage{microtype}
\usepackage{xparse}
\usepackage{cite}
\usepackage{scalerel}
\usepackage{graphicx}
\usepackage{tikz}

\allowdisplaybreaks[2]

\newtheorem{theorem}{Theorem}[section]
\newtheorem{proposition}[theorem]{Proposition}
\newtheorem{lemma}[theorem]{Lemma}

\theoremstyle{definition}
\newtheorem{remark}[theorem]{Remark}
\newtheorem{definition}[theorem]{Definition}
\newtheorem{example}[theorem]{Example}

\numberwithin{equation}{section}

\theoremstyle{theorem}
\newtheorem{mainthm}{Theorem}

\theoremstyle{definition}

\newcommand{\N}{\mathbb N}

\newcommand{\Z}{\mathbb Z}

\newcommand{\I}{\mathbb I}

\newcommand{\Acal}{\mathcal{A}}
\newcommand{\Ical}{\mathcal{I}}

\newcommand{\AAA}{\mathfrak{A}}

\newcommand{\MMM}{\mathfrak m}
\newcommand{\NNN}{\mathfrak n}

\newcommand{\V}{\mathcal V}

\DeclareMathOperator{\M}{\mathcal{M}}

\DeclareMathOperator{\loc}{\Delta}
\DeclareMathOperator{\pp}{\mathfrak{p}}

\DeclareMathOperator{\Ext}{Ext}

\newcommand{\p}[1]{\partial #1}
\newcommand{\JH}[1]{[#1]}
\newcommand{\down}[1]{\downarrow #1}
\newcommand{\fracideal}[3]{{}_{#1} {#2} {}_{#3}}

\DeclareMathOperator{\ann}{ann}

\DeclareMathOperator{\diag}{diag}

\DeclareMathOperator{\Div}{Div}
\DeclareMathOperator{\modulo}{mod}
\DeclareMathOperator{\T}{\mathcal{T}}

\ExplSyntaxOn
\NewDocumentCommand{\cycle}{ O{\;} m }
{
	(
	\alec_cycle:nn { #1 } { #2 }
	)
}

\seq_new:N \l_alec_cycle_seq
\cs_new_protected:Npn \alec_cycle:nn #1 #2
{
	\seq_set_split:Nnn \l_alec_cycle_seq { , } { #2 }
	\seq_use:Nn \l_alec_cycle_seq { #1 }
}
\ExplSyntaxOff

\title[Multiplicative One-Sided Ideal Theory of HNP rings]{Multiplicative One-Sided Ideal Theory of Hereditary Noetherian Prime rings}

\author{Daniel Vitas}
\address{Department of Mathematics, Faculty of Mathematics and Physics,  University of Ljubljana, Slovenia \newline \indent 
Institute of Mathematics, Physics and Mechanics, Ljubljana, Slovenia}

\email{daniel.vitas@imfm.si}

\thanks{\emph{Mathematics Subject Classification} (2020). 16E60, 16N60, 16P40, 19A49}%, 19A49}
\keywords{hereditary noetherian prime rings, multiplicative ideal theory}

\begin{document}

\begin{abstract}
	To any essential right ideal $I$ in a bounded HNP ring $R$ we may assign a divisor $\p{I}$, the image of the finite length module $R/I$ in the Grothendieck group $K_0(\text{fl.\ mod-$R$})$. We show that there is a composition of divisors $\circ$ for which $\p{I J} = \p{I} \circ \p{J}$. Additionally, we describe this composition, and show that $\p{}$ is faithful.
\end{abstract}

\maketitle

%%%%%%%%%%%%%%%%%%%%%%%%%%%%%%%%%%%%%%%%%%%%%%%
\section{Introduction}

Dedekind domains are one of the most important classes of rings. These are commutative integral domains that admit unique factorization of ideals, i.e., every nonzero ideal $I$ factors as
$$ I = P_1 \cdots P_n$$
for some maximal ideals $P_i$, unique up to a permutation.

In the noncommutative setting there are two natural generalizations of Dedekind domains. The first are Dedekind prime rings, that is, rings in which nonzero submodules of progenerators are progenerators \cite[\S 5.2.10]{McR}. One of the most important examples of Dedekind prime rings are classical maximal orders \cite[\S 8]{Re}.

Dedekind prime rings also admit unique factorization of two-sided ideals, i.e., every nonzero two-sided ideal is a product of maximal two-sided ideals, unique up to a permutation (see \cite[Theorem 5.2.9]{McR} or \cite[Theorem 22.10]{Re}). %Furthermore, two-sided ideals commute with each other.

However, in a noncommutative ring, understanding just two-sided ideals is often not enough, and we would like to say something about one-sided ideals as well. For this we need to consider a category of one-sided ideals (as a generalization of a monoid) with objects being equivalent Dedekind prime rings in a fixed simple artinian quotient ring $Q$, and morphisms between two Dedekind prime rings $R$ and $S$ being integral $(R,S)$-ideals, i.e., fractional $(R,S)$-ideals contained in $R \cap S$. The composition of two morphisms is, of course, the product of ideals. Dedekind prime rings also admit unique factorization in this category of one-sided ideals, namely, for any integral $(R,S)$-ideal $I$ we have
$$ \fracideal{R}{I}{S} =  \fracideal{R}{(P_1)}{T_1} \cdots  \fracideal{T_{n-1}}{(P_n)}{S} $$
for some integral $(T_{i-1},T_i)$-ideals $P_i$ (with $T_0 = R$ and $T_n = S$) that are maximal both as right $T_i$-ideals and as left $T_{i-1}$-ideals. The rings $T_i$ need not be unique, but nevertheless, the simple right $T_i$-modules $T_i/P_i$ are, using the canonical identification between simple modules of equivalent Dedekind prime rings $T_i$ and $S$, precisely the factors of the Jordan-Hölder decomposition of the finite length right $S$-module $S/I$; in this sense the factorization of $I$ is unique \cite[Theorem 2.14]{McR} (cf.\ \cite[Theorem 22.18, Theorem 22.24]{Re}).

The second generalization of Dedekind domains are hereditary noetherian prime (HNP) rings, that is, noetherian prime rings in which submodules of projective modules are projective. In particular, all Dedekind prime rings are also HNP rings. An example are classical hereditary orders \cite[Chapter 8]{Re} (see also Examples \ref{example-2} and \ref{ex-3}). HNP rings have a well developed module theory \cite{LR} and factorization theory \cite{Sm4}.

Multiplicative two-sided ideal theory in HNP rings is more complicated since there exist idempotent ideals, which is clearly not the case for Dedekind prime rings. Nevertheless, recently, Rump and Yang \cite{RY, Ru}, described multiplication of two-sided ideals using so called \emph{divisors}, i.e., formal integer combinations of maximal two-sided ideals. To a two-sided ideal $I$ in an HNP ring $R$ they assign a divisor $\p{I}$, a combination of maximal two-sided ideals corresponding to the composition series of the $R$-bimodule $R/I$. They show that there is an operation $\circ$ on the divisors that makes $\p{}$ a monoid embedding, and study this structure further.
%(Some other results regarding multiplicative two-sided ideal theory in HNP rings are given in \cite[\S 22]{LR} and\cite{Ru2, AM}.)

In this paper, we will develop the missing multiplicative one-sided ideal theory of HNP rings. We aim to generalize both the one-sided ideal theory of Dedekind prime rings and the two-sided ideal theory of HNP rings. Namely, we consider a class of bounded HNP rings, i.e., every essential one-sided ideal contains a nonzero two-sided ideal (e.g.\ every classical hereditary order is bounded). For an essential right ideal $I$ in an HNP ring $R$, the module $R/I$ has finite length \cite[Corollary 12.16]{LR}, and we can therefore consider its image in the Grothendieck group $K_0(\text{fl.\ $R$-mod})$, which is actually, by the Jordan-Hölder theorem, a free $\Z$-module with a basis of all simple $R$-modules; we denote this image by $\p{I}$ and call it a \emph{divisor} of $I$. (Note that, since in a bounded HNP ring simple modules are in a bijective correspondence with maximal two-sided ideals, this is a somewhat natural generalization of Rump and Yang's definition of a divisor.)

Since simple modules of two connected HNP rings $R$ and $S$ are not necessarily in a bijective correspondence (much less in a canonical one, as is the case for Dedekind prime rings), the most natural setting is to consider $\p{}$ as a functor from a category of one-sided ideals into a category of divisors with objects being HNP rings and morphisms from $R$ to $S$ being the set $K_0(\text{fl.\ $R$-mod})$; $\p{}$ fixes objects and to each one-sided ideal assigns its divisor. In this paper, we will show that there is precisely one composition of divisors for which $\p{}$ is a functor from the category of fractional ideals to the category of divisors.

\begin{mainthm} \label{uniquness-mainthm}
	There is precisely one composition of divisors
	$$\circ \colon K_0(\text{fl.\ $R$-mod}) \times K_0(\text{fl.\ $S$-mod}) \rightarrow K_0(\text{fl.\ $S$-mod})$$
	that makes bounded connected HNP rings and divisors between them a category such that $\p{}$ is a functor, the map
	$$ V_R \mapsto V_R \circ D_S - D_S$$
	is additive, and $V_R \circ 0_{R_0} = V_{R_0}$ for $R_0 \subseteq R$.
	%(By Theorem \ref{injectivity-mainthm}, the functor $\p{}$ is faithful.)
\end{mainthm}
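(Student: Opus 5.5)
The plan is to read the composition off the exact sequence of an ideal product, and to get uniqueness from additivity, by testing on simple modules. Fix connected bounded HNP rings $R, T, S$, an integral $(R,T)$-ideal $I$ and an integral $(T,S)$-ideal $J$. Then $IJ\subseteq J\subseteq S$ gives the exact sequence of finite length right $S$-modules
\[
0\to J/IJ\to S/IJ\to S/J\to 0 .
\]
Hence $\p{(IJ)}=\p{J}+[J/IJ]$. Since $J$ is projective, hence flat, as a left $T$-module, we have $J/IJ\cong (T/I)\otimes_T J$. So functoriality of $\p{}$ forces
\[
\p{I}\circ\p{J}=\p{J}+[(T/I)\otimes_T J].
\]
This suggests defining, for $V\in K_0(\text{fl.\ $T$-mod})$ and $D=\p{J}$,
\[
V\circ D:=D+\bigl[V\otimes_T J\bigr].
\]
The right-hand side is additive in $V$ by exactness of $-\otimes_T J$, so $V\mapsto V\circ D-D$ is additive as required.

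\textbf{Uniqueness.} Suppose $\circ$ is any composition with the three properties. By additivity it suffices to determine $[U]\circ D-D$ for each simple $T$-module $U$. Write $U=T/M$ for a maximal right ideal $M$; then $\p{M}=[U]$, and functoriality gives $[U]\circ\p{J}=\p{(MJ)}$. This pins down $[U]\circ D$ whenever $D$ is realised as $\p{J}$ for a $(T,S)$-ideal $J$. The case $D=0$ is covered by the normalisation $V_R\circ 0_{R_0}=V_{R_0}$. The remaining work is to check that every divisor that can occur as the source of a morphism into $S$ is realised in this way. For this I would use boundedness: for every finite length module $X$ there is an essential right ideal $J$ with $S/J$ having the required composition factors, and one can take its left order as $T$. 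Associativity and identities of $\circ$ then follow from associativity of ideal multiplication together with faithfulness of realisation.

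\textbf{Existence, the main obstacle.} The formula above must be well defined: $[V\otimes_T J]$ has to depend only on $\p{J}$ and the rings $T,S$, not on $J$ itself. My approach is to localise. Simple modules of a bounded HNP ring fall into towers, i.e.\ finite cycles $U_1,\dots,U_n$ with $\Ext^1(U_i,U_{i+1})\neq0$. Both $-\otimes_T J$ and the divisor split over the towers, which correspond to the maximal invertible ideals / the localisations at the corresponding semimaximal ideals. Locally the ring is a hereditary order of the form of Examples \ref{example-2} and \ref{ex-3}, with its simples arranged cyclically. There I would compute $U_i\otimes_T J$ directly: it should be a shift of $U_i$ along the cycle, or zero, by an amount determined by the multiplicities of the $U_j$ in $\p{J}$ (a ``rotation by the divisor''). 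This would yield an explicit formula for $\circ$ in terms of $\p{J}$ alone. The delicate points are two. First, for different $J$ with the same divisor the left orders $T$ may differ; the identification of simples between connected rings must then be checked to be compatible with the explicit shift. Second, the tensor product may kill a simple module, which happens exactly when the corresponding piece of $J$ is idempotent-like. I would verify the local formula first for maximal one-sided ideals and then extend it to arbitrary $J$ by induction on the length of $S/J$, using $J=M J'$ factorisations.
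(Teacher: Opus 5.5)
\paragraph{There is a genuine gap.} It sits exactly where you say that ``the remaining work is'' to realise every divisor by an ideal.

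Your formula $V\circ D=D+[V\otimes_T J]$ is correct whenever $D=\p{J}$ for a fractional $(T,S)$-ideal $J$. The paper proves exactly this identity for $(R_0,S)$-ideals in the proof of Theorem~\ref{mult-mainthm}. The problem is that such a $J$ need not exist.

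\begin{itemize}
\item The ring $T$ is not yours to choose: it is the source of $V$. So ``take its left order as $T$'' is not an available move.
\item Realisability fails even if the left order may be arbitrary. Take the hereditary order $R=\begin{bmatrix}\loc&\pp\\ \loc&\loc\end{bmatrix}$ of Example~\ref{example-2}. Using \eqref{eq-ideal} and computing indices of the column lattices $Xe_{11}$ and $Xe_{22}$, one checks that the divisors of right fractional $R$-ideals are exactly $n(V+W)$, $nV+(n-1)W$ and $(n-1)V+nW$ with $n\in\Z$.
\item Hence $2V$ is not $\p{J}$ for any fractional $(T,R)$-ideal $J$, whatever $T$ is.
\item Shrinking the source does not help either. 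Since $\rho(R,V)=\rho(R,W)=1$, $R$ has no basic idealizers, hence no proper finite subrings.
\end{itemize}

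Consequently your definition covers only part of each hom-set, and your uniqueness argument pins down $\circ$ only on that part. The step ``associativity from faithfulness of realisation'' has nothing to stand on. Conversely, the point you single out as the main obstacle is not one: for fixed $T$ and $S$, $J$ is determined by $\p{J}$ by Theorem~\ref{injectivity-mainthm}.

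\paragraph{The missing idea.} The hypotheses themselves let you move to rings where realisability holds.

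\begin{itemize}
\item Additivity forces $0_{R_0}\circ D=D$. With associativity and the normalisation this gives $V\circ D=V_{R_0}\circ D$ for any finite subring $R_0\subseteq R$.
\item Adding functoriality, they also give $D\circ 0_T=D\otimes T$, hence $(V\circ D)\otimes T=V\circ(D\otimes T)$, for any finite overring $T\supseteq S$.
\item If $T$ trivialises the relevant cycles and $R_0$ is a sufficiently fine multichain idealizer, then $D\otimes T=\p{I}$ for some $(R_0,T)$-ideal $I$ (Lemma~\ref{R0-exp}).
\item A divisor is determined by its images in all such $T$ (Lemma~\ref{extensions}).
\end{itemize}

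That is the paper's uniqueness argument. In the example above, $2V\otimes T$ equals $\p{(\pi T)}$ for $T=M_2(\loc)$ and $\p{T}=0$ for $T=tM_2(\loc)t^{-1}$, so $X\circ 2V$ is forced.

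For existence, $V\circ D$ must be defined for non-realisable $D$ by gluing these local values. The real work is showing the result is independent of the choices (Theorem~\ref{circ-def-thm}, Lemma~\ref{same-mult}). This rests on an explicit description of $V\otimes I$ through $\p{I}$ (Theorem~\ref{star-mult}), and that description is not a rotation along a cycle. Over a trivialising overring, a whole cycle of $R$-modules collapses onto one simple module $U$. Each $V_j\otimes I$ is then $U$ or $0$ according to where $\p{I}(U)$ modulo $\rho(S,U)$ falls among the partial sums of the $\rho(R,V_i)$. The unit laws, associativity and additivity are likewise proved via Lemma~\ref{extensions}, not from surjectivity of realisation.
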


Additionally, we will show that the functor $\p{}$ is faithful.

\begin{mainthm} \label{injectivity-mainthm}
	Let $R$ and $S$ be bounded HNP rings. Then,
	$$\p{} \colon \{\text{fractional $(R,S)$-ideals}\} \rightarrow K_0(\text{fl.\ $S$-mod})$$
	is injective.
\end{mainthm}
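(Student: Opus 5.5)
The plan is to reduce the statement to the group of invertible two-sided ideals of $S$. On that group injectivity should be a linear-independence statement in the free $\Z$-module $K_0(\text{fl.\ $S$-mod})$.

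\emph{Step 1: reduce to invertible $S$-ideals.} Fix $R$ and $S$ and two fractional $(R,S)$-ideals $I$ and $J$ with $\p{I}=\p{J}$. In an HNP ring every fractional ideal is invertible relative to its left and right orders. So with $I^{*}=\{q\in Q : qI\subseteq S\}$ we have $II^{*}=R$ and $I^{*}I=S$, and similarly for $J$. Put $A=I^{*}J$. This is a fractional $(S,S)$-ideal with $IA=RJ=J$, and it is invertible with inverse $J^{*}I$. Hence every fractional $(R,S)$-ideal has the form $IA$ for a unique invertible fractional $S$-ideal $A$. It therefore suffices to show two things:
\begin{itemize}
\item[(a)] $\p{(IA)}=\p{I}+\p{A}$ for every invertible $A$;
\item[(b)] $A\mapsto \p{A}$ is injective on the group of invertible fractional $S$-ideals.
\end{itemize}
Then $\p{I}=\p{J}$ forces $\p{A}=0=\p{S}$, hence $A=S$ and $I=J$. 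For fractional ideals, $\p{}$ is defined by clearing denominators with a regular central-type element, or with an invertible ideal, and subtracting. I would check once at the start that this normalization is compatible with (a), which is essentially the same computation.

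\emph{Step 2: part (b).} By the structure theory of bounded HNP rings \cite{LR}, the invertible fractional $S$-ideals form a free abelian group. Its basis consists of the maximal invertible ideals, and each of these is either an invertible maximal ideal $M$ or the intersection $M_1\cap\dots\cap M_k$ of a cycle of idempotent maximal ideals. Such a basis element $X$ satisfies: $S/X$ has composition factors exactly the simple modules attached to the maximal ideals in that cycle, each with multiplicity one. Distinct cycles involve disjoint sets of simples, and in a bounded HNP ring simples correspond bijectively to maximal ideals. So the divisors of the basis elements have pairwise disjoint, nonempty supports in the standard basis of $K_0$, and are therefore linearly independent. Additivity $\p{(AB)}=\p{A}+\p{B}$ for invertible $S$-ideals follows from the exact sequence $0\to A/AB\to S/AB\to S/A\to 0$ together with $A/AB\cong A\otimes_S S/B$, provided we know that $A\otimes_S -$ preserves classes in $K_0$. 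That is the same issue as in (a).

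\emph{Step 3: part (a), the main obstacle.} We have $I/IA\cong I\otimes_S S/A$. So the claim is that tensoring a finite-length module of the form $S/X$, with $X$ a basis invertible ideal, by the progenerator-like bimodule $I$ gives a module whose class is $[S/X]$. Tensoring with $I$ need not preserve individual simples, since $I$ can permute or merge them. What should survive is the full cycle sum. First I would localize at the clique, or tower, containing the cycle of $X$. This reduces to a semilocal bounded HNP ring, which by the known structure is a block upper-triangular hereditary order over a local Dedekind prime ring. Then $S/X$ is the \emph{uniserial} module whose factors run once around the whole cycle. I expect $I\otimes_S S/X\cong I/IX = I/XI'$ (using that $X$ is also an invertible ideal of the relevant orders) to again be a module whose composition factors run once around that same cycle. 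This is the point I would verify by hand in the local matrix model, and it is the step where a counting or length argument is needed. If the direct computation is messy, a fallback is to use $\operatorname{length}(I/IX)=\operatorname{length}(S/X)$ together with the fact that all composition factors of $I/IX$ are annihilated by $X$. Since $X$ is contained only in the maximal ideals of that cycle, this pins the factors down to the cycle, and a symmetry argument via the inverse $I^{*}$ should force each simple to appear exactly once.
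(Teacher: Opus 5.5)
Your Step~1 rests on a false identity, and the reduction built on it discards exactly the ideals where the theorem has content.

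A fractional $(R,S)$-ideal $I$ in an HNP ring is invertible only relative to its own orders: $II^{-1}=O_l(I)$ and $I^{-1}I=O_r(I)$. These finite overrings of $R$ and $S$ are in general strictly larger, so $II^{*}=R$ and $I^{*}I=S$ do not follow. With $I^{*}=\{q\in Q : qI\subseteq S\}$, the ideal $I^{*}I$ is the trace ideal of $I_S$. For a nonzero proper idempotent ideal $X$ of $S$, $X^{*}X=S$ is impossible, since it would give $S=X^{*}X^{2}=SX=X$.

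Concretely, let $R=S$ be the order of Example~\ref{example-2}. Its maximal ideals $M_1=\left[\begin{smallmatrix}\mathfrak p&\mathfrak p\\ \Delta&\Delta\end{smallmatrix}\right]$ and $M_2=\left[\begin{smallmatrix}\Delta&\mathfrak p\\ \Delta&\mathfrak p\end{smallmatrix}\right]$ are both idempotent; put $V=S/M_1$ and $W=S/M_2$. The invertible ideals are the integer powers of $X=M_1\cap M_2$, yet $M_1$, $M_2$, $M_1M_2$, $M_2M_1,\ldots$ are all fractional $(S,S)$-ideals. Your construction with $I=S$, $J=M_1$ returns $A=M_1$, which is not invertible. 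In general, $J=IA$ with $A$ invertible forces $O_l(J)=O_l(I)$. So the fractional $(R,S)$-ideals are not a coset of the group of invertible $S$-ideals.

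Claim~(a) fails as well. Since $M_1$ is idempotent, $M_1X=M_1M_2$, and $\partial(M_1M_2)=V+2W$, whereas $\partial M_1+\partial X=V+(V+W)=2V+W$. Here $M_1/M_1X\cong W^{2}$ does not run ``once around the cycle''. The reason is that $\partial M_1=V$ is not a full cycle sum, while tensoring with the invertible $X$ interchanges $V$ and $W$ (Lemma~\ref{inv-ideal} preserves only the cycle, not the individual simples). So the statement Step~3 aims at is false for non-invertible $I$, and no length or symmetry argument can prove it.

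A minor point: for a maximal invertible $X$, the multiplicity of $U_i$ in $[S/X]$ is $\rho(S,U_i)$, not $1$ (cf.\ Example~\ref{ex-3}). This does not affect linear independence.

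At best your plan recovers injectivity on invertible ideals, which is the Dedekind-like part. It cannot distinguish, say, $M_1M_2$ from $M_2M_1$, whose divisors are $V+2W$ and $2V+W$.

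The missing idea is a way to handle ideals whose orders exceed $R$ and $S$ without factoring through invertible ideals. The paper works directly with $I$ and $J$:
\begin{enumerate}
\item Suppose $\partial I=\partial J$ but $I\subsetneq I+J$. Pick a minimal step $I\subseteq K\subset I+J$ of $(R,S)$-ideals; it is $U$-isotypic by Lemma~\ref{min-inc}.
\item Pass to a finite overring $T\supseteq S$ that kills the rest of the cycle of $U$. There $\partial(IT)=\partial(JT)$ still holds by Proposition~\ref{mult-tens}. Hence some minimal step $L_{i-1}\subset L_i$ of $(R,T)$-ideals between $JT$ and $(I+J)T$ is $(U\otimes T)$-isotypic.
\item Multiplying by $L_i^{-1}$ gives right ideals $(KT\cap L_i)L_i^{-1}$ and $L_{i-1}L_i^{-1}$ of $O_l(L_i)$. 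They are generative, isomaximal of the same type, incomparable (they sum to $O_l(L_i)$), and both idealized by $R$.
\item This contradicts the fact that $R$ is a multichain idealizer in $O_l(L_i)$ \cite[Corollary 8.18]{LR}.
\end{enumerate}
It is this idealizer structure of $R$ inside the larger orders $O_l(L_i)$, rather than the group of invertible ideals, that turns equality of divisors into equality of ideals.
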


Furthermore, we will describe $\circ$ in Theorem \ref{general-mainthm}, and show that any divisor, even if it is not of the form $\p{I}$, is still determined by a collection of ideals in Theorem \ref{mult-mainthm}.

%We  show that there is precisely one composition of divisors for which $\p{}$ is a functor from the category of fractional ideals to the category of divisors (Theorem \ref{uniquness-mainthm}), and that $\p{}$ is faithful (Theorem \ref{injectivity-mainthm}). We further describe this operation (Theorems \ref{overring-mainthm} and \ref{general-mainthm}), and show that any divisor, even if it is not of the form $\p{I}$, is still determined by a collection of ideals (Theorem \ref{mult-mainthm}).
%
%Consider the following
%\begin{align*}
%	\text{objects} &= \{ T \subseteq Q \mid T \, \text{is a bounded HNP ring}\} \\
%	{\rm Hom} \, (S, R) &= \{ \fracideal{R}{I}{S} \subseteq Q \mid I \, \text{is a fractional (R,S)-ideal} \}.
%\end{align*}

%%%%%%%%%%%%%%%%%%%%%%%%%%%%%%%%%%%%%%%%%%%%%%%
\section{Preliminaries}

A (unital) ring $R$ is called an HNP ring if it is hereditary (i.e., both
left and right ideals are projective), noetherian (i.e.,
both left and right ideals are finitely generated), and prime (i.e., a product of nonzero ideals is nonzero). An HNP ring $R$ has a simple artinian ring of quotients $Q$. Throughout this paper, let $Q$ be fixed; any HNP ring $S$ considered is assumed to be a subring of $Q$ with $Q$ being its ring of quotients, i.e., every $q \in Q$ can be written as
$$ q = r s^{-1}$$
for some $r, s \in S$ with $s$ not being a zero-divisor.

An HNP ring $R$ is \emph{right (resp.\ left) bounded} if every essential right (resp.\ left) ideal contains a nonzero two-sided ideal. \cite[Proposition 23.4]{LR} shows that a ring is right bounded if and only if it is left bounded, so we will just write \emph{bounded}.
Every simple module in a bounded HNP ring $R$ is unfaithful, i.e., it has a nonzero annihilator (which is necessarily a maximal two-sided ideal by \cite[Lemma 12.14]{LR}). All classical hereditary orders are clearly bounded HNP rings.

We say that a simple module $V_R$ is the \emph{predecessor} of a simple module $W_R$ (and that $W$ is the \emph{successor} of $V$) if
$$ \Ext^1(V,W) \neq 0,$$
and denote
$$ V \rightarrow W.$$
If $R$ is bounded, for $\MMM = \ann V$ and $\NNN= \ann W$, by \cite[Proposition 15.7]{LR}, $V$ being the predecessor of $W$ is equivalent to
$$ (\MMM \cap \NNN) / \MMM \NNN \neq 0 .$$
Each simple module of a bounded HNP ring has a unique predecessor and a unique successor, and this relation splits simple modules of $R$ into (finite) cycles \cite[Proposition 23.4, Theorem 19.3]{LR}.
We will say that a cycle of simple $R$-modules is \emph{trivial} if it contains only a single simple module (which is then necessarily its own successor and predecessor).

We say that an overring $S \supseteq R$ \emph{kills} the simple $R$-module $V_R$ if
$$ V \otimes_R S = 0 .$$
By \cite[Theorem 13.12]{LR}, if $S$ does not kill $V$, then $V \otimes S$ is a simple $S$-module, and all simple $S$-modules are of this form.
Overrings of $R$ are precisely determined by the simple $R$-modules they kill, i.e., for a set of simple $R$-modules $\V$ there is a unique overring of $R$, denoted by $R(\V)$, that kills precisely the simple $R$-modules in $\V$, and no other simple $R$-module, and any overring of $R$ is equal to some $R(\V)$ \cite[Theorem 13.8]{LR}. We will say that an overring $S \supseteq R$ \emph{trivializes} a cycle of $R$-modules if it kills all but one module in that cycle; and say that it \emph{kills} a cycle if it kills all the modules in that cycle.

We say that another HNP ring $S$ is \emph{right (resp.\ left) finite} over $R$ if $S \supseteq R$ and $S/R$ is finitely generated as a right (resp.\ left) $R$-module.
By \cite[Corollary 27.10]{LR}, an overring is right finite if and only if it does not kill any cycle of simple $R$-modules.
Considering this together with \cite[Theorem 27.9, Corollary 27.10]{LR}, we see that an overring $S$ of a bounded HNP ring $R$ is right finite if and only if it is left finite (and we will just write \emph{finite} instead of right finite for bounded HNP rings).

Any finite subring $R$ of an HNP ring $S$ can be obtained from $S$ using the following construction; for a right ideal $A$ of the ring $S$, we define the \emph{idealizer} as
$$  R = \I_S(A) = \{ x\in S \mid xA \subseteq A\} .$$
We say that the ideal $A$ is \emph{isomaximal} (of type $U$) if $S/A \cong U^{(n)}$ for a simple $S$-module $U$ and $n \geq 1$, and we say that $A$ is \emph{generative} if $S A = A$.

\begin{proposition}
	Let $S$ be a bounded HNP ring and let $A$ be a right $S$-ideal. Then, $A$ is isomaximal and generative if and only if it is a proper right ideal and it strictly contains a maximal two-sided ideal.
\end{proposition}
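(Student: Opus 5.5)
We prove the two implications separately; the key tool is the fact recalled above that in a bounded HNP ring every simple module $U$ has a nonzero annihilator $\MMM = \ann U$, which is a maximal two-sided ideal.

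\emph{($\Rightarrow$).} Let $A$ be isomaximal of type $U$ and generative, and put $\MMM = \ann U$. Since $n \geq 1$, $A$ is proper. As $S/A \cong U^{(n)}$ is annihilated by $\MMM$, we get $\MMM \subseteq A$. It remains to see $A \neq \MMM$. If $A = \MMM$, then $A$ is a two-sided ideal, so $SA = A$ holds trivially and gives no contradiction; generativity alone therefore does not exclude this case. So the intended meaning of ``strictly contains'' must be checked against the source. Either the notion of isomaximal in \cite{LR} requires $A$ not to be two-sided, or the intended statement is $\MMM \subseteq A$. I would settle this convention first and then argue accordingly. Under the convention that $A$ is not two-sided, the containment $\MMM \subsetneq A$ is immediate.

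\emph{($\Leftarrow$).} Suppose $\MMM \subseteq A \subsetneq S$ with $\MMM$ maximal two-sided. We check the two properties in turn.

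\emph{Generativity.} $SA$ is a two-sided ideal containing $\MMM$. By maximality of $\MMM$, either $SA = \MMM$ or $SA = S$.
\begin{itemize}
\item If $SA = \MMM$, then $A \subseteq \MMM$, so $A = \MMM$, which contradicts strict containment.
\item If $SA = S$, we need to compare $A$ with $SA$, and this is not automatic.
\end{itemize}
The proper argument is likely the following. The ring $S/\MMM$ is simple artinian, because $S$ is bounded HNP, so $\MMM$ has finite codimension in the sense that $S/\MMM$ has finite length. Hence $A/\MMM$ is a right ideal of a simple artinian ring, so it equals $e(S/\MMM)$ for an idempotent $e$. Then $S A$ maps onto $(S/\MMM)\, e\, (S/\MMM) = S/\MMM$ whenever $e \neq 0$, which gives $SA = S \neq A$. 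So generativity in the literal sense $SA = A$ fails. This tells us that ``generative'' must mean $AS^{*}$-type generation in \cite{LR}, namely $A\, S \cdot$ (dual) $= S$, that is, $SA = S$. With that meaning, $A \supsetneq \MMM$ gives $e \neq 0$, hence $SA = S$, and $A = \MMM$ would give $SA = \MMM \neq S$. This also repairs the ($\Rightarrow$) direction: generative means $SA = S$, which rules out $A = \MMM$, and so $\MMM \subsetneq A$.

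\emph{Isomaximality.} The module $S/A$ is a right module over the simple artinian ring $S/\MMM$. Hence it is semisimple and isotypic, of the form $U^{(n)}$ where $U$ is the unique simple $S/\MMM$-module. Since $A$ is proper, $n \geq 1$.

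\emph{Main obstacle.} The main difficulty is pinning down the precise convention for ``generative'', which I read as $SA = S$ following \cite{LR}. Once that is fixed, both directions reduce to the structure of right ideals in the simple artinian ring $S/\MMM$.
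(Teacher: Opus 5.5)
Your argument is correct and, once ``generative'' is read as $SA = S$ (the Levy--Robson convention, and the one the paper's proof actually uses; the displayed definition $SA = A$ is a typo, as your analysis shows), it matches the paper's proof: $\MMM = \ann U \subseteq A$ because $(S/A)\MMM = 0$, with strictness because $S\MMM = \MMM \neq S$, and conversely $SA$ is a two-sided ideal strictly containing $\MMM$, hence equals $S$, while $S/A$ is a module over the simple artinian ring $S/\MMM$ and so is isomorphic to $U^{(n)}$. The idempotent detour is unnecessary, since the dichotomy $SA \in \{\MMM, S\}$ together with $A \subseteq SA$ already forces $SA = S$.
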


\begin{proof}
If $A$ is isomaximal of type $U$, and $\MMM= \ann U$, then $(S/A) \, \MMM = 0$, i.e., $\MMM \subseteq A$, and if $A$ is in addition generative, then the inclusion must clearly be strict.

Conversely, if $\MMM \subset A$ for some maximal ideal $\MMM \lhd S$, then $A$ is clearly generative since $SA$ is a two-sided ideal strictly containing $\MMM$. The ideal $A$ is also isomaximal since $S/A$ is annihilated by $\MMM$, and therefore has natural $S/\MMM$-module structure. The ring $S/ \MMM$ is simple artinian by \cite[Lemma 12.14]{LR}, and therefore $S/A \cong U^{(n)}$ for the unique simple $S$-module $U$ annihilated by $\MMM$.
\end{proof}

If $A$ is an isomaximal generative ideal of the ring $S$, we call the ring $R = \I_S(A)$ a \emph{basic idealizer}. By \cite[Theorem 4.19]{LR}, if $S$ is an HNP ring, a basic idealizer $R$ is also an HNP ring, and the extension $S \supseteq R$ is finite. Of course, this is still true if we iterate this process (we take a basic idealizer from $R$, and so on); we call the ring obtained in this way an \emph{iterated basic idealizer}.

There is also a related notion; given chains of isomaximal generative ideals
$$ \Acal_i = \{ A_1 \subset A_2 \subset \dots \subset A_n\}$$
of distinct types $U_i$, and denoting $\Acal = \{\Acal_1, \ldots, \Acal_t \}$, the ring
$$\I_S(\Acal)  = \cap \{ \I_S(A) \mid A \in \Acal_i \,\, \text{for some $i$} \}$$
is called a \emph{multichain idealizer}.

\begin{remark} \label{remark}
	By \cite[Corollary 14.8]{LR}, $S$ is a finite overring of $R$ if and only if $R$ is an iterated  basic idealizer from $S$. Additionally, \cite[Theorem 8.6]{LR} shows that iterated basic idealizers are the same as multichain idealizers, so all of these three notions coincide.
\end{remark}

We say that a right $R$-module $\fracideal{}{I}{R} \subseteq Q$ is a \emph{right fractional $R$-ideal} if there is an $a \in Q$ that is not a zero-divisor such that
$$ a I \subseteq R ,$$
and $I$ contains some element that is not a zero-divisor. Denote by $\Ical(R)$ the set of all fractional $R$-ideals. Similarly, we define left fractional ideals, and say that an $(R,S)$-bimodule $\fracideal{R}{I}{S} \subseteq Q$ is a \emph{fractional $(R,S)$-ideal} if it is both a fractional right $S$-ideal and a fractional left $R$-ideal. Denote by $ \Ical_R(S)$ the set of all fractional $(R,S)$-ideals.
Note that since $R$ and $S$ are hereditary, the fractional ideal $\fracideal{R}{I}{S}$ is flat as both the left $R$-module and the right $S$-module.
This implies that for a right fractional $R$-ideal $\fracideal{}{J}{R}$ , we have an isomorphism of right $S$-modules
$$ J \otimes I \cong JI .$$
If $\fracideal{}{K}{R}$ is another right fractional $R$-ideal, we have
$$ (J \cap K) I = JI \cap KI ,$$
and if additionally $K \leq J$, then also
$$ J / K \otimes I \cong JI / KI .$$
Here, also note that by \cite[Lemma 12.14]{LR}, $J/K$ has finite length.
We will use all of these properties throughout this paper without further reference.

Given a fractional ideal $\fracideal{R}{I}{S}$, its \emph{left order} and \emph{right order} are defined as
$$ O_l(I) = \{ x \in Q \mid x I \subseteq I \} \quad \text{and} \quad O_r(I) = \{ x \in Q \mid Ix  \subseteq I \}.$$
The left (resp.\ right) order is a finite overring of $R$ (resp.\ $S$). We will also denote
\begin{align*}
	I^{-1} = \{ x \in Q \mid I x I \subseteq I \},
\end{align*}
which is a fractional $(S,R)$-ideal with
$$ I I^{-1} = O_l(I) \quad \text{and} \quad  I^{-1} I = O_r(I) .$$
We say that $I$ is \emph{invertible} if $O_l(I) = R$ and $O_r(I) = S$.

We say that HNP rings $R$ and $S$ are \emph{connected} if there is a fractional $(R,S)$-ideal.

\begin{proposition}
	Let $R$ and $S$ be connected HNP rings. Then, the ring $R$ is bounded if and only if $S$ is bounded. In particular, this is true for any left finite overring $S \supseteq R$.
\end{proposition}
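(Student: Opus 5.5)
By symmetry of the notion of connectedness, it suffices to show that if $R$ is bounded then $S$ is bounded. Fix a fractional $(R,S)$-ideal $I$. The plan is to transport a nonzero two-sided ideal of $R$ into an arbitrary essential right ideal of $S$ by multiplying with $I$ and with a suitable fractional ideal in the other direction. The final statement about overrings is then immediate: if $S \supseteq R$ is left finite, $S$ is a finitely generated left $R$-module containing $1$ and lying in $Q$. Clearing denominators of finitely many generators gives a nonzero-divisor $a$ with $Sa \subseteq R$, so $S$ itself is a fractional $(S,R)$-ideal, and $R$ and $S$ are connected.

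Let $B$ be an essential right ideal of $S$, and set $B' = I B I^{-1}$. This is a right $R$-submodule of $I S I^{-1} \subseteq O_l(I)$, which is a finite overring of $R$ and hence a fractional $R$-ideal. So there is a nonzero-divisor $c \in R$ with $cO_l(I) \subseteq R$, and $cB' \subseteq R$ is a right ideal of $R$.

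I claim that $cB'$ is essential. Since $B$ is essential in the right Ore ring $S$, it contains a nonzero-divisor $b$ (this holds for semiprime right Goldie rings). Since $I^{-1}$ contains a nonzero-divisor $d$ and $I$ contains a nonzero-divisor $e$, the element $ceb d$ is a regular element of $cB'$. A right ideal of $R$ containing a regular element is essential, as $R$ is prime Goldie.

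Boundedness of $R$ now gives a nonzero two-sided ideal $T \subseteq cB' \subseteq B'$.

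The last step is to pull $T$ back into $B$. Consider $T' = I^{-1} T I$. It is an $(S,S)$-subbimodule of $Q$, and it is nonzero since $I^{-1}$ and $I$ contain nonzero-divisors. It lies in $I^{-1} I B I^{-1} I = O_r(I) B O_r(I)$. This is where I expect the main obstacle: $O_r(I)$ may strictly contain $S$, so this bound need not lie inside $B$. To handle it, choose a nonzero two-sided $S$-ideal $X \subseteq S$ with $X O_r(I) \subseteq S$ and $O_r(I) X \subseteq S$; for instance, take $X$ to be the intersection of the left and right conductors of $O_r(I)$ in $S$, which is nonzero since $O_r(I)$ is finite over $S$ and $S$ is prime. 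Then
\[ X T' X \subseteq X O_r(I) B O_r(I) X \subseteq S B S. \]
This bound is still not small enough, because $SBS$ need not lie in $B$.

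To fix this, replace $B$ from the start by the right ideal $B_0 = \{ s \in S \mid O_r(I) X s \subseteq B \}$. This is still essential, as an intersection-type conductor of an essential right ideal by a finitely generated module. Running the argument with $B_0$ in place of $B$ yields
\[ X T' X \subseteq X O_r(I) B_0 \subseteq B, \]
a nonzero two-sided $S$-ideal inside $B$ (nonzero by primeness of $S$). The essentiality of $B_0$ is the delicate point, and I would check it using that $O_r(I)X$ is a finitely generated right $S$-module inside $S$.
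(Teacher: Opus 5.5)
Your argument has a genuine gap: it breaks at the first transport step, and the proposed repair is circular. Since $IS = I$, you have $IB = I(SB)$. So $B' = IBI^{-1} = I(SB)I^{-1}$ depends on $B$ only through the two-sided ideal $SB$. For example, if $B$ is an essential maximal right ideal that is not two-sided, then $SB = S$ and $B' = O_l(I)$, which carries no information about $B$. Hence $I^{-1}TI$ can only be bounded by $O_r(I)\,SB\,O_r(I)$, and no conductor sandwich can push it into $B$.

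The replacement by $B_0$ does not help. Since $1 \in O_r(I)$, we have $B_0 \subseteq \{s \in S : Xs \subseteq B\}$. For any $s$ in this set, $XsS$ is a two-sided ideal of $S$ (as $SX = X$) contained in $B$, and it is nonzero whenever $s \neq 0$, by primeness. So $B_0 \neq 0$, let alone essential, already says that $B$ contains a nonzero two-sided ideal, which is exactly what you are proving. Your justification via a conductor by a finitely generated module does not apply, because $B$ is not a left $S$-module. The condition $O_r(I)Xs \subseteq B$ does not reduce to finitely many conditions $y_i s \in B$; when $O_r(I)X = S$, $B_0$ is precisely the largest two-sided ideal inside $B$. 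There is also a side mismatch: your last inclusion uses $XO_r(I)B_0$, while $B_0$ was defined through $O_r(I)X$.

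The missing idea is to multiply the one-sided ideal by the fractional ideal on one side only, so it stays one-sided, and to use boundedness through annihilators of finite-length modules. This is the paper's argument, in the direction $S$ bounded $\Rightarrow$ $R$ bounded. For an essential right ideal $J$ of $O_l(I)$, the right $S$-module $I/JI$ has finite length, so $\AAA = \ann(I/JI)$ is a nonzero ideal of $S$. Then $I\AAA \subseteq JI$ gives $I\AAA I^{-1} \subseteq J$, so $O_l(I)$ is bounded. The paper then descends to the finite subring $R$ via cyclic towers \cite[Proposition 23.4, Theorem 20.2]{LR}.

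In your direction the same idea runs as follows. The module $I^{-1}/BI^{-1} \cong S/B \otimes_S I^{-1}$ has finite length over $R$, so its annihilator $\AAA \lhd R$ is nonzero. This gives $I^{-1}\AAA I \subseteq BI^{-1}I = BO_r(I)$. At this point your conductor idea does work. Take $X = \{x \in S : O_r(I)x \subseteq S\}$; it is a nonzero ideal of $S$, because $O_r(I) = I^{-1}I$ admits a regular $h$ with $O_r(I)h \subseteq S$. Then $I^{-1}\AAA I X$ is a nonzero $(S,S)$-subbimodule of $BO_r(I)X \subseteq B$. This would even replace the paper's appeal to cyclic tower theory for the descent with an elementary step.
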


\begin{proof}
	Assume that $S$ is bounded. Let $\fracideal{R}{I}{S}$ be a fractional $(R,S)$-ideal. First, we will show that $T= O_r(I)$ is bounded. Let $J_{T} \subseteq T$ be an essential ideal. The $S$-module
	$$ M_S = I/JI$$
	has finite length. Since $S$ is bounded, all simple $S$-modules have nonzero (and thus essential) annihilators. Therefore,
	$$ \AAA = \ann M_S$$
	is a nonzero two-sided ideal in $S$. If we multiply the inclusion
	$$ I \AAA \subseteq JI$$
	from the right by $I^{-1}$, we obtain
	$$ I \AAA I^{-1} \subseteq J T= J .$$
	Since $I \AAA I^{-1}$ is a nonzero two-sided ideal in $T$, this shows that $T$ is bounded. By \cite[Proposition 23.4]{LR}, $T$ has only cyclic towers, and therefore, since $T  \supseteq R$ is a finite overring, so does $R$ \cite[Theorem 20.2]{LR} (considering Remark \ref{remark}). Hence, $R$ is bounded. The converse follows by a similar argument.
\end{proof}

For fractional ideals $I \in \Ical_R(S)$ and $J \in \Ical_S(T)$, we have
$$ I J \in \Ical_R(T). $$
Indeed, we have elements $a, b, c, d \in Q$ that are not zero-divisors such that
$$ a I \subseteq S, \,\,  I b \subseteq R, \,\,  cJ \subseteq T, \,\, \text{and} \,\,  Jd \subseteq S.$$
Therefore,
$$ (c a) IJ \subseteq T \quad \text{and} \quad IJ (db) \subseteq R ,$$
and since $IJ$ also contains an element that is not a zero-divisor, this shows that it is an $(R,T)$-fractional ideal. This shows that we may consider a category with objects being HNP rings and morphisms fractional ideals between them. (This is in fact a small category, and should be understood as a generalization of a monoid.)

\begin{example} \label{example-2}
	Let $(\loc, \pp)$ be a local PID. The ring
	$$ R =  \I_{M_2(\loc)} \Big(\begin{bmatrix} \pp & \pp \\ \loc & \loc \end{bmatrix} \Big) = \begin{bmatrix} \loc & \pp \\ \loc & \loc \end{bmatrix} $$
	is an HNP ring, and in fact, a classical hereditary order. There are exactly two simple $R$-modules $V$ and $W$, successors to each other, with
	$$\rho(R, V) = \rho(R,W) = 1 .$$
	One can check that right fractional $R$-ideals are of the form
	\begin{align} \label{eq-ideal}
		a R, \,\, a M_2(\loc), \,\, \text{and} \,\,  a t M_2(\loc)t^{-1}
	\end{align}
	for an $a \in Q$ that is not a zero-divisor, and a matrix $t = \diag(\pi, 1)$, where $\pi$ generates $\pp$.
	
	Now, for an $a \in R$, consider a factorization in the category of one sided ideals
	\begin{align} \label{factorization}
		aR = I J ,
	\end{align}
	for some HNP ring $S$, and \emph{integral} ideals $\fracideal{aRa^{-1}}{I}{S}$ and $\fracideal{S}{J}{R}$, i.e., $I \subseteq aRa^{-1} \cap S$ and $J \subseteq S \cap R$. Since $J$ is a right fractional $R$-ideal, it has one of the three forms described in \eqref{eq-ideal}; however, since the right order of $aR$ is $R$, $J$ must be of the form $b R$ for some $b \in Q$. Therefore $S$ must be contained in $O_l(bR) = bRb^{-1}$, but the only such HNP ring (for which the inclusion is finite) is $bRb^{-1}$ itself. Similarly, we conclude that $I$ is of the form $b c R b^{-1}$ for some $c \in Q$. In fact, since we require both $I$ and $J$ to be integral, we have $b, c \in R$. Combining this with the equality \eqref{factorization}, we see that
	$$a = b c u$$
	for some unit $u \in R^\times$.
	Factoring $aR$ in the category of one-sided integral ideals is therefore equivalent to factoring $a$ in $R$.
%	
%	On the other hand, consider the matrix
%	$$ t = \begin{bmatrix} \pi & 0 \\ 0 & 1 \end{bmatrix} \in M_2(\loc).$$
%	Clearly, the matrix $t$ does not have a nontrivial factorization in $M_2(\loc)$, yet
%	$$ t M_2(\loc) = \big( t s R t^{-1}\big) \big( t M_2(\loc) \big)$$
%	for $s = \diag(1, \pi)$.
\end{example}

For an HNP ring $R$ with the set of all simple $R$-modules $\M_R$, we write
$$ \Div(R) = \bigoplus_{U \in \M_R} \Z \cdot U$$
for the formal integer combinations of simple modules. We call a
$$D = \sum_{U \in \M_R} k_U \cdot U \in \Div(R)$$
for some $k_U \in \Z$ (with only finitely many nonzero) a \emph{divisor}, and we will often write $D(U)$ for $k_U$.
%
%If $S$ is a right finite overring of $R$, then for every simple $R$-module $U_R$, the tensor product
%$$ U_{S} = U \otimes_R S$$
%is either zero or a simple $S$-module (LR Thm 13.12). For $D$ as above, it makes sense to write
%$$ D \otimes_R S = \sum_{\substack{U \in \M_R \\ U \otimes S \neq 0}} k_U \cdot U_S \text{.}$$

By the Jordan-Hölder theorem, the Grothendieck group of the category of finite length $R$-modules is isomorphic to $\Div(R)$, i.e.,
$$K_0(\text{fl.\ $R$-mod}) \cong \Div(R) \text{.}$$
For an arbitrary finite length module $M_R$, let $\JH{M} \in \Div(R)$ denote the image of $M$ via this isomorphism. If simple $R$-modules $U_1$, \ldots , $U_n$ appear in the Jordan-Hölder decomposition of $M$ with multiplicities $k_1$, \ldots , $k_n$, then
$$ \JH{M} = \sum_{i=1}^n k_i \cdot U_i \text{.}$$

For any fractional ideal $\fracideal{R}{I}{S}$,
we have
$$ \JH{M\otimes I} = \sum_{i=1}^n k_i \cdot \JH{U_i \otimes I},$$
and we will therefore write $ \JH{M} \otimes I$ for $\JH{M \otimes I}$, and note that this gives us an additive map from $\Div(R)$ to $\Div(S)$.
Indeed, this is because $\fracideal{R}{I}{}$ is flat, and a chain of modules
$$0 \subset M_1 \subset \dots \subset M_{n-1} \subset M_n = M$$
induces a chain
$$ 0 \subset M_1 \otimes I \subset \dots \subset M_{n-1} \otimes I \subset M_n \otimes I ,$$
and we have an isomorphism of $S$-modules
$$ (M_i \otimes I) / (M_{i-1} \otimes I) \cong ( M_i / M_{i-1}) \otimes I .$$

For $I_S \in \Ical(S)$ and any fractional ideal $J_S \subseteq I \cap S$, we set
$$ \p{I_S} = \JH{S/J} - \JH{I/J} \text{.}$$
This definition does not depend on the $J$ chosen. Indeed, let $K_S \subseteq I \cap S$ be another fractional ideal. By the Jordan-Hölder theorem applied to the diagram
\begin{figure}[!htbp]
	\begin{tikzpicture}[scale=0.7]
		\node (ul) at (-2,2) {$S$};
		\node (ur) at (2,2) {$I$};
		\node (m) at (0,0) {$J+K$};
		\node (bl) at (-2,-2) {$J$};
		\node (br) at (2,-2) {$K$};
		%	\node (zero) at (0,-2) {$0$};
		\draw (br) -- (m) -- (ur);
		\draw (bl) -- (m) -- (ul);
	\end{tikzpicture}
\end{figure}

\noindent we see that
$$ \JH{S/J} + \JH{I/K} = \JH{I/J} + \JH{S/K} \text{,}$$
which shows
$$ \p{I} = \JH{S/J} - \JH{I/J} = \JH{S/K} - \JH{I/K} \text{.}$$
Therefore, we have a well-defined map
$$ \p{} \colon \Ical(S) \rightarrow \Div(S) ,$$
which to each ideal assigns a divisor. Theorem \ref{injectivity-mainthm}, which we will prove in Section \ref{sec-inj}, shows that $\p{}$ restricted to $\Ical_R(S)$ is injective.

What we aim to define is an operation $\circ$ that generalizes ideal multiplication, i.e.,
$$ \p{J_R} \circ \p{I_S} = \p{JI_S}$$
for $J \in \Ical(R)$ and $I \in \Ical_R(S)$.
In other words, $\p{}$ is a functor between the category of fractional ideals (with natural multiplication) and the category of divisors (with the operation $\circ$) -- the objects in both categories are just HNP rings and are preserved by $\p{}$.

\begin{proposition}  \label{mult-tens}	
	For fractional ideals $J_R$ and $\fracideal{R}{I}{S}$, we have
	\begin{align*}
		\p{JI_S} = \p{J_R} \otimes I + \p{I_S} .
	\end{align*} 
\end{proposition}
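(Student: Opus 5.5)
We unwind the definition of $\p{}$ by choosing auxiliary integral ideals compatibly on both sides. Pick a fractional right $R$-ideal $K_R \subseteq J \cap R$, say $K = cR$ for a suitable non-zero-divisor $c \in R$ with $cR \subseteq J$; such $c$ exists since $J$ contains a non-zero-divisor and a common right multiple can be arranged. Then
$$ \p{J_R} = \JH{R/K} - \JH{J/K}. $$
Apply the exact functor $-\otimes_R I$, which is available because $\fracideal{R}{I}{}$ is flat. This gives
$$ \JH{R/K}\otimes I = \JH{I/KI}, \qquad \JH{J/K} \otimes I = \JH{JI/KI}, $$
using the identification $(J/K)\otimes I \cong JI/KI$ from the preliminaries. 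Hence
$$ \p{J_R}\otimes I = \JH{I/KI} - \JH{JI/KI}. $$

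Next we compute $\p{I_S}$ and $\p{JI_S}$ using a common ideal below both. Choose a fractional right $S$-ideal $L \subseteq KI \cap S$; for instance take $L = KI \cap S$, or $L = d S$ for a non-zero-divisor $d$ with $dS \subseteq KI\cap S$. Note that $KI \subseteq JI$ and $KI \subseteq I$, so $L \subseteq I \cap S$ and $L \subseteq JI \cap S$. By the independence of $\p{}$ from the chosen subideal,
$$ \p{I_S} = \JH{S/L} - \JH{I/L}, \qquad \p{JI_S} = \JH{S/L} - \JH{JI/L}. $$
Additivity of $\JH{}$ along the chains $L \subseteq KI \subseteq I$ and $L\subseteq KI\subseteq JI$ yields
$$ \JH{I/L} = \JH{I/KI} + \JH{KI/L}, \qquad \JH{JI/L} = \JH{JI/KI} + \JH{KI/L}. $$
Substituting these gives
$$ \p{JI_S} - \p{I_S} = \JH{I/L} - \JH{JI/L} = \JH{I/KI} - \JH{JI/KI} = \p{J_R}\otimes I, $$
which is the claim.

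The only point needing care is the existence of the auxiliary ideals $K$ and $L$. For $K$ we need a fractional right $R$-ideal inside $J\cap R$; $J\cap R$ itself works, since it contains a non-zero-divisor: if $aJ\subseteq R$ fails to help directly, take $s\in J$ regular and clear denominators via the Ore condition. For $L$ we need a fractional right $S$-ideal inside $KI\cap S$, and the same argument applies. Finite length of all quotients used follows from \cite[Lemma 12.14]{LR}, as noted in the preliminaries. We expect no real obstacle beyond this bookkeeping.
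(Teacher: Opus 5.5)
Your proof is correct and is essentially the paper's argument. The paper takes $K = J \cap R$ and $L = S \cap KI$, tensors $\p{J} = \JH{R/K} - \JH{J/K}$ with $I$, and compares the result with $\p{JI} - \p{I} = \JH{I/L} - \JH{JI/L}$; the only difference is that you write out the additivity along $L \subseteq KI \subseteq I$ and $L \subseteq KI \subseteq JI$, which the paper uses implicitly.
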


\begin{proof}
Let $K = J \cap R$ and $L = S \cap  KI$. Then, by definition
$$ \p{J} = \JH{R/K} - \JH{J/K} ,$$
and by flatness of $I$, we have
\begin{align*}
\p{J} \otimes I &= \JH{R/K} \otimes I - \JH{J/K} \otimes I \\
&= \JH{I/KI} - \JH{JI/KI} \\
&= \JH{I/L} - \JH{JI/L}.
\end{align*}
On the other hand,
\begin{align*}
\p{JI_S} - \p{I_S} &= \JH{S/L} - \JH{JI/L} - (\JH{S/L} - \JH{I/L} ) \\
&= \JH{I/L} - \JH{JI/L} ,
\end{align*}
which concludes the proof.
\end{proof}

Considering Proposition \ref{mult-tens}, it makes sense to expect from $\circ$ that for any $D \in \Div(S)$, the map from $\Div(R)$ to $\Div(S)$, given by
$$ E \mapsto E \circ D - D \text{,}$$
is additive. Additionally, since $\otimes_R \cong \otimes_{R_0}$ for every finite subring $R_0 \subseteq R$, we also expect $$V_R \circ 0_{R_0} = V_{R_0} $$
for every $V \in \Div(R)$. Theorem \ref{uniquness-mainthm}, which we will prove in Section \ref{sec-gen} (existence) and Section \ref{sec-cat} (uniqueness), states that there is precisely one composition of divisors $\circ$ such that divisors form a category, $\p{}$ is a functor, and the above two requirements are satisfied.

%Motivate
%$V_R \circ 0_{R_0} = V_{R_0}$ for $R_0 \subseteq R$.

%In section \ref{sec-unique}, we will show that there is at most one operation $\circ$ satisfying this additivity property and functionality, described above. In section \ref{sec-overring}, we will define $\circ$ in the case where $S$ is a finite overring of $R$, and in section \ref{sec-gen}, we will give a general definition of $\circ$. Finally, in section \ref{sec-cat}, we will show that $\circ$ satisfies all the properties described above.

%\begin{mainthm} \label{uniquness-mainthm}
%	There is precisely one composition of divisors
%	$$\circ \colon \Div(R) \times \Div(S) \rightarrow \Div(S)$$
%	that makes bounded connected HNP rings and divisors between them into a category such that $\p{}$ is a functor, the map
%	$$ V_R \mapsto V_R \circ D_S - D_S$$
%	is additive, and $V_R \circ 0_{R_0} = V_{R_0}$ for $R_0 \subseteq R$.
%	%(By Theorem \ref{injectivity-mainthm}, the functor $\p{}$ is faithful.)
%\end{mainthm}

\begin{example} \label{ex-3}
	Let $(\loc, \pp)$ be a local PID. Similarly as in Example \ref{example-2}, the ring
	$$ R = \I_{M_3(\loc)}\Bigg(  \begin{bmatrix} \pp & \pp & \pp \\ \loc & \loc & \loc \\ \loc & \loc & \loc \end{bmatrix} \Bigg)=  \begin{bmatrix} \loc & \pp & \pp \\ \loc & \loc & \loc \\ \Delta & \loc & \loc \end{bmatrix} $$
	is an HNP ring (actually, a classical hereditary order) with two simple modules $V$ and $W$, successors to each other, with
	$$ I =  \ann V = \begin{bmatrix}
		\pp & \pp & \pp \\ \loc & \loc & \loc \\ \loc& \loc& \loc
	\end{bmatrix} \quad \text{and} \quad J =  \ann W =\begin{bmatrix}
		\loc & \pp & \pp \\ \loc & \pp & \pp \\ \loc& \pp & \pp
	\end{bmatrix} . $$
	It is easy to see that $\rho(R,V) = 1$ and $\rho(R,W) = 2$, and therefore
	$$ \p{ I} = V \quad \text{and} \quad \p{J} = 2 W.$$
	We have $J I = M_3(\pp)$, and we can see that
	$$ J/ (I \cap J) \cong V \quad \text{and} \quad (I \cap J )/ IJ \cong W ,$$
	and thus,
	$$V \circ 2W=  \p{IJ}= V + 3W.$$
\end{example}

	Maximal two-sided ideals in a bounded HNP ring $R$ are in bijective correspondence with simple modules, so at first glance, it seems reasonable to expect that, if we consider just two-sided ideals, the composition of divisors should yield exactly the theory developed by Rump and Yang \cite{RY}. However, the previous example shows that this is not the case. Since the divisor corresponding to the maximal ideal $J$ is $\p{J} = 2W$, it only makes sense to associate $2W$ to the Rump and Yang's divisor of $J$ (and $V$ to the Rump and Yang's divisor of $I$), but using this association, there is no obvious way to interpret the divisor $\p{IJ}= V + 3W$ (according to Rump and Yang's definition of a divisor, the divisor of $IJ$ should be the divisor of $I$ plus two times the divisor of $J$). The reason for this is that $\rho(R,V)$ and $\rho(R,W)$ differ; if $\rho(R,U)$ were constant on each cycle of simple modules (as is the case in Example \ref{example-2}), the two theories would coincide.

%%%%%%%%%%%%%%%%%%%%%%%%%%%%%%%%%%%%%%%%%%%%%%%%
\section{Injectivity of $\p{}$} \label{sec-inj}

We discussed the structure of simple modules of a bounded HNP ring, namely, that they form cycles with respect to the successor relation. If there is an invertible ideal connecting two HNP rings, it comes as no surprise that there is a bijection between simple modules (given by tensoring with this invertible ideal), which preserves the successor relation (and therefore, the cycle structure of simple modules).
%We will present the following lemma in slightly bigger generality than needed in this section as it will be useful in Section \ref{sec-gen}.

\begin{lemma} \label{inv-ideal}
	Let $R$ and $S$ be bounded HNP rings, and let $\fracideal{R}{I}{S}$ be an invertible fractional ideal. Then, for any simple $R$-module $U$, the module $U \otimes I$ is a simple $S$-module. Furthermore, if $V$ is the predecessor of $U$, then $V \otimes I$ is the predecessor of $U \otimes I$.
\end{lemma}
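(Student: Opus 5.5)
The plan is to show that $-\otimes_R I$ is a Morita equivalence between mod-$R$ and mod-$S$; both claims then follow, because an equivalence preserves simplicity and $\Ext$ groups. Since $I$ is invertible, we have
$$ I I^{-1} = O_l(I) = R \quad \text{and} \quad I^{-1} I = O_r(I) = S .$$
Flatness gives natural isomorphisms $I \otimes_S I^{-1} \cong I I^{-1} = R$ as $(R,R)$-bimodules and $I^{-1} \otimes_R I \cong S$ as $(S,S)$-bimodules. It follows that the functors $F = -\otimes_R I$ and $G = -\otimes_S I^{-1}$ are mutually quasi-inverse:
$$ G F(M) \cong M \otimes_R (I \otimes_S I^{-1}) \cong M \otimes_R R \cong M ,$$
and symmetrically $FG \cong \mathrm{id}$. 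So $F$ is an equivalence of categories mod-$R \to$ mod-$S$. (Alternatively, $I_S$ is a progenerator with $\mathrm{End}(I_S) = O_l(I) = R$, which is the standard Morita setup.)

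For the first claim, note that an equivalence preserves the lattice of submodules, since subobjects correspond. Hence $U \otimes I$ is simple whenever $U$ is simple; in particular it is nonzero because $G(U\otimes I) \cong U \neq 0$. Concretely, a nonzero proper submodule $N \subset U \otimes I$ would give a nonzero proper submodule $N \otimes I^{-1} \subset U$, using exactness of $G$ (flatness of $I^{-1}$) and $G(N)\neq 0$.

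For the second claim, an equivalence of abelian categories preserves short exact sequences and induces isomorphisms on Yoneda $\Ext^1$. Thus
$$ \Ext^1_R(V,U) \cong \Ext^1_S(V \otimes I, U \otimes I) .$$
If $V$ is the predecessor of $U$, i.e. $\Ext^1(V,U)\neq 0$, then $\Ext^1(V\otimes I, U\otimes I) \neq 0$, so $V\otimes I$ is the predecessor of $U\otimes I$. Uniqueness of predecessors in a bounded HNP ring (\cite[Proposition 23.4, Theorem 19.3]{LR}) makes this well defined. To be safe, I would state the Ext transfer explicitly: a non-split extension $0\to U\to E\to V\to 0$ maps under the exact functor $F$ to an extension of $V\otimes I$ by $U\otimes I$. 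This extension is non-split, because a splitting would transport back via $G$ to a splitting of the original sequence.

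The main obstacle is verifying that the bimodule isomorphisms $I\otimes_S I^{-1}\cong R$ and $I^{-1}\otimes_R I \cong S$ really hold. Flatness makes the multiplication maps injective, and invertibility makes them surjective onto $R$ and $S$. This relies on the identities $II^{-1}=O_l(I)$ and $I^{-1}I=O_r(I)$ recorded earlier, together with the flatness facts stated in the preliminaries. Once these are in place, the rest is formal Morita theory.
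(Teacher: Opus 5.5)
Your proof is correct, but it takes a more conceptual route than the paper's.

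\textbf{Simplicity.} The paper argues concretely. It writes $U \cong R/A$ with $A$ a maximal right ideal, so that $U \otimes I = I/AI$. Any $B$ with $AI \subset B \subset I$ would then give $A \subset BI^{-1} \subset R$, contradicting maximality of $A$. This is exactly the subobject correspondence behind your equivalence, so here the two arguments agree in substance.

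\textbf{Predecessor.} Here the routes genuinely differ. The paper does not work from the $\Ext^1$ definition. It invokes the bounded-ring criterion of \cite[Proposition 15.7]{LR}: $V \rightarrow U$ iff $\MMM \cap \NNN \neq \MMM\NNN$, where $\MMM = \ann V$ and $\NNN = \ann U$. It transports this along the conjugation $X \mapsto I^{-1} X I$, which preserves intersections by flatness and preserves products because $II^{-1} = R$. It then checks that $I^{-1}\NNN I$ is the annihilator of $U \otimes I$: it is a maximal ideal of $S$, and it kills $I/AI$ because $\NNN I \subseteq AI$.

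\textbf{Comparison.} Your argument shows that $-\otimes_R I$ is a Morita equivalence with quasi-inverse $-\otimes_S I^{-1}$, and hence preserves non-split extensions. It is shorter and works directly from the definition of predecessor. It also does not need boundedness, except to speak of \emph{the} predecessor, and it avoids \cite[Proposition 15.7]{LR} entirely. The paper's route stays in the ideal-theoretic language used throughout. It also yields, as a byproduct, the explicit formula $\ann(U\otimes I) = I^{-1}(\ann U)\, I$.
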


\begin{proof}
	Let $A_R \leq R$ to be a maximal one-sided ideal with
	\begin{align} \label{U-A} U_R \cong R/A \text{.}\end{align}
	Then, we have
	\begin{align} \label{U-A-I} U \otimes I = I / AI.\end{align}
	If $U \otimes I$ were not a simple $S$-module, then there would be a $B_S \leq S$ with
	\begin{align}\label{simple-inc}
		A I \subset B \subset I .
	\end{align}
	Tensoring (or multiplying) by $I^{-1}$ yields
	$$ A \subset B I^{-1} \subset R .$$
	By maximality of $A$, we have either $B I^{-1} = A$ or $B I^{-1} = R$. In either case, multiplying by $I$ contradicts \eqref{simple-inc}. Therefore, $U \otimes I$ is simple.
	
	Now, we will prove the second statement. Let $\MMM$ be the annihilator of $V$ and let $\NNN$ be the annihilator of $U$. By \cite[Proposition 15.7]{LR}, we have
	\begin{align} \label{m-cap-n} \MMM \cap \NNN \neq \MMM \NNN .\end{align}
	Since $I$ is flat, we have
	$$ I^{-1} (\MMM \cap \NNN) I = (I^{-1} \MMM I) \cap (I^{-1} \NNN I ) .$$
	Thus,
	\begin{align} \label{suc-conj}
		(I^{-1} \MMM I) \cap (I^{-1} \NNN I) \neq (I^{-1} \MMM I )(I^{-1} \NNN I ),
	\end{align}
	since if this were not the case, multiplying with $I$ from the left and $I^{-1}$ from the right would contradict \eqref{m-cap-n}.
	Since $\NNN$ is the annihilator of $U$, using \eqref{U-A}, we have
	$ \NNN \subseteq A$,
	and therefore,
	$$   \NNN I \subseteq A I .$$
	Now, by using \eqref{U-A-I}, we see that $I^{-1} \NNN I$ is the annihilator of $U \otimes I$ (since it is a maximal two-sided $S$-ideal). Similarly, $I^{-1} \MMM I$ is the annihilator of $V \otimes I$, and by \eqref{suc-conj} and \cite[Proposition 15.7]{LR}, $V \otimes I$ is the predecessor of $U \otimes I$.
\end{proof}

In a bounded HNP ring, for a minimal inclusion of two-sided ideals $I \subset J$, there is a unique maximal ideal $\MMM$ such that $J \MMM \subseteq I$ \cite[Proposition 7]{Ru}. The following lemma is a slight generalization of this fact.

\begin{lemma} \label{min-inc}
		Let $R$ and $S$ be bounded HNP rings, and let $I \subsetneq J$ be fractional $(R,S)$-ideals with the inclusion being minimal, i.e., there is no fractional $(R,S)$-ideal $K$ with $I \subset K \subset J$. Then,
		$$ (J/I)_S \cong U^{(n)}$$
		for some simple $S$-module $U$ and $n \in \N$.
\end{lemma}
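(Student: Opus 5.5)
The plan is to mimic the proof of \cite[Proposition 7]{Ru}, using the module $M = J/I$. First, $M$ is an $(R,S)$-bimodule of finite length as a right $S$-module. This holds because $I$ contains a fractional right ideal: pick a non-zero-divisor $c$ with $c J \subseteq S$, so that $I \supseteq I \cap S$, and then \cite[Lemma 12.14]{LR} gives finite length of $J/I$ as in the preliminaries. Since $S$ is bounded, every simple right $S$-module has a maximal two-sided ideal as annihilator. Hence $\AAA = \ann_S(M_S)$ is a nonzero two-sided ideal of $S$ with $J\AAA \subseteq I$.

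The key step is to pick a simple right $S$-module $U$ occurring in the socle of $M_S$ and set $\NNN = \ann U$. Let $N = \{ m \in M \mid m\NNN = 0\}$. This is a right $S$-submodule, and it is nonzero because it contains a copy of $U$. It is also a left $R$-submodule, since $\NNN$ is two-sided and $M$ is a bimodule: $(rm)\NNN = r(m\NNN)=0$. So $N$ is a sub-bimodule, and its preimage $K$ in $J$ is an $(R,S)$-bimodule with $I \subsetneq K \subseteq J$. It is a fractional $(R,S)$-ideal, being sandwiched between two fractional ideals, so it is bounded on both sides and contains a non-zero-divisor. By minimality $K = J$, and therefore $M \NNN = 0$.

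Thus $M$ is a right module over $S/\NNN$, which is simple artinian by \cite[Lemma 12.14]{LR}. Every module over a simple artinian ring is a direct sum of copies of its unique simple module, namely $U$. Finite length gives $M_S \cong U^{(n)}$ for some finite $n \ge 1$.

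The main obstacle is checking that the intermediate bimodule $K$ really is a fractional $(R,S)$-ideal, so that minimality applies, and that the socle argument is legitimate, i.e.\ that $M_S$ is finite length and nonzero. Both reduce to the sandwich $I \subseteq K \subseteq J$ and to the preliminary remarks on finite length of quotients of fractional ideals. I expect these to be routine.
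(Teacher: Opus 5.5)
Your proof is correct. It is essentially the paper's argument in dual form. The paper takes a maximal right $S$-ideal $L$ with $I \subseteq L \subsetneq J$, sets $\MMM = \ann(J/L)$, and applies minimality to the fractional $(R,S)$-ideal $J\MMM + I \subseteq L$, which forces $J\MMM \subseteq I$. You instead work with a simple submodule $U$ of $J/I$ and the $\NNN$-annihilated part of $J/I$, which yields $J\NNN \subseteq I$; in both versions, $S/\NNN$ being simple artinian finishes the proof.

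The one blemish is your justification of finite length: the remark about $c$ and $I \supseteq I \cap S$ is a non sequitur. The fact itself is the paper's preliminary remark citing \cite[Lemma 12.14]{LR}. Alternatively, $J/I \cong cJ/cI \subseteq S/cI$, and $cI$ is an essential right ideal of $S$.
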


\begin{proof}
	Let $\fracideal{}{L}{S} \subsetneq J$ be a maximal fractional right $S$-ideal that contains $I$, and denote by $\MMM$ the annihilator of the simple $S$-module $J/L$. Then, the fractional $(R,S)$-ideal $J \MMM + I$ is contained in $L$, and it contains $I$. Since the inclusion $I \subset J$ is minimal, we obtain $I = J\MMM + I $, or equivalently, $J \MMM\subseteq I$. This shows that the right $S$-module $J/I$ is annihilated by $\MMM$, and therefore has $S/\MMM$-module structure. Since $S/\MMM$ is a simple artinian ring \cite[Lemma 12.14]{LR}, the conclusion follows.
\end{proof}

We are now in position to show that $\p{}$ is injective. %and with that prove Theorem \ref{injectivity-mainthm}.

%\begin{theorem} \label{inj-overring}
%	Let $R$ and $S$ be bounded HNP rings. Then, $\p{}$ is injective on $\Ical_R(S)$.
%\end{theorem}

\begin{proof}[Proof of Theorem \ref{injectivity-mainthm}]
	Let $I$ and $J$ be two fractional $(R,S)$-ideals with $\p{I} = \p{J}$.
	For contradiction, assume that $I \subset I +J$, and let $K \subset I + J$ be a fractional $(R,S)$-ideal containing $I$ with the inclusion being minimal. By Lemma \ref{min-inc}, we have
	$$ (I+J)/K \cong U^{(n)}$$
	for some simple $S$-module $U$. Let $T$ be a finite overring of $S$ that kills all the simple modules in the cycle of $U$, except $U$ itself. By Proposition \ref{mult-tens}, we have
	\begin{align} \label{div-eq}
		\p{IT} = \p{JT}.
	\end{align}
	We have
	$$ (I+J)T/KT \cong (I+J)/K \otimes T \cong (U \otimes T)^{(n)}.$$
	Since $T$ does not kill $U$, the module $U \otimes T$ is a simple $T$-module; denote by $\MMM \lhd T$ its annihilator.
	%Indeed, if there were a fractional $(R,T)$-ideal $L$ with
%	$$ KT \subset L \subset (I+J) T,$$
%	then by multiplying with $T^{-1}$ and considering [ref], we would obtain
%	$$ K $$
%	Let $K \subsetneq (I + J)T$ be a fractional $(R,T)$-ideal containing $IT$ with the inclusion being minimal, and denote by $\MMM \lhd T$ the maximal ideal such that $(I+J)\MMM\subseteq K$, which exists by Lemma \ref{min-inc}.
	Let
	$$JT = L_0 \subset L_1 \subset \dots \subset L_{n-1} \subset  L_n = (I+J)T $$
	be a chain of fractional $(R,T)$-ideals with inclusions being minimal. By \eqref{div-eq}, the module $U \otimes T$ appears in the Jordan-Hölder decomposition of $L_{i}/L_{i-1}$ for some $i$. Lemma \ref{min-inc} then shows
	$$L_{i} / L_{i-1} \cong (U \otimes T)^{(m)} .$$
	On the other hand, since $(I+J)T \MMM \subseteq KT$, clearly $L_{i } \MMM \subseteq KT \cap L_{i}$, and therefore,
	$$ L_{i } / KT \cap L_{i} \cong (U \otimes T)^{(k)} .$$
	
	Consider the $(R, O_l(L_{i}))$-ideals $A = (KT \cap L_{i}) L_{i}^{-1}$ and $B = L_{i-1} L_{i}^{-1}$. We claim that $A$ and $B$ are generative, isomaximal right $O_l(L_{i})$-ideals of type $U \otimes L_{i}^{-1}$, yet they are not comparable by inclusion. Since $R$ is a finite HNP subring of $O_l(L_{i})$, it is a multichain idealizer from $O_l(L_{i})$ (Remark \ref{remark}), and clearly
	$$R \subseteq \I_{O_l(L_{i})}(A) \cap \I_{O_l(L_{i})}(B) .$$
	This contradicts \cite[Corollary 8.18]{LR}.
	
	To show the claim, first, note that
	$$O_l(L_{i}) / A \cong L_{i} / (KT \cap L_{i}) \otimes L_{i}^{-1} \cong (U \otimes L_{i}^{-1})^{(k)} ,$$
	and
	$$ O_l(L_{i}) / B \cong L_{i} / L_{i-1} \otimes L_{i}^{-1} \cong (U \otimes L_{i}^{-1})^{(m)} ,$$
	which shows that $A$ and $B$ are proper isomaximal ideals of type $U \otimes L_{i}^{-1}$. To see that $U \otimes L_{i}^{-1}$ is a simple $O_l(L_{i})$-module, write
	$$U \otimes L_{i}^{-1} =  \big( U \otimes O_r(L_{i}) \big) \otimes L_{i}^{-1} .$$
	The ring $O_r(L_{i})$ is a finite overring of $T$, which trivializes the cycle of $U$; therefore, $O_r(L_{i})$ cannot kill $U$, and $U \otimes O_r(L_{i})$ is a simple module. Since $L_{i}^{-1}$ is an invertible fractional $(O_r(L_{i}), O_l(L_{i}))$-ideal, Lemma \ref{inv-ideal} shows that $U \otimes L_{i}^{-1}$ is simple.
	
	Second, we will show that $A$ and $B$ are not comparable by inclusion. We have $KT + L_{i} = (I+J)T$. The isomorphism theorem then implies  $(I+J)T/ L_{i}  \cong K/(K \cap L_{i})$, and similarly $(I+J)T/ L_{i-1}  \cong KT /(KT \cap L_{i-1})$. Since $L_{i-1} \subset L_{i}$, we have $K \cap L_{i-1} \neq KT \cap L_{i} $, and therefore, $L_{i-1}$ does not contain $KT \cap L_{i}$.
	Since the inclusion $L_{i-1} \subset L_{i}$ is minimal, we have
	$$ KT \cap L_{i}+ L_{i-1} = L_{i}. $$
	Multiplying by $L_{i}^{-1}$, we obtain
	$$A + B = O_l(L_{i}).$$
	In particular, since $A$ and $B$ are both proper ideals, this shows that they are not comparable by inclusion.
	
	Last, the right ideals $A$ and $B$ are generative. Indeed, let $\NNN \lhd O_l(L_{i})$ be the annihilator of $U \otimes L_i^{-1}$. Since $\NNN$ annihilates $O_l(L_{i})/A$ and $O_l(L_{i})/B$, we have
	$$ \NNN \subseteq A \cap B .$$
	In particular, $\NNN \subset A$ and $\NNN \subset B$, since if any of the inclusions were not proper, $A$ and $B$ would be comparable by inclusion.  Since $\NNN$ is a maximal two-sided ideal, this shows that $A$ and $B$ are generative.
\end{proof}

\section{Overring Case} \label{sec-overring}

Before giving the general definition of the operation $\circ$ satisfying Theorem \ref{uniquness-mainthm} (we will do that in Section \ref{sec-gen}), we will define it just in the case of overrings. To avoid confusion, we will use $\star$ to denote it in this special case. But first, we will define an auxiliary map $\phi$.

\begin{definition} \label{def}
	Let $R$ be a bounded HNP ring, $S$ a finite overring of $R$, $U$ a simple $S$-module, and let
	$$ V_{n} \rightarrow \dots \rightarrow V_{1} \rightarrow V_0$$
	be the full cycle of simple $R$-modules with $V_0 \otimes_R S = U$.
	%	Denote
	%	\begin{align*}
		%		r_j &= \rho(R, V_{j})\\
		%		r &= r_0 + r_1 + \dots + r_n \text{.}
		%	\end{align*}
	%	For a $k \in \Z$, let $k' \in \{ 0, 1, \ldots, r - 1 \}$ be such that
	%	$$ k \equiv k' \,\, (\modulo \, \rho(S, U) ) \text{.}$$
	For a $k \in \Z$, define the additive map
	$$\phi(k \cdot U) \colon \Div(R) \rightarrow \Div(S)$$
	with
	$$ \phi(k \cdot U) (V_{j}) = \begin{cases} U & \sum_{i=0}^{j-1} \rho(R, V_{i}) \leq \overline{k}  < \sum_{i=0}^{j} \rho(R, V_{i}) \\
		0 & \text{otherwise} \end{cases} \text{,}$$
	where $\overline{k} \in \{ 0, 1, \ldots, \rho(S, U) - 1 \}$ is such that
	$$ k \equiv \overline{k} \,\, (\modulo \, \rho(S, U) ) \text{,}$$
	and $\phi(k \cdot U)(W) = 0$ for any other simple $R$-module $W$.
	For any
	$$D  = \sum_{U \in \M_S} k_U \cdot U \in \Div(S)$$
	extend the above definition with
	$$ \phi(D) = \sum_{U \in \M_S} \phi(k_U \cdot U) \text{.}$$
	Note that $\phi(0 \cdot U)$ is \textbf{not} the zero map.
\end{definition}

\begin{definition} \label{star-def}
	Let $S$ be a finite overring of $R$, $V \in \Div(R)$, and $D \in \Div(S)$. We denote
	$$V \star D = \phi(D)(V) + D \in \Div(S) \text{,}$$
	where $\phi$ is given in Definition \ref{def}.
\end{definition}

%The divisor $V \circ D$ can be explicitly described if $S \supseteq R$.

%\begin{mainthm} \label{overring-mainthm}
%	Let $R$ be a bounded HNP ring, $S$ a finite overring of $R$, $V \in \Div(R)$, and $D \in \Div(S)$. Then, we have
%	$$V \circ D = \phi(D)(V) + D \text{.}$$
%\end{mainthm}
%
%
%We will later define $\circ$ and show that it satisfies the conditions of Theorem \ref{uniquness-mainthm}; Theorem \ref{overring-mainthm} will then trivially hold by Definition \ref{star-def}.

In this section, we will show Lemma \ref{ideal-lemma}, which states that if the ring $S$ is big enough and the ring $R$ is small enough, then every divisor $D \in \Div(S)$ comes from a unique fractional ideal $\fracideal{R}{I}{S}$, i.e.,
$$ D = \p{I}.$$
For such a $D$ and $V \in \Div(R)$, we will prove Theorem \ref{star-mult}, which states that
$$ V \star D = V \otimes I + D . $$
This gives us a convenient way to compute $\star$, and we can use this to show that it satisfies the desired properties (i.e., being consistent with multiplication of ideals).

To apply this even if $S$ (resp.\ $R$) is not big (resp.\ small) enough, we will establish Lemma \ref{down-lemma} and Lemma \ref{tensor-lemma}, which show that $\star$ behaves well when transitioning from the ring $R$ to a smaller ring $R_0$ and from the ring $S$ to a bigger ring $\overline{S}$.
Then, we will prove Lemma \ref{module-lemma}, which addresses the existence of this small ring $R_0$ and shows that fractional $(R_0,S)$-ideals can be explicitly described.%, and computes the tensor product of a simple $R_0$-module with a fractional ideal.

\begin{lemma} \label{down-lemma}
	Let $S \supseteq R$ be a finite extension of bounded HNP rings, and let $V \in \Div(R)$ and $D \in \Div(S)$.  For a finite subring $R_0 \subseteq R$, we have
	$$ \phi(D)(V) = \phi(D)(V_{R_0}) .$$
\end{lemma}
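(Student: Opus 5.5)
We argue one $U\in\M_S$ and one $k\in\Z$ at a time. Both sides are additive in $V$ and in the decomposition $\phi(D)=\sum_U\phi(k_U\cdot U)$, and here $V_{R_0}$ denotes the class in $\Div(R_0)$ of the restriction of $V$ to $R_0$. So it suffices to prove $\phi(k\cdot U)(V)=\phi(k\cdot U)(V_{R_0})$ for a single simple $R$-module $V$. By Remark \ref{remark}, $R_0$ is an iterated basic idealizer from $R$. Restriction is transitive, and $\phi$ is defined uniformly for every finite subring of $S$. Hence, by induction on the number of steps, we may assume $R_0=\I_R(A)$ is a basic idealizer, with $A$ isomaximal and generative of type $V_1$, say.

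The core step is a precise description of how the cycles and the numbers $\rho$ change under a basic idealizer. We would extract the following from \cite{LR} (Chapters 8, 13 and 19--23):

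\begin{enumerate}[label=(\alph*)]
\item The simple $R$-modules other than $V_1$ remain simple over $R_0$, are not killed by $R$, and keep the same $\rho$.
\item The restriction of $V_1$ to $R_0$ has exactly two composition factors $W'\rightarrow W$, each with multiplicity one. Here $W\otimes_{R_0}R=V_1$ and $R$ kills $W'$.
\item In the cycle of $R_0$-modules, $W'\rightarrow W$ replaces $V_1$ at the same position, i.e.\ immediately after the predecessor of $V_1$.
\item $\rho(R_0,W')+\rho(R_0,W)=\rho(R,V_1)$.
\end{enumerate}

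Since $W'\otimes S=0$, the cycle of $R_0$-modules attached to $U$ begins at the $R_0$-module $V_0'$ with $V_0'\otimes_{R_0}S=U$. Note that $V_0'$ is the restriction of $V_0$, or $W$ if $V_0=V_1$, so the indexing in Definition \ref{def} starts at the corresponding module. In particular $\rho(S,U)$, and hence $\overline{k}$, is the same computed from either ring.

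With this in hand the computation is immediate. Definition \ref{def} assigns to each module of the cycle a half-open interval of $\{0,\ldots,\rho(S,U)-1\}$, whose length is its $\rho$, placed consecutively in cycle order starting from $V_0$. By (c) and (d), the intervals of $W$ and $W'$ partition the interval of $V_1$, with $W$ first and $W'$ next, while all other intervals are unchanged. Therefore
\[
\phi(k\cdot U)(V_{1,R_0})=\phi(k\cdot U)(W)+\phi(k\cdot U)(W')=\phi(k\cdot U)(V_1),
\]
since $\overline{k}$ lies in at most one of the two pieces and the union of the pieces is the interval of $V_1$. Every other $V$ restricts to a simple $R_0$-module with the same interval, so the two sides agree there as well. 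Modules outside the cycle of $U$ contribute $0$ on both sides, because restriction preserves the decomposition into cycles.

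The main obstacle is establishing (b)--(d) rigorously. This concerns the precise restriction of $V_1$ to the idealizer, the placement of the new module in the cycle, and additivity of $\rho$. For (b) and (d) we would first try the exact sequence $0\to A/\MMM'\to R/\MMM'\to\cdots$ for the idealizer, where $\MMM'$ is the relevant annihilator, together with the standard facts in \cite[\S 5, \S 13]{LR} that $R/A$ and $A/(\text{the }R_0\text{-submodule})$ give the two new simples. For (d) we would use the definition of $\rho$ via lengths. The cycle position in (c) should then follow from \cite[Proposition 15.7]{LR} via the nonvanishing of $\Ext^1(W',W)$.
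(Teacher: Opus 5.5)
Your proposal is correct and is essentially the paper's proof: after reducing to $V$ simple and $D=k\cdot U$, you use that over $R_0$ each module of the cycle is replaced, in place, by the consecutive block of its composition factors, and that $\rho$ is additive along this refinement, so the intervals of Definition \ref{def} are merely subdivided. The facts (a)--(d) you flag as the main obstacle are exactly what the paper cites, namely \cite[Theorem 20.2]{LR} and \cite[Theorem 32.19]{LR}; these hold for an arbitrary finite subring $R_0$, so your induction over basic idealizers is harmless but unnecessary, and citing them also replaces your sketch for (c), where $\Ext^1(W',W)\neq 0$ alone would not place $W'\rightarrow W$ between the neighbours of $V_1$.
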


\begin{proof}
	By additivity of the tensor product and $\phi(D)$, we may assume that $V$ is a simple $R$-module, and by Definition \ref{def} and additivity, we may also assume that $D = k \cdot U$ for a simple $S$-module $U$. Let
	$$V_n \rightarrow \dots \rightarrow V_1 \rightarrow V_0$$
	be the cycle of simple $R$-modules containing $V$.
	By \cite[Theorem 20.2 ]{LR} (considering Remark \ref{remark}), over $R_0$ this cycle becomes 
	$$ V^{k_n-1}_{n} \rightarrow \dots \rightarrow V^{0}_{n} \rightarrow \dots \rightarrow V^{k_0-1}_{0} \rightarrow \dots \rightarrow V^{0}_{0} ,$$
	where $V^{k_i-1}_{i}, \ldots, V^{0}_{i}$ is the JH decomposition of $(V_i)_{R_0}$, where $V^{0}_{0}\otimes R = V_0$.
	If $V_i \otimes S \neq U$ for any $i$, then clearly, $V^j_i \otimes S \neq U$ for any $i$ and $j$, and therefore,
	$$ \phi(D)(V) =  0 = \phi(D)(V_{R_0}) .$$
	So we only need to consider the case where $V_i \otimes S = U$ for some $i$. By relabeling $V_i$ if needed, we may assume that $V_0 \otimes S = U$.
	If we denote 
	$$ r_i = \rho(R, V_i) \quad \text{and} \quad r_{ij} = \rho(R_0, V^{j}_{i}) ,$$
	\cite[Theorem 32.19]{LR} shows
	\begin{align} \label{ri-sum}
		r_i = \sum_{j = 0}^{k_i-1} r_{ij} .
	\end{align}
	By Definition \ref{def} and equality \eqref{ri-sum}, we have
	$$ \phi(k \cdot U)(V^{j}_{i}) =\begin{cases} U & \sum_{l=0}^{i-1} r_{l} + \sum_{l=0}^{j-1} r_{il}  \leq  k' < \sum_{l=0}^{i-1} r_{l} + \sum_{l=0}^{j} r_{il} \\
		0 & \text{otherwise} \end{cases} .$$
	Therefore,
	\begin{align*}
		\phi(k \cdot U)((V_i)_{R_0}) &= \sum_{j=0}^{k_i-1} \phi(k \cdot U)(V^{j}_{i}) \\
		&= \begin{cases} U & \sum_{l=0}^{i-1} r_{l}  \leq  k' < \sum_{l=0}^{i} r_{l}   \\
			0 & \text{otherwise} \end{cases} ,
	\end{align*} 
	which concludes the proof.
\end{proof}

\begin{lemma} \label{tensor-lemma}
	Let $R$ be a bounded HNP ring, $S \supseteq R$ a finite overring, $V \in \Div(R)$ and $D \in \Div(S)$. For any finite overring $T \supseteq S$, we have
	$$\phi(D)(V) \otimes T= \phi(D \otimes T)(V) \text{.}$$
%	(The domain of the $\star$ operation on the right-hand-side is $\Div(R) \times \Div(\overline{S})$.)
\end{lemma}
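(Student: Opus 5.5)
The plan is to reduce to the simplest possible inputs and then compare the two sides by a direct computation with Definition \ref{def}. Both sides are additive in $V$: tensoring with $T$ is additive, and so is each $\phi(\cdot)$. Hence I may assume that $V$ is a simple $R$-module. The map $\phi(D)$ is the sum $\sum_{U \in \M_S} \phi(k_U \cdot U)$, and the same holds for $\phi(D \otimes T)$. So I would first isolate which terms can contribute at the fixed simple $V$. Let $\mathcal C$ be the cycle of $R$-modules containing $V$. By Definition \ref{def}, the only summand $\phi(k_U \cdot U)$ that can be nonzero at $V$ is the one for the unique simple $S$-module $U$ whose ``segment'' $V_n \rightarrow \dots \rightarrow V_0$ (the modules of $\mathcal C$ between the previous non-killed module and $V_0$, where $V_0 \otimes_R S = U$) contains $V$. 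In particular $\phi(D)(V) = \phi(k_U \cdot U)(V) \in \{U, 0\}$.

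Next I would describe how these segments behave under passing from $S$ to $T$, using \cite[Theorem 20.2]{LR} and \cite[Theorem 13.12]{LR}. The $S$-cycle containing $U$ has the form $\dots \rightarrow U_m \rightarrow \dots \rightarrow U_1 \rightarrow U_0 \rightarrow \dots$. Here $U_0 \otimes T = U'$ is simple, and $U_1, \ldots, U_m$ are the modules killed by $T$ immediately preceding $U_0$. The $R$-segment of $U'$ is then the concatenation of the $R$-segments of $U_0, U_1, \ldots, U_m$, in this order. Applying \cite[Theorem 32.19]{LR} twice, I would obtain
$$\rho(T, U') = \sum_{i=0}^m \rho(S, U_i) = \sum_{i=0}^m \sum_{W \in \text{seg}(U_i)} \rho(R, W).$$
This is the same type of bookkeeping as \eqref{ri-sum} in the proof of Lemma \ref{down-lemma}. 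In addition, $D \otimes T$ has coefficient $k_{U_0}$ at $U'$, because the $U_i$ with $i \ge 1$ are killed and no other $S$-module maps to $U'$.

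With this in hand, the comparison splits into cases. If $V$ lies in the segment of $U_0$, both sides are computed from the partial sums of $\rho(R,\cdot)$ along the same initial piece of the segment. In that situation I would need to check that $\overline{k}$ taken modulo $\rho(S,U_0)$ and modulo $\rho(T,U')$ land in the same interval. If $V$ lies in the segment of some $U_i$ with $i \ge 1$, then the left side is $\phi(k_{U_i}\cdot U_i)(V) \otimes T = 0$, so I must show that the right side also vanishes. That is, $\overline{k_{U_0}}$ modulo $\rho(T,U')$ must not fall in the later range $\big[\sum_{l<i}\rho(S,U_l), \sum_{l \le i}\rho(S,U_l)\big)$.

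The main obstacle is exactly this residue-matching step. Taken literally, $k \bmod \rho(T,U')$ can exceed $\rho(S,U_0)$, and then the two sides would disagree. So my first attempt would be to re-examine the precise convention in Definition \ref{def}: which segment the ``full cycle'' $V_n \rightarrow \dots \rightarrow V_0$ refers to, and how $\phi(0\cdot U)$ contributes. The aim would be to show that the index ranges are aligned so that the relevant residues coincide. If that fails, I would fall back on Lemma \ref{down-lemma}. That lemma allows shrinking $R$ to a smaller finite subring $R_0$ without changing either side, and I would use it to arrange that the segments refine enough to make the interval comparison a purely arithmetic identity of partial sums.
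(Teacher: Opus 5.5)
\textbf{Gap: the proposal stops at the residue-matching step, and that step cannot be completed as set up.} You reduce to comparing $k \bmod \rho(S,U_0)$ with $k \bmod \rho(T,U')$, and your suspicion that these disagree is correct. With $\overline{k}$ reduced modulo $\rho(S,U)$, as literally written in Definition \ref{def}, the lemma is false. So no re-indexing can rescue it. Lemma \ref{down-lemma} cannot either, since shrinking $R$ changes neither $\rho(S,U_0)$ nor $\rho(T,U')$.

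Here is a counterexample under that reading. Let $R=S$ be the ring of Example \ref{ex-3} and $T=M_3(\loc)$. Then $T/R\cong W$, so $T$ kills $W$, and $V\otimes T=U$ is the simple $T$-module, with $\rho(T,U)=3$. For $S=R$ the literal definition makes $\phi(D)$ the identity. Take $D=V$: the left side at $V$ is $V\otimes T=U$. On the right, $D\otimes T=1\cdot U$ and $\overline{1}=1\notin[0,\rho(R,V))=[0,1)$, so $\phi(1\cdot U)(V)=0$.

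\textbf{Missing idea: the reading of Definition \ref{def} under which the paper's one-line proof is valid.} Reduce $\overline{k}$ modulo the total $\sum_{i=0}^{n}\rho(R,V_i)$ over the \emph{full} $R$-cycle. This is the only reading in which the word ``full'' matters. It is also the reading Example \ref{ex-3} needs: over $S=R$, $V\circ 2W=\phi(2W)(V)+2W=V+3W$ forces $\phi(2W)(V)=V+W$.

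Under this reading, $\phi(k\cdot U)$ may send a module outside the segment of $U$ to $U$. So your first reduction, that only one summand contributes and $\phi(D)(V)\in\{U,0\}$, is wrong: several $S$-modules of one cycle can contribute at $V$.

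Your case analysis is replaced by a coefficient comparison. Fix a simple $T$-module $U'$ and let $U$ be the simple $S$-module with $U\otimes T=U'$.
\begin{enumerate}
\item The starting module $V_0$ is the same for $U$ over $S$ and for $U'$ over $T$, since $V_0\otimes T=U\otimes T$.
\item The modulus depends only on the $R$-cycle, so it is the same on both sides.
\item $D\otimes T$ has coefficient $k_U$ at $U'$.
\end{enumerate}
Hence the $U'$-coefficient of $\phi(D\otimes T)(V)$ equals the $U$-coefficient of $\phi(D)(V)$, which is the $U'$-coefficient of $\phi(D)(V)\otimes T$. $S$-modules killed by $T$ contribute nothing to either side. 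At no point does $\rho(S,U_0)$ need to be compared with $\rho(T,U')$.
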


\begin{proof}
%	Clearly, we only have to prove
%	$$\phi_S(D)(V) \otimes \overline{S} = \phi_{\overline{S}}(D \otimes \overline{S})(V) \text{.}$$
	Let $W$ be any simple $T$-module, and let $U$ be the simple $S$-module with $U \otimes T= W$.
	By Definition \ref{def}, it is clear that the coefficient of $\phi(D \otimes T )(V)$ at $W$ is the same as the coefficient of $\phi(D)(V) $ at $U$,
	which is obviously the same as the coefficient of $\phi(D)(V) \otimes T$ at $W$.
\end{proof}

\begin{lemma} \label{extensions}
	Let $R$ be an HNP ring and let $D, E \in \Div(R)$. If for every finite overring $S \supseteq R$ that trivializes the cycles of all simple modules appearing in $D$ and $E$ there is a finite overring $T \supseteq S$ such that
	$$ D \otimes T = E \otimes T\text{,}$$
	then $D = E$.
\end{lemma}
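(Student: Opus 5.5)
We prove the statement by contradiction, working one simple module at a time. Write $F = D - E$ and assume $F \neq 0$. Pick a simple $R$-module $U$ with $F(U) \neq 0$. Let
$$U = V_0 \leftarrow V_1 \leftarrow \dots \leftarrow V_n$$
be the cycle of $U$. We will choose the overring $S$ so that, apart from $U$, it kills exactly the other members of this cycle. Every other cycle containing a simple module appearing in $D$ or $E$ is handled by killing all but one of its members, chosen arbitrarily. The other cycles are left untouched. By \cite[Theorem 13.8]{LR} there is such an overring $S = R(\V)$. Since $S$ kills no full cycle, it is finite by \cite[Corollary 27.10]{LR}, and it trivializes the cycles of all simple modules appearing in $D$ and $E$. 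The hypothesis then gives a finite overring $T \supseteq S$ with $D \otimes T = E \otimes T$, that is, $F \otimes T = 0$, using the additivity of $\otimes T$ on divisors.

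The key step is to compute the coefficient of $F \otimes T$ at $U \otimes T$ and show it equals $F(U)$. First, $T$ does not kill $U$. Since $T \supseteq S$ is finite, $T$ kills no cycle of simple $S$-modules. The cycle of $U \otimes S$ over $S$ is trivial, since $S$ kills the other $V_i$. Hence $T$ cannot kill $U\otimes S$, and so $U \otimes T = (U \otimes S)\otimes T$ is a simple $T$-module by \cite[Theorem 13.12]{LR}.

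Next, no simple $R$-module $W \neq U$ with $F(W)\neq 0$ satisfies $W \otimes T \cong U \otimes T$. If $W$ lies in the cycle of $U$, then $W\otimes S = 0$ already. If $W$ lies in another cycle, we use that the correspondence of \cite[Theorem 13.12]{LR} between simple $R$-modules not killed by $T$ and simple $T$-modules is a bijection. Then $W \otimes T \neq U \otimes T$ unless $W = U$. Injectivity of this correspondence is part of the statement of \cite[Theorem 13.8, Theorem 13.12]{LR}: distinct unkilled simple modules stay distinct. If needed, it also follows from comparing annihilators, since the unkilled $W$ is determined by $\ann(W\otimes T)\cap R$.

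Therefore the coefficient of $F\otimes T$ at $U\otimes T$ is exactly $F(U) \neq 0$, contradicting $F\otimes T = 0$. Hence $D = E$.

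The main obstacle is precisely this injectivity: tensoring by $T$ must not merge two distinct simple $R$-modules. If citing \cite[Theorem 13.12]{LR} for this proves insufficient, the fallback is to use the explicit description of $T$ as $R(\V')$. In that description the simple $T$-modules are indexed by the complement of $\V'$ (\cite[Theorem 13.8]{LR}), which gives injectivity directly. The remaining steps, namely the existence and finiteness of $S$ and the non-killing of $U$, follow routinely from the results cited above.
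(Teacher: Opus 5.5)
Your proof is correct and is essentially the paper's argument: choose $S$ trivializing the cycles of the modules appearing in $D$ and $E$ while sparing the module $U$ at which they differ, observe that no finite $T\supseteq S$ can kill $U$, and compare coefficients at $U\otimes T$. The differences are minor. The paper gets non-killing directly over $R$: since $T$ kills everything $S$ kills, killing $U$ would kill a whole $R$-cycle, contradicting \cite[Corollary 27.10]{LR}. The paper also treats as clear the injectivity of $W\mapsto W\otimes T$ on unkilled simple modules, which you rightly justify by the bijection of \cite[Theorem 13.12]{LR}. Your annihilator fallback would need more care, since $\ann(W\otimes T)\cap R$ is in general not maximal, but you do not need it.
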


\begin{proof}
	Assume that there is a simple $R$-module $V$ such that the coefficients of the divisors $D$ and $E$ at $V$ differ.
	Let $S$ be a right finite overring of $R$ that trivializes the cycles of all simple modules appearing in $D$ and $E$, but doesn't kill $V$. Then, any finite overring $T\supseteq S$ cannot kill $V$ (as then it would kill a whole cycle, which would contradict \cite[Corollary 27.10]{LR}), therefore $V  \otimes T$ is a simple $T$-module, and since the coefficient of $D \otimes T$ at $V \otimes T$ is clearly the same as coefficient of $D$ at $V$, and similarly for $E$, we obtain
	$$ D \otimes T \neq E \otimes T \text{.} \eqno{\qedhere}$$
\end{proof}

\begin{proposition} \label{module-lemma}
	Let $S$ be a bounded HNP ring and
	$$ R = \I_S(\Acal)$$
	for $\Acal = \{ \Acal_1, \ldots, \Acal_t\}$ a multichain of type $U_1, \ldots, U_t$ with
	$$ \Acal_i = \{ A_{in_i} \subset \dots \subset A_{i1}\} \text{.}$$
	By \cite[Proposition 9.4]{LR}, $S$-module $U_i$ viewed as an $R$-module has the Jordan-Hölder decomposition
	$$ V_{i n_i} \rightarrow \dots \rightarrow V_{i1}  \rightarrow V_{i0}$$
	with $V_{i0} \otimes S = U_i$.
	Then, we have
	$$ V_{ij} \otimes_R (A_{1k_1} \cap \ldots \cap A_{t k_t}) = V_{ij} \otimes_R A_{ik_i} = \delta_{jk_i} U_i \text{,}$$
	where $0 \leq k_l \leq n_l$ (and $A_{l0} = S$).
\end{proposition}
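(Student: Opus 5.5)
The plan is to compute everything explicitly via the identification $V_{ij}\otimes_R X \cong X/\mathfrak{a}X$, where $V_{ij}\cong R/\mathfrak{a}$ for a suitable maximal right ideal $\mathfrak{a}$ of $R$. One works inside the $S$-module $U_i$, using the chain description from \cite[Proposition 9.4]{LR}. Concretely, the $R$-composition series of $U_i \cong S/A_{i1}$ comes from the chain
$$S=A_{i0}\supset A_{i1}\supset\dots\supset A_{in_i}\supset A_{in_i}\MMM_i \ (\text{or } \MMM_i=\ann U_i),$$
and under the labelling of \cite[Proposition 9.4]{LR} one has $V_{ij}\cong A_{ij}/A_{i,j+1}$ for $j\ge1$ (suitably indexed), with $V_{i0}$ the top factor mapping onto $U_i$ after tensoring with $S$. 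First, I reduce to a single chain. The simple modules $V_{ij}$ belong to the block of $U_i$, and for $l\neq i$ the ideal $A_{lk_l}$ contains $\ann U_l$, which is comaximal with $\ann U_i$. This should give $V_{ij}\otimes_R A_{lk_l}\cong V_{ij}\otimes_R S$ for $l\neq i$. Combined with flatness and the identity $(J\cap K)I=JI\cap KI$, applied after writing $A_{1k_1}\cap\dots\cap A_{tk_t}$ as a product or intersection of pieces that are trivial away from each chain, this yields the first equality.

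Second, for a single chain I compute $V_{ij}\otimes_R A_{ik}$. I write the intermediate $S$-module $M = A_{ik}/A_{ik}\MMM_i$, which is a direct sum of copies of $U_i$. Each $A_{ij}$ is a right $S$-module and a left $R$-module, since $R$ idealizes it, so $V_{ij}\otimes_R A_{ik}$ is a quotient of $A_{ik}$ by the right $S$-submodule generated by $\mathfrak{a}A_{ik}$. The key observation is that $A_{ij}A_{ik}\subseteq A_{ik}$, with equality or strict inclusion depending on $j\le k$ or $j>k$. Because $A_{ik}$ is generative, $SA_{ik}=A_{ik}$, and hence $A_{ij}A_{ik}=A_{ik}$ when $j\le k$. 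For $j>k$, by contrast, $A_{ij}A_{ik}\subseteq A_{ij}$. Using the exact sequences $0\to A_{i,j+1}\to A_{ij}\to V_{ij}\to0$ of left $R$-modules (reading them on the correct side), tensoring with the flat $A_{ik}$, and comparing lengths, I expect exactly one factor to survive, giving a single copy of $U_i$, and it should occur precisely at index $j=k$. The total is forced: $R/\mathfrak{a}_{\text{chain}}\otimes A_{ik}$ must have length equal to that of $U_i\otimes_R S$, which is one copy of $U_i$, and the factors with $j\neq k$ vanish by the equality $A_{ij}A_{ik}=A_{ik}$ or by the killing pattern. For $k=0$, where $A_{i0}=S$, this recovers the definition $V_{i0}\otimes S=U_i$ together with $V_{ij}\otimes S=0$ for $j\geq1$; this is the base case.

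The main obstacle is bookkeeping: pinning down exactly which $V_{ij}$ is killed by which $A_{ik}$. A cleaner route is to note that $O_l(A_{ik})$ is a finite overring of $R$ (namely $\I_S$ of the truncated chain) and that $A_{ik}$ is invertible as an $(O_l(A_{ik}),S)$-ideal up to the overring. One then uses $V\otimes_R A_{ik}=(V\otimes_R O_l(A_{ik}))\otimes A_{ik}$ and applies Lemma \ref{inv-ideal} together with the description of which simples an overring kills. I would try this route first, and fall back on the direct length count if identifying the orders proves messy.
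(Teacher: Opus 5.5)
Your proposal has a genuine gap at the step that carries the content of the proposition. You never show that the surviving factor is $V_{ik}$; you only ``expect'' it, and each mechanism you offer for it is wrong.

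\emph{The identification of $V_{ij}$.} The quotient $A_{ij}/A_{i,j+1}$ is a right $S$-module isomorphic to $U_i^{(m)}$, not the simple right $R$-module $V_{ij}$. Reading it as a left $R$-module does not make it $V_{ij}$ either. Similarly, $S/A_{i1}\cong U_i^{(m)}$, not $U_i$.

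\emph{The key observation.} Generative means $SA=S$; the displayed definition in the paper is misprinted, and its proofs use $SA_{lk}=S$. Then $A_{ij}A_{ik}=A_{ij}(SA_{ik})=A_{ij}$ for all $j,k$. So your key observation is false: for $j<k$ one has $A_{ij}A_{ik}=A_{ij}\not\subseteq A_{ik}$. In any case this product cannot distinguish the indices.

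\emph{The fallback route.} It misidentifies $O_l(A_{ik})$, which is not an idealizer inside $S$. In Example~\ref{example-2}, $A=tM_2(\loc)$ has $O_l(A)=tM_2(\loc)t^{-1}\not\subseteq S$. This ring kills the socle $V_{10}=R/A$ of $U_R$ and keeps $V_{11}=S/R$: indeed $V_{10}\otimes A=A/A^2=0$ and $V_{11}\otimes A=S/A\cong U$. That is the opposite of what $S$, the idealizer of the truncated chain, does. Deciding which $V_{ij}$ the left order keeps is the proposition itself.

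\emph{The reduction to one chain.} This is only sketched ("this should give"), and tensoring does not commute with intersections.

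What does work is your length count. Let $B=A_{1k_1}\cap\dots\cap A_{tk_t}$. By flatness $SB=S$, so $U_i\otimes_R B\cong U_i\otimes_S SB=U_i$. Flatness of ${}_RB$ then gives $\sum_j\JH{V_{ij}\otimes B}=U_i$, so exactly one $V_{ij}\otimes B$ is nonzero, and it equals $U_i$. This also repairs your reduction. The identity $A_{ik_i}+\bigcap_{l\ne i}A_{lk_l}=S$ gives a surjection $V_{ij}\otimes B\to V_{ij}\otimes A_{ik_i}$, so the survivor for $B$ is the survivor for $A_{ik_i}$.

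What is missing is a way to locate that survivor. The paper does this by induction on $n_i$, peeling off the smallest ideal of the chain. Set $T=\I_S(A_{in_i})$. Then:
\begin{itemize}
\item $R=\I_T(\Acal_{\down{T}})$;
\item $S/T\cong V_{in_i}^{(n)}$;
\item $T/A_{in_i}\cong W^{(n)}$, where $W_R$ has composition factors $V_{i,n_i-1},\dots,V_{i0}$.
\end{itemize}
Using $T(A_{lk})_{\down{T}}\in\{A_{in_i},T\}$ and $B_{\down{T}}\otimes_T S=B$, one computes $S/T\otimes B$ explicitly, which gives $V_{in_i}\otimes B=\delta_{n_ik_i}U_i$. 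For $j<n_i$ there are two cases. If $k_i<n_i$, apply the induction hypothesis over $T$ and tensor up to $S$. If $k_i=n_i$, use $T/A_{in_i}\otimes B=TB/A_{in_i}B=0$.
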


\begin{proof}
	We proceed by induction on $n_i$. First, assume that $n_i = 0$, i.e., $\Acal_i  = \emptyset$, or rather, $\Acal_i$ does not even appear in $\Acal$. By \cite[Theorem 8.8]{LR}, the simple $S$-module $U_i$ is then also simple as an $R$-module (which we denote $V_{i0}$). Denote $\MMM = \ann U_i$.
	Since ideals $A_{lk}$ are generative, we have $S A_{lk} = S$ for every $l$, and therefore,
	$$ S(A_{1k_1} \cap \ldots \cap A_{t k_t}) = SA_{1k_1} \cap \ldots \cap SA_{t k_t} = S$$
	by flatness. The ideal $\fracideal{R}{B}{S} = A_{1k_1} \cap \ldots \cap A_{t k_t}$ is thus also generative, i.e., $S B =S$. We have
	$$S/\MMM \otimes B = S B / \MMM B  = S/\MMM ,$$
	where we used flatness of $B$ and equality $\MMM = \MMM S$. From $S/\MMM = U^{(n)}$, we conclude
	$$ V_{i0} \otimes B = U ,$$
	which is exactly the wanted conclusion since necessarily $k_i = 0$ and $A_{i0} = S$.
	
	Now assume that $n_i > 0$ and that the conclusion of the lemma holds for all smaller integers. By \cite[Theorem 8.4]{LR}, we have, for $T = \I_S(A_{in_i})$,
	$$ R = \I_T(\Acal_{\down{T}}) \text{,}$$
	where $\Acal_{\down{T}}$ consists precisely of chains $\Acal_l$ from $\Acal$, except that $A_{in_i}$ is omitted and $A_{lk_l}$ are replaced by
	$$ (A_{lk_l})_{\down{T}} = A_{lk_l} \cap T \text{,}$$
	considering \cite[Definition 7.1]{LR} Cases $(\alpha_1)$ and $(\alpha_2)$. By \cite[Theorem 4.4, Corollary 4.8]{LR}, $U_T$ has the Jordan-Hölder decomposition
	$$V \rightarrow W .$$
	By \cite[Theorem 8.8]{LR}, the module $V_R$ is simple, since $\Acal_{\down{T}}$ does not contain a (non-empty) chain of type $V$. Therefore, $V_R = V_{i n_i}$ and $W_R$ has Jordan-Hölder decomposition
	$$ V_{i,n_i-1} \rightarrow \dots \rightarrow V_{i0} \text{.}$$

	First consider the case where $j = n_i$. As before, denote $B = A_{1k_1} \cap \ldots \cap A_{t k_t}$, and let  $B_{\down{T}} = (A_{1k_1})_{\down{T}} \cap \ldots \cap (A_{t k_t})_{\down{T}}$.
	By the proof of \cite[Theorem 7.2]{LR},
	$$T (A_{lk})_{\down{T}} = \begin{cases} A_{i n_i} & l = i, \, k = n_i \\ T & \text{otherwise} \end{cases} ,$$
	and therefore, by flatness,
	$$ T B_{\down{T}}  = \begin{cases} A_{i n_i} & k_i = n_i \\ T & k_i \neq n_i \end{cases} .$$
	Thus,
	$$ S/T \otimes B_{\down{T}} = S B_{\down{T}} / T B_{\down{T}} = \begin{cases} S/A_{i n_i} & k_i = n_i \\ S/T & k_i \neq n_i \end{cases} .$$
	By \cite[Corollary 13.4 (ii)]{LR}, we have $(A_{lk})_{\down{T}}  \otimes_T S = A_{lk}$ for every $l$ and $k$, and therefore, by flatness,
	$$ B_{\down{T}} \otimes S = B ,$$
	and hence, we have
	$$ S/T \otimes B = S/T \otimes B_{\down{T}}  \otimes S = \begin{cases} S/A_{i n_i} & k_i = n_i \\ 0 & k_i \neq n_i \end{cases} .$$
	By \cite[Theorem 4.4]{LR},
	$$ S/T \cong V^{(n)}  \quad \text{and} \quad S/ A_{i n_i} = U_i^{(n)}\text{,}$$
	and by the above, we conclude that
	$$ V_{in_i} \otimes B = \delta_{n_i k_i}U_i .$$
	Now let $j < n_i$.
	If $k_i < n_i$, by induction hypothesis, we have
	$$ V_{ij} \otimes B_{\down{T}} = \delta_{j k_i} W .$$
	Tensoring by $S$ and using \cite[Proposition 4.7]{LR}, yields
	$$ V_{ij} \otimes B = \delta_{j k_i} U_i .$$
	On the other hand, if $k_i = n_i$, then by \cite[Theorem 4.4]{LR}, we have
	$$ T/A_{in_i} = W^{(n)} ,$$
	and tensoring by $B$ and using $TB = A_{i n_i}$ and $A_{i n_i}B = A_{i n_i} $, yields
	$$ T/A_{in_i} \otimes B = TB/A_{i ni} B = 0 .$$
	Therefore, in this case
	$$ V_{ij} \otimes B = 0 ,$$
	which concludes the proof.
\end{proof}

\begin{lemma} \label{ideal-lemma}
	Let $S$ be a bounded HNP ring and
	$$ R = \I_S(\Acal)$$
	for $\Acal = \{ \Acal_1, \ldots, \Acal_t\}$ a multichain of type $U_1, \ldots, U_t$ with
	$$ \Acal_i = \{ A_{in_i} \subset \dots \subset A_{i1}\} \text{.}$$
	Assume that for $i=1,\ldots, s$, the annihilators $\MMM_i = \ann U_i$ are invertible and the inclusions in the chain $\Acal_i$ are minimal  (including $A_{i n_i}$ being a minimal right ideal over $\MMM_i$, and $A_{i1}$ being a maximal ideal), and denote $A_{i0} = S$. Then, for the ideal
	$$ I_S = ( A_{1k_1} \cap  \dots \cap A_{sk_s} ) \,  \MMM^{l_1}_1 \dots \MMM^{l_s}_s \text{,}$$
	where $l_i \in \Z$ and $0 \leq k_i \leq n_i$ (and $A_{i0} = S$), we have
	$$ \p{I} = (k_1 +(n_1+1) l_1) \, U_1 + \dots + (k_s + (n_s + 1)l_s) \, U_s \in \Div(S) .$$
	In particular, for any divisor
	$$D \in \bigoplus_{i=1}^s \Z \cdot U_i \leq \Div(S)$$
	there is a unique fractional ideal $\fracideal{R}{I}{S}$ with $ D = \p{I}$.
\end{lemma}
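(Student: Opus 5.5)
We first compute $\p{I}$ by splitting $I$ into the intersection $B = A_{1k_1}\cap\dots\cap A_{sk_s}$ and the invertible product $M = \MMM_1^{l_1}\cdots\MMM_s^{l_s}$. Since each $\MMM_i$ is invertible (so $O_l(\MMM_i)=O_r(\MMM_i)=S$), $M$ is an invertible fractional $(S,S)$-ideal, and Proposition \ref{mult-tens} gives
\[
\p{I} = \p{B_S}\otimes M + \p{M_S}.
\]
Because $M$ is a product of invertible two-sided ideals, Proposition \ref{mult-tens} applied repeatedly reduces $\p{M}$ to $\sum_i l_i\,\p{\MMM_i}$, where negative powers are handled by $\p{\MMM_i^{-1}}=-\p{\MMM_i}\otimes \MMM_i^{-1}$. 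We then need two facts.

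First, we show $\p{\MMM_i}=(n_i+1)U_i$. Since $S/\MMM_i\cong U_i^{(m)}$, this amounts to $m=n_i+1$. The chain $\MMM_i\subset A_{in_i}\subset\dots\subset A_{i1}\subset S$ has minimal inclusions by hypothesis, so it is a composition series of $S/\MMM_i$ of length $n_i+1$.

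Second, we show that tensoring by $\MMM_i^{\pm1}$ fixes the divisors involved, i.e.\ $U_j\otimes\MMM_i\cong U_j$. For $j\neq i$ we have $\MMM_j+\MMM_i=S$, and $U_j\otimes \MMM_i=\MMM_i/A\MMM_i$ with $U_j=S/A$; we argue this is $U_j$ by checking annihilators. For $j=i$ we use Lemma \ref{inv-ideal}: $U_i\otimes\MMM_i$ is simple with annihilator $\MMM_i^{-1}\MMM_i\MMM_i=\MMM_i$, hence it is $U_i$.

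Consequently $\p{M}=\sum_i (n_i+1)l_iU_i$ and $\p{B}\otimes M=\p{B}$. It remains to compute $\p{B}$. Here $B\subseteq S$, so $\p{B}=\JH{S/B}$. The modules $S/A_{ik}$ for different $i$ involve distinct simple modules $U_i$, hence are comaximal, and the Chinese remainder theorem gives $S/B\cong\bigoplus_i S/A_{ik_i}$. Each $S/A_{ik_i}$ has a composition series $A_{ik_i}\subset\dots\subset A_{i1}\subset S$ of length $k_i$, with all factors $U_i$ since the ideals are isomaximal of type $U_i$. Hence $\p{B}=\sum k_iU_i$, and summing gives the stated formula.

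For the last statement, existence is immediate: given $D=\sum d_iU_i$, write $d_i=k_i+(n_i+1)l_i$ with $0\le k_i\le n_i$ by division with remainder. We must also check that $I$ is a fractional $(R,S)$-ideal, i.e.\ a left $R$-module. Since $R\subseteq \I_S(A_{ik_i})$, each $A_{ik_i}$ is a left $R$-module, and so is $B$; multiplying on the right by two-sided $S$-ideals preserves this. Uniqueness follows directly from Theorem \ref{injectivity-mainthm}, i.e.\ injectivity of $\p{}$ on $\Ical_R(S)$.

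The main obstacle is the annihilator bookkeeping showing $U_j\otimes\MMM_i\cong U_j$ for $j\ne i$. If it resists, the fallback is to note that $\MMM_i$ is invertible, so $S/\MMM_i$-torsion-freeness gives $S/A\otimes \MMM_i\cong \MMM_i/A\MMM_i$, and then to show $\MMM_j$ annihilates it using $\MMM_i\MMM_j=\MMM_j\MMM_i$. The commutation holds because both products equal $\MMM_i\cap\MMM_j$: the two ideals are comaximal and $\MMM_i$, $\MMM_j$ are not successors of each other, their cycles being trivial.
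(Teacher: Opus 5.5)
Your proof is correct. It follows the same skeleton as the paper: write $I=B\cdot M$ with $M=\MMM_1^{l_1}\cdots\MMM_s^{l_s}$, show $\p{B}=\sum k_iU_i$, and show the invertible factor contributes $\sum (n_i+1)l_iU_i$. Both halves, however, are proved by different means.

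\textbf{Computing $\p{B}$.} The paper uses a sandwich, $A_{ik_i}\prod_{j\neq i}\MMM_j\subseteq B\subseteq A_{ik_i}$, together with the fact that $\p{}$ reverses inclusions. That argument needs $\MMM_j\subseteq A_{jk_j}$, and it needs the invertible-part computation already in hand to evaluate the lower bound. Your Chinese remainder argument avoids both. Disjoint composition factors force comaximality, which gives $S/B\cong\bigoplus_i S/A_{ik_i}$. This is more direct and self-contained.

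\textbf{The invertible factor.} The paper cites \cite[Lemma 15.3]{LR} to get $\JH{B/BM}=\JH{S/M}$. That is quick, but as written it only covers $l_i\ge 0$, where $I\subseteq S$. Your use of Proposition~\ref{mult-tens} handles negative exponents uniformly and stays within the paper's own results. The price is that you must verify $U_j\otimes\MMM_i^{\pm1}\cong U_j$, which requires the $\MMM_i$ to commute. The paper elsewhere gets this from \cite[Theorem 22.15]{LR}. Your route through \cite[Proposition 15.7]{LR} also works, provided you note why the cycles are trivial: invertibility gives $\MMM_i^2\neq\MMM_i$, hence $U_i\to U_i$.

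\textbf{Points to tighten.}
\begin{itemize}
\item For $j\neq i$, you need simplicity of $U_j\otimes\MMM_i$ from Lemma~\ref{inv-ideal}. Annihilation by $\MMM_j$ alone only gives $U_j^{(m)}$.
\item The isomorphism $S/A\otimes\MMM_i\cong\MMM_i/A\MMM_i$ comes from flatness, not torsion-freeness.
\end{itemize}

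You also supply two things the paper leaves implicit. One is the composition series showing $\p{\MMM_i}=(n_i+1)U_i$. The other is the ``in particular'' part: division with remainder, the check that $I$ is a left $R$-module, and uniqueness via Theorem~\ref{injectivity-mainthm}.
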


\begin{proof}
	For $B = A_{1k_1} \cap  \dots \cap A_{sk_s}$, we have
	$$  \p{I} = \JH{S/I} =  \p{B} + \JH{B/B\MMM^{l_1}_1 \dots \MMM^{l_s}_s} .$$
	On one hand, $B \subseteq A_{ik_i}$, and therefore,
	$$ \p{B} \geq \p{A_{i k_i}} = k_i  U_i$$
	for every $i = 1,\ldots, s$. On the other hand,
	$$B \supseteq A_{ik_i} \MMM_1 \dots \MMM_{i-1} \MMM_{i+1} \dots \MMM_s ,$$
	and therefore,
	\begin{align*}
		\p{B} &\leq \p{(A_{ik_i} \MMM_1 \dots \MMM_{i-1} \MMM_{i+1} \dots \MMM_s)}\\
		&= l_1  U_1 + \dots + l_{i-1}  U_{i-1} + k_i U_i  + l_{i+1}  U_{i+1} + \dots + l_s  U_s
	\end{align*}
	for $i =1, \ldots, s$.
	Combining the two inequalities yields
	$$ \p{B} = k_1 \cdot U_1 + \dots + k_s U_s .$$
	Since $\MMM_i$ are invertible, by \cite[Lemma 15.3]{LR} and a straightforward induction, we obtain
	\begin{align*}
		\JH{B/B\MMM^{l_1}_1 \dots \MMM^{l_s}_s} &= \JH{S/\MMM^{l_1}_1 \dots \MMM^{l_s}_s} \\
		&= l_1 \p{\MMM_1} + \dots + l_s \p{\MMM_s} \\
		&= l_1 (n_1 + 1)  U_1 + \dots + l_s (n_s + 1) U_s .
	\end{align*}
	This concludes the proof.
\end{proof}

The following (together with Proposition \ref{ideal-lemma}) is a special case of Theorem \ref{mult-mainthm} for the operation $\star$.

\begin{theorem} \label{star-mult}
	Let $R_0$, $R$, and $S$ be bounded HNP rings such that $S \supseteq R \supseteq R_0$ is a chain of finite overrings. Let $V \in \Div(R)$, and $\fracideal{R_0}{I}{S}$ be a fractional ideal. Then, we have
	$$ V \star \p{I} = V \otimes I + \p{I} \text{.}$$
\end{theorem}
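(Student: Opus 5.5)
We need to show $\phi(\p{I})(V) = V \otimes I$ for $V\in\Div(R)$, where $I$ is a fractional $(R_0,S)$-ideal and $V$ is regarded as an $R_0$-divisor. The plan is to reduce to the situation of Lemma \ref{ideal-lemma} and Proposition \ref{module-lemma}, and then compare both sides explicitly.

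\emph{Step 1: reduce to $R = R_0$.} By Lemma \ref{down-lemma}, $\phi(\p{I})(V) = \phi(\p{I})(V_{R_0})$. Since $V \otimes_R I$ means $V_{R_0} \otimes_{R_0} I$, it suffices to prove the statement with $R$ replaced by $R_0$. By additivity we may also take $V$ to be a simple $R_0$-module.

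\emph{Step 2: enlarge $S$ and shrink $R_0$.} Using Lemma \ref{extensions}, it is enough to check equality after tensoring with a suitable finite overring $T \supseteq S$ that trivializes the relevant cycles. Lemma \ref{tensor-lemma} gives
$$\phi(\p{I})(V)\otimes T = \phi(\p{I}\otimes T)(V).$$
By Proposition \ref{mult-tens}, $\p{IT} = \p{I}\otimes T + \p{T}$. Since $T$ is an overring, $\p{T}$ is determined by $\JH{T/S}$-type corrections, and I expect this to reduce to $\p{I}\otimes T = \p{IT}$ once things are arranged properly. Also $V\otimes I\otimes T = V\otimes IT$. So we may replace $S$ by $T$ and $I$ by $IT$. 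We choose $T$ so that every cycle of $S$ meeting the support becomes trivial and its maximal ideals $\MMM_i$ are invertible, as happens for a trivial cycle. On the $R_0$ side we pass to a smaller finite subring $R_1 \subseteq R_0$ using Lemma \ref{down-lemma} again, replacing $I$ by the same set viewed as an $(R_1,T)$-ideal. We choose $R_1 = \I_T(\Acal)$ to be a multichain idealizer with minimal chains, as in Lemma \ref{ideal-lemma}, small enough that $R_1 \subseteq O_l(I)$; this is possible by Remark \ref{remark}.

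\emph{Step 3: the explicit computation.} By Lemma \ref{ideal-lemma} and Theorem \ref{injectivity-mainthm}, $I = (A_{1k_1}\cap\dots\cap A_{sk_s})\MMM_1^{l_1}\cdots\MMM_s^{l_s}$ with
$$\p{I} = \sum_i \bigl(k_i + (n_i+1)l_i\bigr)U_i.$$
Tensoring a simple $V_{ij}$ with $I$: the two-sided $\MMM_i$ factors act invertibly, so by Lemma \ref{inv-ideal} they map $U_i$ to $U_i$ (in a trivial cycle $U_i$ is its own successor). Proposition \ref{module-lemma} then gives
$$V_{ij}\otimes I = \delta_{jk_i}U_i.$$
On the other side, Definition \ref{def} with $\rho(S,U_i) = n_i+1$ and $\rho(R_1,V_{ij}) = 1$ (from minimal inclusions, via \cite[Theorem 32.19]{LR}) gives $\overline{k} = k_i$. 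So $\phi(\p{I})(V_{ij}) = U_i$ exactly when $j = k_i$, and the two sides agree. Simples not in these cycles give $0$ on both sides.

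The main obstacle is Step 2. We must make sure that replacing $S$ by $T$ and $I$ by $IT$ keeps the divisor bookkeeping consistent, that a subring $R_1 \subseteq O_l(I)$ meeting the hypotheses of Lemma \ref{ideal-lemma} exists, and that the indexing of $V_{ij}$ in Definition \ref{def} matches the ordering in Proposition \ref{module-lemma}. I would first settle the index matching, then handle the overring replacement via Lemmas \ref{tensor-lemma} and \ref{extensions}.
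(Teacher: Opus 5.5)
Your overall route matches the paper's. You reduce to a simple $V$ with Lemma \ref{down-lemma} and pass to a larger overring $T$ via Lemmas \ref{tensor-lemma} and \ref{extensions}. You then refine the multichain presenting $R_0=\I_T(\Acal)$ to finest chains at the now-trivial types, identify $I$ by Lemma \ref{ideal-lemma}, and compute with Proposition \ref{module-lemma}. You push Lemma \ref{down-lemma} all the way to the refined ring $R_1$, so that $\rho(R_1,V_{ij})=1$ and $\overline{k}=k_i$. That is a harmless, slightly tidier variant of the paper's summation over the pieces $V^l_{ij}$.

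Two of your stated obstacles are not real:
\begin{itemize}
\item $\p{T_T}=0$, so Proposition \ref{mult-tens} gives $\p{IT}=\p{I}\otimes T$ at once.
\item The indexing matches automatically. When the $T$-cycle of $U_i$ is trivial, the full $R_1$-cycle of $V_{i0}$ is exactly $V_{in_i}\rightarrow\dots\rightarrow V_{i0}$, labelled as in Definition \ref{def}.
\end{itemize}

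There is, however, a false step. Simples outside the cycles of $U_1,\dots,U_s$ do \emph{not} give $0$ on both sides, because $\phi(0\cdot U)$ is not the zero map. Let $W$ be a simple $R_1$-module not killed by $T$ such that $W\otimes T$ has coefficient $0$ in $\p{I}$. Then Definition \ref{def} gives $\phi(\p{I})(W)=W\otimes T\neq 0$. Already for $I=T$ (so $s=0$), both sides of the theorem equal $V\otimes T$, which is nonzero whenever $T$ does not kill $V$.

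So you must show $W\otimes I=W\otimes T$ for every such $W$, and $W\otimes I=0$ for the killed ones outside these cycles. This follows from Proposition \ref{module-lemma} with $k=0$ at that type (an empty or unrefined chain), which gives $W\otimes(A_{1k_1}\cap\dots\cap A_{sk_s})=W\otimes T$. Tensoring with the $\MMM_j^{l_j}$ then fixes this module. Alternatively, choose $T$ so that it also trivializes the cycle met by $V$, and include that cycle among the refined types with coefficient zero.

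Relatedly, $U\otimes\MMM_j\cong U$ does not follow from Lemma \ref{inv-ideal} alone, which only says the image is simple and that the predecessor relation is preserved. You need two further facts. First, the annihilator of $U\otimes\MMM_j$ is $\MMM_j^{-1}(\ann U)\MMM_j$, as in the proof of that lemma. Second, $\MMM_j$ commutes with $\ann U$: use \cite[Theorem 22.15]{LR} when both are invertible, and \cite[Proposition 15.7]{LR} when $U$ lies in a different cycle.
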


\begin{proof}
	Since both $\phi(\p{I})$ and $\otimes I$ are additive, and considering Lemma \ref{down-lemma}, we may assume that $V$ is a simple $R_0$-module. For simplicity, let us write $R$ for $R_0$ (as the ring $R$ does not play a role anymore).
	
	First, consider the case where $\p{I_S}$ and $\JH{V \otimes_{R} I}$ involves only modules $U_1, \ldots, U_s$ with trivial cycles. Let
	$$ R= \I_S(\Acal)$$
	for $\Acal = \{ \Acal_1, \ldots, \Acal_t\}$ a multichain of type $U_1, \ldots, U_t$, $t\geq s$, with
	$$ \Acal_i = \{ A_{in_i} \subset \dots \subset A_{i1}\} \text{.}$$
	For $i = 1, \ldots, s$, let $\Acal'_i$ be any finest chain containing $\Acal_i$, say
	\begin{align*}A^{r_{in_i}-1}_{in_i} \subset \dots \subset A^{0}_{in_i} \subset
		\dots \subset A^{r_{i1} -1}_{i1} \subset \dots  \subset A^0_{i,n_i-1} \subset  A^{r_{i0}-1}_{i0} \subset \dots \subset A^{1}_{i0} ,\end{align*}
	where $A^0_{ij} = A_{ij}$, the right $S$-ideal $A^{r_{in_i}-1}_{in_i}$ is minimal over $\ann U_i$ and $A^{1}_{i0}$ is a maximal ideal. By \cite[Proposition 9.4]{LR}, we have
	$$ r_{ij} = \rho(R, V_{ij}) .$$
	Consider the right finite subring
	$$ R_0 = \I_S(\Acal'_1, \ldots, \Acal'_s, \Acal_{s+1}, \ldots, \Acal_t) \subseteq R$$
	and label by
	$$ V^{r_{in_i}-1}_{in_i} \rightarrow \ldots \rightarrow V^{1}_{i0} \rightarrow V^{0}_{i0}$$
	the Jordan-Hölder decomposition of $U_i$ over $R_0$ (\cite[Theorem 20.2]{LR} and \cite[Proposition 9.4]{LR}).
	Since $I$ is an $(R,S)$-fractional ideal, it is also an $(R_0, S)$-fractional ideal, and by Lemma \ref{ideal-lemma}, we have
	$$ \fracideal{R_0}{I}{S} =  A^{m_1}_{1k_1} \cap  \dots \cap A^{m_s}_{sk_s} \, \MMM^{l_1}_1 \dots \MMM^{l_s}_s$$
	for some $l_j, k_j,m_j$ such that
	\begin{align} \label{eq-m}
		\p{I}(U_i) = m_i + \sum_{j=0}^{k_i-1} r_{ij} +  r_i l_i . 
	\end{align}
	%Since tensoring over $R$ is the same as tensoring over $R_0$ and,
	By \cite[Theorem 20.2]{LR}, we have
	\begin{align} \label{vij-eq}
		[V_{ij}]_{R_0} = \sum_{l=0}^{r_{ij}-1} V^{l}_{ij} ,
	\end{align}
	and therefore,
	$$V_{ij} \otimes I = \sum_{l=0}^{r_{ij}-1} V^{l}_{ij} \otimes ( A^{m_1}_{1k_1} \cap  \dots \cap A^{m_1}_{sk_s}) \,  \MMM^{l_1}_1 \dots \MMM^{l_s}_s .$$
	By Lemma \ref{module-lemma}, we have
	$$ V^{l}_{ij} \otimes ( A^{m_1}_{1k_1} \cap  \dots \cap A^{m_1}_{sk_s} ) =  \begin{cases} U_i & l = m_i, \,\, j = k_i \\ 0 & \text{otherwise.} \end{cases} $$
	For an invertible ideal $\MMM \lhd S$, by Lemma \ref{inv-ideal}, the $S$-module $U_i \otimes \MMM$ is simple, and since $\MMM_i = \ann U_i$ is invertible by assumption and invertible ideals in an HNP ring commute \cite[Theorem 22.15]{LR}, the maximal ideal $\MMM_i$ annihilates $U_i \otimes \MMM$, and therefore, $U_i \otimes \MMM \cong U_i$. Hence, we have
	$$ V_{ij} \otimes I  = \begin{cases}  U_i & j = k_i \\ 0 & \text{otherwise.} \end{cases}  $$
	Considering \eqref{eq-m}, $j = k_i$ is equivalent to
	$$  \sum_{l=0}^{j-1} r_{il} \leq k'<  \sum_{l=0}^{j} r_{il} ,$$
	where $k' \in \{0, 1, \ldots, r-1\}$ is such that
	$$  \p{I}(U_i) = k'  \,\, (\modulo \, r) .$$
	This proves the theorem in this special case.
	
	Now, consider an arbitrary case, i.e., $S$ is any right finite overring of $R$. Let $T$ be an overring of $S$ that trivializes the cycles of all simple modules appearing in $\phi(\p{I})(V)$ and $V \otimes I$. Clearly, we have
	$$ (V \otimes I) \otimes T= V \otimes I T  \text{,}$$
	and by Lemma \ref{tensor-lemma}, also
	$$\phi(\p{I_S})(V) \otimes T = \phi(\p{I} \otimes T)(V) =  \phi(\p{I T})(V) .$$
	Therefore, by the just proven special case (for the ring $T$ and the ideal $IT$), we have
	$$ \phi(\p{I})(V) \otimes T =( V \otimes I ) \otimes T .$$
	Applying Lemma \ref{extensions} concludes this proof.
\end{proof}

\section{General Case} \label{sec-gen}

%\begin{definition} \label{admissible-ring}
%	Let $R$ be an HNP ring and $U_1, \ldots, U_n$ simple $R$-modules contained in different cycles.
%	We say that an HNP ring $S$ is \emph{admissible} with respect to $U_1, \ldots, U_n$ if there is a fractional ideal $\fracideal{R}{K}{S}$ (which we will call an \emph{admissible ideal}) with $O_r(K) = S$ such that $\p{K}$ consists only of simple modules with trivial cycles and $U_i \otimes K$ is a simple $S$-module with a trivial cycle for any $i = 1,\ldots,n$.
%\end{definition}
%
%\begin{remark}
%	If $R$ has only finitely many non-trivial cycles of simple modules and for every such cycle there is a module among $U_1, \ldots, U_n$ contained in it, then an overring $S \supseteq R$ is admissible w.r.t.\ $U_1, \ldots, U_n$ if and only if it is a Dedekind prime ring. However, if $R$ has infinitely many non-trivial cycles of simple modules, there are no right finite overrings that are also Dedekind prime rings. In that case, the notion of an admissible overring is a suitable replacement. This definition also addresses rings $S$ that are not overrings of $R$.
%\end{remark}
%
%\begin{remark}
%	An overring $\overline{R} \supseteq R$ is admissible w.r.t. $U_1, \ldots, U_n$ in the sense of Definition \ref{admissible-overring} if and only if it is admissible w.r.t. $U_1, \ldots, U_n$ in the sense of Definition \ref{admissible-ring} (take $K = \overline{R}$).
%\end{remark}

This section aims to extend the results of the previous section to arbitrary two connected bounded HNP rings $R$ and $S$, not necessarily comparable by inclusion. For $V \in \Div(R)$ and $D \in \Div(S)$, the definition of $V \circ D$ is somewhat technical and is given later in this section (Definition \ref{circ-def}). However, we can use the following theorem to determine $V \circ D$; using a fractional ideal $\fracideal{R}{K}{S}$, which enables us to compare structures of simple modules between $R$ and $S$, we can obtain information about the tensor product $(V \circ D) \otimes O_r(K)$. To completely determine $V \circ D$, one must consider several different fractional $(R,S)$-ideals.

\begin{theorem} \label{general-mainthm}
	Let $R$ and $S$ be bounded HNP rings, and let $\fracideal{R}{K}{S}$ be a fractional ideal. For $V \in \Div(R)$ and $D \in \Div(S)$, we have
	$$ (V \circ D) \otimes {O_r(K)} = \phi(D \otimes K^{-1} + \p{K^{-1}}) (V) \otimes K + D \otimes O_r(K) .$$
\end{theorem}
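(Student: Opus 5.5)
The plan is to reduce everything to the overring case of Section~\ref{sec-overring}, where $\star$ can be computed through actual fractional ideals by Theorem~\ref{star-mult}. Definition~\ref{circ-def} presumably fixes $V\circ D$ through one chosen ideal $\fracideal{R}{K_0}{S}$, or a family of them. The content of the theorem is then that the displayed formula holds for \emph{every} fractional $(R,S)$-ideal $K$, i.e.\ that the right-hand side does not depend on $K$ beyond the change of ring $\otimes O_r(K)$. Write $T=O_r(K)$ and $R'=O_l(K)$. Then $T\supseteq S$ and $R'\supseteq R$ are finite overrings, and $K$ is an invertible $(R',T)$-ideal with inverse $K^{-1}$.

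First, I would interpret the right-hand side. The divisor $E=D\otimes K^{-1}+\p{K^{-1}}$ is a divisor on the right order $R'$ of $K^{-1}$, by Proposition~\ref{mult-tens}, and $R'$ is a finite overring of $R$. So $\phi(E)(V)\in\Div(R')$, and by Lemma~\ref{inv-ideal} tensoring with the invertible ideal $K$ carries it bijectively to $\Div(T)$. Second, I would treat the case where $D=\p{I}$ for some fractional $(R_0,S)$-ideal $I$ with $R_0\subseteq R$ small enough. By Lemma~\ref{ideal-lemma} and Proposition~\ref{module-lemma}, after enlarging $S$ and shrinking $R$, every relevant divisor is of this form. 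Then $E=\p{K^{-1}}\otimes I$-type expressions reduce, via Proposition~\ref{mult-tens}, to
\[
E=\p{(K^{-1}I)}\otimes\text{(change of ring)} .
\]
By Theorem~\ref{star-mult},
\[
\phi(E)(V)=V\otimes K^{-1}I-\text{(terms)},
\]
and tensoring with $K$ turns $V\otimes K^{-1}I\otimes K$-type terms into $V\otimes I\otimes T$, using $K^{-1}K=T$. Thus the right-hand side becomes $(V\otimes I+\p{I})\otimes T$. This expression is visibly independent of $K$, and it agrees with the value obtained from the defining ideal $K_0$.

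Third, I would pass from this special case to arbitrary $D$. Choose a finite overring $\overline T\supseteq T$ that trivializes the cycles of all simple modules occurring on both sides. Lemma~\ref{tensor-lemma} lets me move $\otimes\overline T$ inside $\phi$, and Lemma~\ref{down-lemma} lets me replace $V$ by $V_{R_0}$ for a suitable multichain idealizer $R_0$. Over $\overline T$, every divisor supported on trivial cycles is $\p{I}$ for a unique ideal, by Lemma~\ref{ideal-lemma}, so the second step applies. Lemma~\ref{extensions} then lifts the equality from $\overline T$ back to $T$.

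The main obstacle is the second step. It requires checking carefully that $\phi$, which is defined only for overrings, interacts correctly with conjugation by the invertible ideal $K$. Concretely, this means that $\phi(E\otimes K)(V\otimes K)=\phi(E)(V)\otimes K$ for invertible $K$. I would try to prove this first, using Lemma~\ref{inv-ideal}, which shows that $\otimes K$ preserves cycles and the successor relation. One also needs that $\rho$ is preserved under $\otimes K$, for which I would cite Morita invariance of $\rho$ in \cite{LR}.
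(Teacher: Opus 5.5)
In the paper this statement needs no separate argument. By Definition~\ref{circ-def}, $V\circ D$ is the unique divisor $E$ of Theorem~\ref{circ-def-thm}, so $E\otimes O_r(K)=\big(V\star(D\otimes K^{-1}+\p{K^{-1}})\big)\otimes K+\p{K}$ holds for \emph{every} $K$. Expanding $V\star E'=\phi(E')(V)+E'$ gives the formula, once you use $D\otimes K^{-1}\otimes K=D\otimes O_r(K)$ and $\p{K^{-1}}\otimes K+\p{K}=\p{K^{-1}K}=0$ in $\Div(O_r(K))$ (Proposition~\ref{mult-tens}).

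So the substance you are after, a single divisor in $\Div(S)$ serving all $K$ at once, is Theorem~\ref{circ-def-thm}, and your second and third steps follow the second half of its proof. Your second step is correct when $D=\p{I}$ already holds over $S$ for some $\fracideal{R_0}{I}{S}$. The relevant ideal is $IK^{-1}$, not $K^{-1}I$, and Theorem~\ref{star-mult} gives $\phi(\p{IK^{-1}})(V)=V\otimes IK^{-1}$ with no correction terms. Hence $E=V\otimes I+\p{I}$ works for all $K$.

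The reduction to this case has two gaps. First, Lemma~\ref{ideal-lemma} presupposes $R_0=\I_{\overline T}(\Acal)\subseteq\overline T$, while your second step needs $R_0\subseteq R$. For incomparable $R,S$ no such common ring need exist. For example, take $R=M_2(\Delta)$ and $S=t^2M_2(\Delta)t^{-2}$ with $t$ as in Example~\ref{example-2}: $R\cap S$ contains no hereditary order, and $S$ has no proper finite overring. The paper obtains $D\otimes\overline T=\p{I}$ from Lemma~\ref{R0-exp}, applied to $L=K\overline T$ (so $O_r(L)=\overline T$). This requires $\overline T$ to trivialize also the cycles occurring in $\p{K}$. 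The paper then transfers the right-hand side from $K$ to $L$ via \eqref{K-bar}, using $K^{-1}O_l(L)=L^{-1}$ and Lemma~\ref{tensor-lemma} on the left-order side. You cannot move $\otimes\overline T$ inside $\phi$ directly, because $\phi$ lives over $O_l(K)$ while $\overline T$ is an overring of $O_r(K)$.

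Second, and more seriously, your phrase ``it agrees with the value obtained from the defining ideal $K_0$'' is the whole difficulty, and it is not argued. A single $K_0$ only determines $(V\circ D)\otimes O_r(K_0)$, which forgets the coefficients at the simple $S$-modules killed by $O_r(K_0)$. Moreover, right orders of different ideals need not have a common finite overring: in Example~\ref{example-2}, $M_2(\Delta)$ and $tM_2(\Delta)t^{-1}$ together kill the whole cycle. So the identities you get for different $K$, namely that the right-hand side tensored with $\overline T_K$ equals $V\otimes I_K+\p{I_K}$, live over incomparable rings. They cannot be compared by tensoring further.

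The missing idea is the paper's coefficientwise construction of $E$. For each simple $S$-module $U$, take the coefficient at $U\otimes T$ of $V\otimes I+D\otimes T$, where $T$ is an overring as in Lemma~\ref{trans-ideal-exists} that keeps $U$ alive and trivializes its cycle. Then show this coefficient is independent of $T$ and $I$ via Lemma~\ref{same-mult}. That lemma rests on Theorem~\ref{star-mult} and on the fact that the coefficient of $\phi(D')(V)$ at a module depends only on the coefficient of $D'$ there.

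Finally, the obstacle you single out never arises: you do not need compatibility of $\phi$ with conjugation by invertible ideals, nor invariance of $\rho$. Theorem~\ref{star-mult} already absorbs this, and afterwards only associativity of tensor products is used.
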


%\begin{mainexample}
%	We say that an HNP ring $R$ is a \emph{Dedekind prime ring} if every fractional right ideal is invertible. The cycle structure of simple modules of a bounded Dedekind prime ring is completely trivial, i.e., every cycle consists only of a single simple module, which is its own successor. (This corresponds to the fact that Dedekind prime rings do not have finite extensions.) Therefore, it is clear that in Definition \ref{def} where $R$ is a Dedekind prime ring (and thus, necessarily, $S = R$), we have, for any $V, D \in \Div(R)$,
%	$$\phi(D)(V) = V.$$
%	
%	If $S$ is another Dedekind prime ring, and $\fracideal{R}{K}{S}$ is a fractional ideal, we have $O_r(K) = S$, and therefore, using Theorem \ref{general-mainthm},
%	$$V \circ D = V \otimes K + D .$$
%	In fact, tensoring by $K$ gives a canonical identification between the simple modules of $R$ and $S$ (it is independent of the choice of the fractional ideal), so composition $\circ$ is essentially just addition. 
%	%The theory of divisors therefore extends the very straightforward theory in Dedekind prime rings. 
%\end{mainexample}

We also point out a connection between divisors and ideals, which essentially states that divisors can be described by a collection of fractional ideals, and describes how multiplication of divisors works using these ideals.

\begin{theorem} \label{mult-mainthm}
	Let $R$ and $S$ be bounded HNP rings, let $V \in \Div(R)$ and $D \in \Div(S)$.
	Let $\overline{S} \supseteq S$ be any finite overring. Then, there is a finite subring $R_0 \subseteq R$, a finite overring $T \supseteq \overline{S}$, and a fractional ideal $\fracideal{R_0}{I}{T}$ such that
	$$D \otimes T = \p{I_T} $$
	and we have
	$$ (V \circ D) \otimes T = V \otimes_{R_0} I + D \otimes T \text{.}$$
\end{theorem}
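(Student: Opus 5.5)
We will derive Theorem \ref{mult-mainthm} from Theorem \ref{general-mainthm}, combined with the overring results of Section \ref{sec-overring}, namely Lemma \ref{ideal-lemma} and Theorem \ref{star-mult}. Since $R$ and $S$ are connected, we first fix a fractional ideal $\fracideal{R}{K}{S}$. We may replace $K$ by $K\overline{S}$, which is a fractional $(R,\overline{S})$-ideal with right order a finite overring $S_1=O_r(K\overline{S}) \supseteq \overline{S}$. We then enlarge $S_1$ to a finite overring $T$ that trivializes the cycles of every simple module appearing in the relevant divisors. These are $D\otimes T$, $\p{K^{-1}}\otimes T$, and the $T$-modules $\phi(\cdot)(V)\otimes K T$. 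At the same time we arrange that the annihilators of the surviving simple $T$-modules in those cycles are invertible, so that every cycle we care about is trivial in $T$. This is possible because an overring that trivializes a cycle leaves a simple module whose cycle is trivial, and in a bounded HNP ring a maximal ideal with trivial cycle is invertible.

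Next we produce $R_0$ and $I$. We write $T=O_r(KT)$ and consider the ring $O_l(KT)$, which is a finite overring of $R$. By Remark \ref{remark}, $R$ is a multichain idealizer $\I_{O_l(KT)}(\Acal)$. We refine these chains into finest chains, as in the proof of Theorem \ref{star-mult}, to obtain a finite subring $R_0\subseteq R$. Transported through the invertible ideal $KT$ (Lemma \ref{inv-ideal}), Lemma \ref{ideal-lemma} applies to the pair $(R_0,T)$. It says that every divisor supported on the trivial-cycle simple $T$-modules is $\p{I}$ for a unique fractional $(R_0,T)$-ideal $I$. The ideal is produced concretely as $K T$ times an ideal of the form appearing in Lemma \ref{ideal-lemma}, and its divisor is adjusted by Proposition \ref{mult-tens}. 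We choose $I$ so that $\p{I_T}=D\otimes T$.

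It remains to compute $(V\circ D)\otimes T$. Since $T\supseteq O_r(I)$, Theorem \ref{general-mainthm} applied to the ideal $\fracideal{R_0}{I}{T}$, together with compatibility of $\circ$ with tensoring by $T$ and with restriction to $R_0$ (the condition $V_R\circ 0_{R_0}=V_{R_0}$), gives
$$(V\circ D)\otimes T = \phi\big(D\otimes T\otimes I^{-1}+\p{I^{-1}}\big)(V)\otimes I + D\otimes T.$$
By Proposition \ref{mult-tens}, $\p{I I^{-1}} = \p{I}\otimes I^{-1}+\p{I^{-1}}$, and $II^{-1}=O_l(I)$. Hence the argument of $\phi$ is $\p{O_l(I)}$, viewed as a divisor over the finite overring $O_l(I)$ of $R_0$. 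Theorem \ref{star-mult}, applied to the chain $O_l(I)\supseteq R_0$ with the ideal $O_l(I)$ itself, then shows that $\phi(\p{O_l(I)})(V)= V\otimes O_l(I)$, using Lemma \ref{down-lemma} to pass to $R_0$. Tensoring with $I$ gives $V\otimes_{R_0} I$, which is the claimed formula.

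The main obstacle is the middle step: guaranteeing that one choice of $T$ and $R_0$ makes every relevant simple module have trivial cycle, so that Lemma \ref{ideal-lemma} realizes $D\otimes T$ as $\p{I}$ with $I$ an $(R_0,T)$-ideal. The difficulty is that the support of $D\otimes T$ must be finite and avoid killed modules, and that $R_0$ must be refined enough. We would handle this first by working inside the Dedekind-like localization at finitely many cycles, reducing via Lemma \ref{extensions} if necessary.
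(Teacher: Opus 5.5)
\textbf{Gap 1: the restriction from $R$ to $R_0$ is circular as written.} The existence of $R_0$, $T$ and $I$ is not the weak point; your first two steps re-derive Lemmas \ref{trans-ideal-exists} and \ref{R0-exp}, which the paper already has. The problem is your third step. Theorem \ref{general-mainthm} needs an ideal from the ring carrying the left divisor to the ring carrying the right one. But $I$ is only an $(R_0,T)$-ideal while $V\in\Div(R)$, so what you actually compute is $V_{R_0}\circ(D\otimes T)$. You therefore need $V\circ(D\otimes T)=V_{R_0}\circ(D\otimes T)$, and you justify it by the condition $V_R\circ 0_{R_0}=V_{R_0}$. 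In the paper that condition is \emph{deduced from} Theorem \ref{mult-mainthm}. Even granting it, rewriting $V\circ E$ as $(V\circ 0_{R_0})\circ E$ uses $0_{R_0}\circ E=E$ and associativity, and associativity is proved in Section \ref{sec-cat} by means of Theorem \ref{mult-mainthm}.

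The step can be repaired directly. Every $(R,T)$-ideal $K$ is also an $(R_0,T)$-ideal, with the same $O_l(K)$, $K^{-1}$, $\p{K}$ and $\p{K^{-1}}$. Lemma \ref{down-lemma} gives $\phi(F)(V)=\phi(F)(V_{R_0})$ for $F\in\Div(O_l(K))$. So by Theorem \ref{circ-def-thm}, $V\circ E$ and $V_{R_0}\circ E$ agree after tensoring with $O_r(K)$ for every such $K$, and Lemma \ref{extensions} gives equality. The paper avoids the issue entirely. It tests $V\circ\p{I}$ against an arbitrary $(R,T)$-ideal $K$ and rewrites the argument of $\star$ as $\p{IK^{-1}}$ by Proposition \ref{mult-tens}. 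It then applies Theorem \ref{star-mult} to the $(R_0,O_l(K))$-ideal $IK^{-1}$ along $O_l(K)\supseteq R\supseteq R_0$; that theorem already absorbs the passage from $R$ to $R_0$. Lemma \ref{extensions} finishes.

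\textbf{Gap 2: you need $O_r(I)=T$.} Theorem \ref{general-mainthm} only gives the identity after tensoring with $O_r(I)$, and $O_r(I)\supseteq T$ always holds. So the equality $O_r(I)=T$ is what must be shown; your ``$T\supseteq O_r(I)$'' is exactly the part that needs proof. It does hold for the ideal $I=J\cdot KT$ of Lemma \ref{R0-exp} once $T=O_r(KT)$. The ideal $J$ from Lemma \ref{ideal-lemma} is a generative right ideal $B$ times an invertible two-sided ideal. Since $SB=S$, we get $O_r(B)\subseteq S$: if $Bx\subseteq B$ then $Sx=SBx\subseteq S$. Hence $O_r(J)=O_l(KT)$ and $O_r(J\,KT)=(KT)^{-1}(KT)=T$. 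This has to be stated; otherwise you must control supports and invoke Lemma \ref{extensions}.

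With both points fixed, your route is a genuine variant of the paper's. Using $I$ itself as the connecting ideal makes the argument of $\phi$ collapse to $\p{O_l(I)}=0$, and $\phi(0)(V_{R_0})=V\otimes_{R_0}O_l(I)$ follows directly from Definition \ref{def}. This is slicker than working with an auxiliary $K$, but it costs the separate restriction argument above, which the paper gets for free from Theorem \ref{star-mult}.
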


%What follows is dedicated to proving the theorems stated in this section.

We will prove Theorems \ref{general-mainthm} and \ref{mult-mainthm} later in this section. Before proceeding we need to establish a somewhat general result about connected HNP rings.
Lemma \ref{inv-ideal} showed that if there is an invertible ideal between two bounded HNP rings, then we have a bijection between the simple modules, which preserves the cycle structure. This is no longer true if HNP rings are not connected by an invertible fractional ideal (e.g.\ take any HNP ring an its finite overring), but nevertheless, we still have a bijection between the cycles themselves.

Namely, let $R$ and $S$ be two connected bounded HNP rings. We denote by $\T_R$ the set of all cycles of simple $R$-modules. If we take a cycle $C \in \T_R$, and a fractional ideal $\fracideal{R}{I}{S}$, since $O_l(I)$ is a finite extension of $R$, there is a simple modules $U \in C$ such that $U \otimes I \neq 0$. In fact, we claim that the divisor $U \otimes I \in \Div(S)$ consists of distinct simple $S$-modules $V_i$ with
\begin{align} \label{V-arrows}
	V_n \rightarrow \dots \rightarrow V_1 \rightarrow V_0.
\end{align}
Indeed, $U \otimes O_l(I)$ is a simple $O_l(I)$-module. Since $I$ is an invertible $(O_l(I), O_r(I))$-ideal, Lemma \ref{inv-ideal} shows that the module $U \otimes I = U \otimes O_l(I) \otimes I$ is a simple $O_r(I)$-module. Since $S$ is a finite HNP subring of $O_r(I)$, applying \cite[Theorem 20.2]{LR}
%and LR Theorem 6.4
(and considering Remark \ref{remark}) proves the claim.

In particular, modules $V_i$ all belong to the same cycle $C_{U, I} \in \T_S$; this is the cycle, corresponding to $C$. Apart from the choice of $C$, the cycle $C_{U, I}$ seems to depends both on the choice of the fractional ideal $I$ and the simple module $U$, however, the following proposition shows that this actually not the case, i.e., the cycle $C_{U, I}$ only depends on $C$. Therefore, this gives us a well-defined map from $\T_R$ to $\T_S$, which is easily seen to be a bijection (tensor with the ideal $I^{-1}$ to obtain the map in the other direction).

\begin{proposition} \label{ideal-asoc}
		For any two bounded connected HNP rings $R$ and $S$, there is a canonical bijection between their cycles of simple modules, i.e., the cycle $C_{ U, I}$ described above does not depend on the choice of $U$ (from the original cycle of $R$-modules) or the ideal $I$.
\end{proposition}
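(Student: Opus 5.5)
We have to show that the cycle $C_{U,I}$ is independent of two choices: the module $U \in C$ with $U \otimes I \neq 0$, and the fractional ideal $\fracideal{R}{I}{S}$. We treat them in turn.

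\emph{Independence of $U$ for fixed $I$.} Put $R' = O_l(I)$, a finite overring of $R$. Since $R'$ is finite, it does not kill the whole cycle $C$. Each $U \in C$ with $U \otimes I \neq 0$ satisfies $U \otimes R' \neq 0$, and then $U \otimes I = (U \otimes R') \otimes I$. By \cite[Theorem 20.2]{LR} (with Remark \ref{remark}), the modules of $C$ not killed by $R'$ map onto a single cycle $C'$ of simple $R'$-modules. By Lemma \ref{inv-ideal}, tensoring with the invertible $(R', O_r(I))$-ideal $I$ sends $C'$ bijectively onto one cycle $C''$ of simple $O_r(I)$-modules, because it preserves predecessors. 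Finally, restricting to the finite subring $S \subseteq O_r(I)$, the Jordan-Hölder factors of all modules in $C''$ lie in one cycle of $S$, again by \cite[Theorem 20.2]{LR}. So $C_{U,I}$ does not depend on $U$.

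\emph{Independence of $I$.} Let $I$ and $J$ be two fractional $(R,S)$-ideals. We reduce to comparable ideals. First, $I \cap J$ is again a fractional $(R,S)$-ideal, and it is contained in both. Second, for any $a \in Q$ that is not a zero-divisor with $aI \subseteq R$, the ideal $I$ can be scaled into one containing the other, but the two-sided version of this is cleaner: since $I$ and $J$ are both fractional, some nonzero two-sided ideal $\AAA$ of $R$ satisfies $\AAA J \subseteq I$. So it suffices to compare (a) $I$ with a fractional ideal $L \subseteq I$, and (b) $J$ with $\AAA J$.

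\emph{Case (a).} Refine $L \subseteq I$ into a chain of minimal inclusions $L = L_0 \subset \dots \subset L_m = I$ of fractional $(R,S)$-ideals. Take one step $L_{i-1} \subset L_i$. We have $\JH{U \otimes L_i} - \JH{U \otimes L_{i-1}}$, and the connecting term comes from $U \otimes (L_i/L_{i-1})$, which is a quotient of $U \otimes L_i$ and lies over $R$. The quotient $L_i/L_{i-1}$ is isotypic as a right $S$-module by Lemma \ref{min-inc}. The cleanest route is to use Proposition \ref{mult-tens}: write $L_{i-1} = (L_{i-1}L_i^{-1}) L_i$, where $L_{i-1}L_i^{-1}$ is a fractional $(R, O_l(L_i))$-ideal contained in $O_l(L_i)$. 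This reduces the comparison to the overring situation: for a fractional ideal $X$ with $O_l(X)$ a finite overring $R_1$ of $R$, and a right $R_1$-ideal $Y \subseteq R_1$, we must show $U \otimes YX$ and $U \otimes X$ land in the same cycle. Now $U \otimes YX = (U \otimes Y) \otimes X$. The module $U \otimes Y$ is a simple (or zero) $R_1$-module, and it lies in the same $R_1$-cycle as $U \otimes R_1$. This last fact is the overring case of the proposition, and it follows from the first paragraph applied with $S = R_1$ and $I = Y$: the $R_1$-modules in $\JH{U \otimes Y}$ form a path inside one cycle. Knowing which cycle requires the further fact that $Y$ and $R_1$ give the same cycle, which we get from $R_1/Y$ having finite length and from a simple-module count using \cite[Corollary 27.10]{LR}.

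\emph{Case (b).} Here $\AAA J \otimes_R$ compares to $J$ through the $R$-bimodule $R/\AAA$, and $U \otimes \AAA$ lies in the cycle of $U$. This holds because $\AAA$ is a product of maximal ideals and idempotents, and by \cite[Proposition 15.7]{LR} $U \otimes \MMM$ is either zero or a module in the cycle of $U$.

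The main obstacle is Case (a): showing that multiplying by an integral one-sided ideal never moves a simple module to a different cycle. First I would try a length argument. $U \otimes (X/YX)$ is built from $U \otimes_{R_1} (R_1/Y)$, whose composition factors lie in $U$'s cycle by Lemma \ref{min-inc} and the predecessor description, and I would use this to force all the Jordan-Hölder factors into a single cycle.
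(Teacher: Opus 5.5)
Your first step, independence of $U$ for fixed $I$, is fine and is the paper's argument. The gap is independence of $I$. That is the substantive half, and, as you acknowledge, the proposal does not actually prove it.

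\emph{Case (a).} The identity $L_{i-1}=(L_{i-1}L_i^{-1})L_i$ is false in general. The right-hand side equals $L_{i-1}O_r(L_i)$, and $O_r(L_i)\subseteq O_r(L_{i-1})$ need not hold. More seriously, the ``overring case'' you reduce to, that $U\otimes Y$ lands in the cycle of $U\otimes R_1$, is exactly the nontrivial point. The paper needs Theorem \ref{star-mult} for the analogous claim that $V\otimes K$ stays in the cycle of $V$ for the $(S,S)$-ideal $K=J^{-1}I$. Neither the finite length of $R_1/Y$ nor \cite[Corollary 27.10]{LR}, which only says a finite overring kills no whole cycle, tells you which cycle you land in.

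\emph{The closing length argument.} It controls only the cokernel $U\otimes(X/YX)$ of $U\otimes YX\to U\otimes X$. The kernel $\operatorname{Tor}_1^R(U,X/YX)$ is never addressed. Nor is the case where $U$ survives one of the two left orders but is killed by the other, in which case comparing via $U$ says nothing.

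\emph{Case (b).} This is unsupported. That $U\otimes\MMM$ lies in the cycle of $U$ is itself an instance of the proposition, for the $(R,R)$-ideals $R$ and $\MMM$. Citing \cite[Proposition 15.7]{LR} does not prove it, and idempotent factors of $\AAA$ are not handled. Case (b) is also unnecessary: comparing $I$ and $J$ with $I\cap J$ involves only nested ideals.

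\emph{How to complete your approach.} Refining to minimal inclusions and using Lemma \ref{min-inc} does work, provided you skip the overring reduction and keep the $\operatorname{Tor}$ term. For a minimal inclusion $L'\subset L$ of fractional $(R,S)$-ideals, Lemma \ref{min-inc} gives $(L/L')_S\cong (U')^{(n)}$. Put $N=\bigoplus_{U\in C}U$. Flatness of ${}_RL$ gives an exact sequence of right $S$-modules
\[
0\to\operatorname{Tor}_1^R(N,L/L')\to N\otimes L'\to N\otimes L\to N\otimes L/L'\to 0 .
\]
Its outer terms are annihilated by $\ann U'$, hence are $U'$-isotypic. Therefore $\JH{N\otimes L}-\JH{N\otimes L'}\in\Z\cdot U'$. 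By your first step, both divisors are nonzero and effective, supported in the single cycles $C_L$ and $C_{L'}$ respectively. If these cycles differed, the difference would have a positive coefficient in $C_L$ and a negative one in $C_{L'}$, which is impossible. Chaining from $I\cap J$ up to $I$ and up to $J$ then gives $C_I=C_J$. This route is more elementary than the paper's, which relies on the computation $V\otimes K=\phi(\p{K})(V)$ from Theorem \ref{star-mult}, and hence on the multichain-idealizer analysis of Section \ref{sec-overring}.
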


\begin{proof}
	First, we will show that the cycle $C_{U, I}$ does not depend on the module $U_R$.
	 Let $W_R$ be another simple $R$-module with $W \otimes I \neq 0$, that belongs to the cycle of $U$. We claim that the simple modules from the Jordan-Hölder decomposition of $W \otimes I$ belong to the same cycle as the simple modules from the Jordan-Hölder decomposition $U \otimes I$.
	Since $I$ is an invertible $(O_l(I),O_r(I))$-ideal, by Lemma \ref{inv-ideal}, the simple $O_r(I)$-modules $U \otimes I$ and $W \otimes I$ belong to the same cycle of simple $O_r(I)$-modules. Applying \cite[Theorem 20.2]{LR} proves the claim. We will therefore write $C_I$ for $C_{U, I}$.

	Second, we will show that $C_I$ does not depend on $I$.
	Let $\fracideal{R}{J}{S}$ be another fractional ideal, and denote by $C_J$ the cycle of $S$-modules associated to $C$ by the ideal $J$. Since \cite[Theorem 20.2]{LR} shows that the Jordan-Hölder decomposition of
	$$ (U \otimes J \otimes J^{-1})_R = (U \otimes O_l(J))_R$$
	consists only of simple modules in the cycle $C$, the ideal $J^{-1}$ associates to $C_J$ the cycle $C$. Therefore, the ideal $K = J^{-1} I $ associates the cycle $C_J$ to the cycle $C_I$. Let $V$ be a simple $S$-module in the cycle $C_J$ with $V \otimes O_l(K) \neq 0$. Then, by Theorem \ref{star-mult}, we have
	$$ V \otimes K = \phi(\p{K})(V) ,$$
	and by Definition \ref{def} it is clear that $V \otimes K$ only consists of the simple modules in the cycle $C_J$. We conclude that $C_I = C_J$.
\end{proof}

\begin{lemma} \label{R0-exp}
	Let $R$ and $T$ be bounded HNP rings, and assume that there is a fractional ideal $\fracideal{R}{K}{T}$ with $T = O_r(K)$, and that every simple module appearing in $\p{K}_T$ has a trivial cycle. Let $U_1, \ldots, U_t$ be simple $T$-modules with trivial cycles. Then, there is a finite HNP subring $R_0 \subseteq R$ such that for any divisor
	$$D \in \bigoplus_{i=1}^t \Z \cdot U_i \leq \Div(T)$$
	there is a unique fractional ideal $\fracideal{R_0}{I}{T}$ with $ D = \p{I}$. 
\end{lemma}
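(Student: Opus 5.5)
The plan is to transport the problem to the situation of Lemma \ref{ideal-lemma} via the ideal $K$, using the ring $R' = O_l(K)$, which is a finite overring of $R$. Since $K$ is an invertible $(R',T)$-ideal, Lemma \ref{inv-ideal} tells us that tensoring with $K^{-1}$ carries simple $T$-modules bijectively to simple $R'$-modules and preserves cycles. In particular, each $U_i \otimes K^{-1}$ is a simple $R'$-module with a trivial cycle, and so is every module appearing in $\p{K}$. Fractional $(R_0,T)$-ideals $I$ then correspond bijectively to fractional $(R_0,R')$-ideals $J = IK^{-1}$, with $I = JK$. By Proposition \ref{mult-tens},
$$\p{I} = \p{JK} = \p{J}\otimes K + \p{K}.$$
Moreover, $\otimes K$ is a group isomorphism $\Div(R') \to \Div(T)$, since it is inverse to $\otimes K^{-1}$ by Lemma \ref{inv-ideal}. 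Hence $D = \p{I}$ if and only if $\p{J} = (D - \p{K})\otimes K^{-1}$.

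Next, let $U'_1,\dots,U'_s$ be the simple $R'$-modules $U_i\otimes K^{-1}$ together with $W\otimes K^{-1}$ for each $W$ in the support of $\p{K}$. All of these have trivial cycles over $R'$. For every $D \in \bigoplus \Z U_i$, the target divisor $(D-\p{K})\otimes K^{-1}$ lies in $\bigoplus_{j=1}^s \Z\, U'_j$. It therefore suffices to find a finite subring $R_0 \subseteq R \subseteq R'$ such that every divisor in $\bigoplus \Z\, U'_j$ equals $\p{J}$ for a unique fractional $(R_0,R')$-ideal $J$. Uniqueness is automatic from Theorem \ref{injectivity-mainthm}, together with the bijection $J \leftrightarrow JK$. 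So the real content is existence.

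For existence I would apply Lemma \ref{ideal-lemma} with $S := R'$. Each $U'_j$ has a trivial cycle, so $\MMM_j = \ann U'_j$ is invertible: a maximal ideal whose simple module is its own successor is invertible by the cycle theory in \cite{LR}. Write $R = \I_{R'}(\Acal)$ as a multichain idealizer (Remark \ref{remark}). Refine the chains of types $U'_j$ to finest chains from a maximal right ideal down to a right ideal minimal over $\MMM_j$, adding new chains of type $U'_j$ where none occur, and leave the other chains untouched. Let $R_0$ be the resulting multichain idealizer. Then $R_0 \subseteq R$ is finite, and Lemma \ref{ideal-lemma} produces, for every $D' \in \bigoplus \Z\, U'_j$, an ideal $J$ with $\p{J}=D'$: with the chain of length $n_j$, we need $k_j+(n_j+1)l_j$ to range over all integers, which it does since $0\le k_j\le n_j$ and $l_j \in \Z$.

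The main obstacle is this refinement step: checking that the refined family is still a legitimate multichain, with the chains of distinct types and compatible with the existing chains, so that $R_0$ is an HNP subring of $R$. I would handle this as in the proof of Theorem \ref{star-mult}, where exactly this refinement $\Acal'_i$ was used. Lemma \ref{ideal-lemma} needs each added chain to have minimal inclusions all the way down to $\MMM_j$, and such a finest chain exists because $R'/\MMM_j$ is simple artinian of finite length. Finally, $I := JK$ is the required fractional $(R_0,T)$-ideal.
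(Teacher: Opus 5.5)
Your proposal is correct and follows the paper's proof essentially step for step. You pass to $O_l(K)$ through the invertible ideal $K$, realize $(D-\p{K})\otimes K^{-1} = D\otimes K^{-1}+\p{K^{-1}}$ as $\p{J}$ over the refined multichain idealizer $R_0$ from the proof of Theorem \ref{star-mult} via Lemma \ref{ideal-lemma}, and set $I = JK$. The only differences are cosmetic: you take uniqueness directly from Theorem \ref{injectivity-mainthm} rather than from the uniqueness clause of Lemma \ref{ideal-lemma} (which itself rests on injectivity), and you make explicit why the annihilators of the trivial-cycle modules are invertible, which the paper leaves implicit.
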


\begin{proof}
	Denote $S= O_l(K)$. Let $V_1, \ldots, V_k$ be all simple $T$-modules appearing in $\p{K}$.
	By Lemma \ref{inv-ideal}, the $S$-modules $U_i \otimes K^{-1}$ and $V_j \otimes K^{-1}$ have trivial cycles. %for $i=1, \ldots, t$ and $j =1,\ldots, k$.
	%
%	Let $V_S$ be a simple $S$-module with a trivial cycle. We claim that $V \otimes K^{-1}$ is a simple $\overline{R}$-module with a trivial cycle. To observe simplicity, take $N_S \leq S$ to be a maximal one-sided ideal with
%	$$ V_S \cong S/N \text{.}$$
%	Then, $V \otimes K^{-1} \cong K^{-1} / NK^{-1}$ is also a simple $\overline{R}$-module since $K$ is invertible as an $(\overline{R},S)$-ideal (multiplying by $K$ preserves strict inclusions). Since $M = \ann V \lhd S$ is maximal, the ideal $K M K^{-1} \lhd T$ is also maximal and it annihilates $V \otimes K^{-1}$ (since $K^{-1} K M K^{-1} \leq N K^{-1}$). Therefore, $V \otimes K^{-1}$ is a simple module with a trivial cycle, which proves the claim.
	%
%	In particular, $$U_i \otimes \overline{R} = (U_i \otimes K) \otimes K^{-1}$$
%	are simple modules with a trivial cycle.
%	
%	Let $V_1, \ldots, V_k$ be all simple $S$-modules appearing in $\p{K}$. Since $V_i$ have trivial cycles by assumption, by the previous claim, the simple $\overline{R}$-modules $V_i \otimes K^{-1}$ also have trivial cycles.
%	Let $U'_i$ (resp.\ $V'_j$) be any simple $R$-modules with $U'_i \otimes \overline{R} = U_i \otimes K^{-1}$ (resp.\ $V'_i \otimes \overline{R} = V_i \otimes K^{-1}$ ).
	%
%	The ring $\overline{R}$ is admissible w.r.t. $U_1, \ldots, U_n$ and $U'_1, \ldots, U'_k$.
	%
	Let
	$$E = D \otimes K^{-1} + \p{K^{-1}} \in \Div(S) \text{.}$$
	By Proposition \ref{mult-tens}, we have $\p{K^{-1}} = - \p{K} \otimes K^{-1}$, and thus, only $U_i \otimes K^{-1}$ and $V_j \otimes K^{-1}$ appear in the above expression. Let $R_0 \subseteq R$ be the subring defined just as in the proof of Theorem \ref{star-mult}. By Lemma \ref{ideal-lemma}, there is an ideal $\fracideal{R_0}{J}{S}$ such that
	$$ E = \p{J} \text{.}$$
	Then, for $I = J K$, by Proposition \ref{mult-tens}, we have
	$$
		\p{I} = \p{J} \otimes K + \p{K} = D + \p{K^{-1}} \otimes K + \p{K}  = D \text{.}
	$$
	
	Now, assume that there is another ideal $\fracideal{R_0}{I'}{T}$ with
	$$ D = \p{I'} \text{.}$$
	Then, by Proposition \ref{mult-tens}, it follows that
	$$ E = \p{I' K^{-1}} \text{,}$$
	and by uniqueness in Lemma \ref{ideal-lemma}, we have
	$$ J = I' K ^{-1} \text{.}$$
	Multiplying the above equality by $K$, we obtain $I = I'$.
	\end{proof}
	
	The following lemma shows that a ring $T$ from the previous proposition always exists.

\begin{lemma} \label{trans-ideal-exists} 
	Let $R$ be a bounded HNP ring, $S$ an HNP ring connected to $R$, and let $U_1, \ldots, U_n$ be simple $S$-modules contained in different cycles. Then, there is a finite overring $T \supseteq S$ and a fractional ideal $\fracideal{R}{K}{T}$ with $O_r(K) = T$ such that the simple modules appearing in $\p{K}_T$ have trivial cycles, and $U_i \otimes T$ are simple $T$-modules with trivial cycles.
\end{lemma}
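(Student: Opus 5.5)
The plan is to build $T$ in three stages: first trivialize the cycles of the $U_i$, then pass to the right order of a connecting ideal, and finally trivialize the finitely many cycles appearing in the divisor of that ideal. Throughout I will use two facts about a finite overring $T' \supseteq T''$ of bounded HNP rings. First, by \cite[Corollary 27.10]{LR}, $T'$ cannot kill every module of a cycle of $T''$. Second, by \cite[Theorem 20.2]{LR} together with Remark \ref{remark}, the cycles of $T'$ are exactly the cycles of $T''$ with the killed modules removed, via $W \mapsto W \otimes T'$. It follows that a trivial cycle of $T''$ is never killed by $T'$, and its image is again a trivial cycle.

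\emph{Step 1.} Since the $U_i$ lie in different cycles, I choose a finite overring $T_1 \supseteq S$ that, in the cycle of each $U_i$, kills every module except $U_i$ (using \cite[Theorem 13.8]{LR}). By \cite[Corollary 27.10]{LR} this overring is finite, since it kills no whole cycle. Then each $U_i \otimes T_1$ is a simple $T_1$-module with a trivial cycle.

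\emph{Step 2.} Since $R$ and $S$ are connected, they are connected to $T_1$, so there is a fractional $(R,T_1)$-ideal $I$. Put $T_2 = O_r(I)$, which is a finite overring of $T_1$, and regard $I$ as an $(R,T_2)$-ideal. By the two facts above, each $U_i \otimes T_2$ is still simple and still has a trivial cycle.

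\emph{Step 3.} The divisor $\p{I_{T_2}}$ involves only finitely many simple $T_2$-modules $W_1, \ldots, W_m$. I take a finite overring $T \supseteq T_2$ that trivializes the cycles of all the $W_j$ while killing no whole cycle. The cycles of the $U_i \otimes T_2$ are already trivial, so they are unaffected. Finally I set $K = IT$.

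\emph{Verification.} First I check $O_r(K) = T$. Clearly $T \subseteq O_r(K)$. Conversely, if $ITx \subseteq IT$, multiplying on the left by $I^{-1}$ gives
\[ T_2 T x \subseteq T_2 T, \]
that is, $Tx \subseteq T$, so $x \in T$. Next, Proposition \ref{mult-tens} applied to $I$ and the $(T_2,T)$-ideal $T$ gives
\[ \p{K_T} = \p{I_{T_2}} \otimes T + \p{T_T} = \p{I_{T_2}} \otimes T, \]
because $\p{T_T} = 0$. Hence the only simple modules appearing in $\p{K_T}$ are among the $W_j \otimes T$, and these have trivial cycles by the choice of $T$. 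Lastly, $U_i \otimes_S T = (U_i \otimes T_2) \otimes T$ is simple with a trivial cycle, by the facts above.

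I expect the main obstacle to be the ordering of the construction. Passing to $O_r(I)$ could a priori kill some $U_i$, and it is precisely the preliminary trivialization in Step 1, combined with \cite[Corollary 27.10]{LR}, that protects the $U_i$. A second point to verify carefully is that a trivial cycle stays trivial under further finite overrings; this relies on the description of cycles of overrings in \cite[Theorem 20.2]{LR}.
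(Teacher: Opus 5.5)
Your proof is correct and is essentially the paper's argument. Both extend a connecting ideal by a finite overring that trivializes the relevant cycles, use Proposition \ref{mult-tens} to get $\p{K} = \p{I}\otimes T$, and rely on the fact that trivial cycles persist under finite overrings. The only difference is the ordering. The paper trivializes the cycles of the $U_i$ and of $\p{I_S}$ together in a single $\overline{S}$ and then sets $T = O_r(I\overline{S})$, so $O_r(K)=T$ holds by definition. You take the right order before the last trivialization, so you need the check $O_r(IT)=T$ via $I^{-1}I = T_2$, which you supply correctly.
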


%\begin{lemma} %\label{trans-ideal-exists} 
%	Let $R$ be a bounded HNP ring, $S$ an HNP ring connected to $R$, and $U_1, \ldots, U_n$ simple $S$-modules contained in different cycles. Then there is a fractional ideal $\fracideal{R}{K}{S}$ such that for $T= O_r(K)$, the simple modules appearing in $\p{K}_T$ have trivial cycles, and $U_i \otimes T$ are simple $T$-modules with trivial cycles.
%\end{lemma}

\begin{proof}
	Let $\fracideal{R}{I}{S}$ be any fractional ideal, and let $\overline{S}$ be a finite overring of $S$ that trivializes all the cycles of $U_i$, but doesn't kill $U_i$ for all $i$, and also trivializes all the cycles of simple modules appearing in $\p{I}$. Let $K = I \overline{S}$, and $T = O_r(K)$. Since $K = I T$, by Proposition \ref{mult-tens}, we have
	$$\p{K} = \p{I} \otimes T.$$
	Since $\overline{S}$ trivializes all the cycles of simple modules appearing in $\p{I}$, every cycle of simple $T$-modules appearing in $\p{K}$ is trivial. Similarly, modules $U_i \otimes T$ are simple with trivial cycles.
\end{proof}

The following lemma shows that knowing some “local” information about ideal is enough to determine this “local” information about the tensor product of any divisor with that ideal.

\begin{lemma} \label{same-mult}
	Let $R$ and $S$ be bounded HNP rings, $V \in \Div(R)$ a divisor, and $\fracideal{R}{I}{S}$ and $\fracideal{R}{J}{S}$ fractional ideals. If the divisors $\p{I_S}$ and $\p{J_S}$ have the same multiplicity at a simple $S$-module $U$, then
	$$ V \otimes I \quad \text{and} \quad V \otimes J$$
	have the same multiplicity at $U$ as well.
\end{lemma}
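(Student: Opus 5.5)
The plan is to localize at $U$ by passing to a large overring of $S$ in which the cycle of $U$ becomes trivial, and then to compare $I$ and $J$ through an ideal from $R$ to a finite overring of $R$, where Theorem \ref{star-mult} applies.

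\textbf{Step 1: localization.} Let $T \supseteq S$ be a finite overring that trivializes the cycle of $U$ and does not kill $U$. Every other module in that cycle is killed by $T$, and $U \otimes T$ is a simple $T$-module. Hence, for any divisor $E \in \Div(S)$, the multiplicity of $E$ at $U$ equals the multiplicity of $E \otimes T$ at $U \otimes T$. By Proposition \ref{mult-tens}, $\p{IT} = \p{I} \otimes T$ and $\p{JT} = \p{J} \otimes T$. Also $(V \otimes I) \otimes T = V \otimes IT$, and likewise for $J$. So it suffices to prove the lemma for $IT$, $JT$ and $U \otimes T$. We may therefore assume that $U$ has a trivial cycle.

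Two consequences of this reduction:
\begin{itemize}
\item No finite overring of $S$ can kill $U$, since it would then kill a whole cycle, contradicting \cite[Corollary 27.10]{LR}.
\item Tensoring with $O_r(I)$, $O_r(J)$, or any common finite overring $\overline{S}$ of them preserves the multiplicity at $U$.
\end{itemize}
Replacing $I, J$ by $I\overline{S}, J\overline{S}$ in the same way, we may further assume that $I$ and $J$ have the same right order $S$.

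\textbf{Step 2: transfer to the overring case.} Consider $L = I J^{-1}$. It is a fractional $(R, O_l(J))$-ideal, and $O_l(J) \supseteq R$ is a finite overring. Since $J^{-1}J = O_r(J) = S = O_r(I)$, we have
$$ L J = I .$$
Flatness then gives
$$ V \otimes I = (V \otimes L) \otimes J .$$
By Theorem \ref{star-mult}, applied with $R_0 = R$ and the overring $O_l(J)$,
$$ V \otimes L = \phi(\p{L})(V) .$$
Proposition \ref{mult-tens} gives $\p{I} = \p{L} \otimes J + \p{J}$. Hence the hypothesis says exactly that $\p{L} \otimes J$ has multiplicity $0$ at $U$.

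\textbf{Step 3: a descent statement.} The lemma now reduces to the following claim. If $\p{L} \otimes J$ vanishes at $U$, then $\phi(\p{L})(V) \otimes J$ has the same multiplicity at $U$ as $V \otimes O_l(J) \otimes J$, that is, as $V \otimes J$.

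Since $J$ is invertible as an $(O_l(J), S)$-ideal, Lemma \ref{inv-ideal} identifies simple $O_l(J)$-modules with simple $S$-modules, preserving cycles. So $U' = U \otimes J^{-1}$ is a simple $O_l(J)$-module with a trivial cycle, and the multiplicity at $U$ after tensoring with $J$ equals the multiplicity at $U'$ before tensoring. The claim therefore becomes: if $\p{L}(U') = 0$, then $\phi(\p{L})(V)$ and $V \otimes_R O_l(J)$ agree at $U'$.

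\textbf{Step 4: reading off Definition \ref{def}.} If $\p{L}(U') = 0$, then $\overline{k} = 0$, so $\phi(\p{L})$ sends to $U'$ exactly the module $V_0$ with $V_0 \otimes O_l(J) = U'$. That is precisely the multiplicity of $V \otimes O_l(J)$ at $U'$.

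\textbf{Expected obstacle.} The step I expect to be delicate is Step 1's arrangement that $I$ and $J$ have equal right orders without losing the multiplicity at $U$. This relies on the trivial-cycle property guaranteeing that the relevant overrings never kill $U$, which I would check directly using \cite[Corollary 27.10]{LR}. If equal right orders cannot be arranged, the fallback is to work with $I O_r(J)$ and $J O_r(I)$ in place of $I$ and $J$, which by the same argument have unchanged multiplicities at $U$.
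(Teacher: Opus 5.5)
The gap is the second reduction in Step 1, where you assume $O_r(I)=O_r(J)$. This relies on a common finite overring $\overline{S}$ of $O_r(I)$ and $O_r(J)$, and such a ring need not exist. Two finite overrings of $S$ can kill complementary parts of a nontrivial cycle. Any ring containing both then kills the whole cycle, so it is not finite by \cite[Corollary 27.10]{LR}. Trivializing the cycle of $U$ does not prevent this, because the conflict can occur at a different cycle.

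Concretely, take $R=S$ as in Example \ref{example-2}, but with $\loc$ a semilocal PID having two maximal ideals $\pp=\pi\loc$ and $\mathfrak q$. Let $I=M_2(\loc)$ and $J=tM_2(\loc)t^{-1}$, which are fractional $(R,R)$-ideals, and let $U$ be the simple module at $\mathfrak q$. Then:
\begin{itemize}
\item $U$ has a trivial cycle.
\item $\p{I}=-\JH{M_2(\loc)/R}$ and $\p{J}$ are supported at $\pp$, so both have multiplicity $0$ at $U$.
\item The maximal orders $O_r(I)=M_2(\loc)$ and $O_r(J)=tM_2(\loc)t^{-1}$ kill the two different modules of the cycle at $\pp$, so they have no common finite overring.
\end{itemize}
Your fallback does not repair this either. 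In this example $O_r(IO_r(J))=O_r(J)$ and $O_r(JO_r(I))=O_r(I)$, so replacing both ideals merely swaps the two right orders.

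The fix is easy. Step 2 never needs equal right orders, only $LJ=I$, and in general $IJ^{-1}J=IO_r(J)$. So replace only $I$ by $I'=IO_r(J)$, keep $J$, and regard both as fractional $(R,O_r(J))$-ideals. After your first reduction $U$ has a trivial cycle, so the finite overring $O_r(J)$ cannot kill $U$. Your Step 1 argument then shows that the lemma for $I',J$ at $U\otimes O_r(J)$ implies it for $I,J$ at $U$, using
\begin{itemize}
\item $\p{I'}=\p{I}\otimes O_r(J)$ and $\p{J_{O_r(J)}}=\p{J_S}\otimes O_r(J)$;
\item $V\otimes I'=(V\otimes I)\otimes O_r(J)$ and $V\otimes J=(V\otimes J)\otimes O_r(J)$.
\end{itemize}
Since $O_r(J)\subseteq O_r(I')$, the ideal $L=I'J^{-1}$ satisfies $LJ=I'$. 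Steps 2--4 then go through as written; Step 4 is correct, since $\overline{k}=0$ picks out exactly the module $V_0$ with $V_0\otimes O_l(J)=U'$.

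With this repair your route differs from the paper's. The paper chooses an auxiliary ideal $K$ with $U\otimes O_r(K)$ simple and applies Theorem \ref{star-mult} separately to $IK^{-1}$ and $JK^{-1}$. It then uses that the coefficient of $\phi(D)(V)$ at a simple module depends only on the coefficient of $D$ there, so no compatibility of right orders is ever needed. You instead use $J$ itself as the auxiliary ideal and need only the case $\overline{k}=0$ of Definition \ref{def}.
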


\begin{proof}
	Let $\fracideal{R}{K}{S}$ be any fractional ideal such that $U \otimes O_r(K)$ is a simple module (which exists by Lemma \ref{trans-ideal-exists}). By assumption, the divisors $\p{I} \otimes O_r(K)$ and $\p{J} \otimes O_r(K)$ have the same multiplicity at $U \otimes O_r(K)$. Since
	$$ \p{I K^{-1}} = \p{I} \otimes K^{-1} + \p{K^{-1}} ,$$
	and similar for $J$, and tensoring by $K^{-1}$ is a bijection between simple $O_r(K)$-modules and simple $O_l(K)$-modules (by Lemma \ref{inv-ideal}), the divisors $\p{IK^{-1}}$ and $\p{JK^{-1}}$ have the same multiplicity at the simple $O_l(K)$-module $U \otimes K^{-1}$. By Theorem \ref{star-mult} and Definition \ref{def},
	$$ V \otimes IK^{-1} \quad \text{and} \quad V \otimes JK^{-1}$$
	have the same multiplicity at $U \otimes K^{-1}$. Tensoring by $K$, shows that
	$$ (V \otimes I ) \otimes O_r(K) \quad \text{and} \quad (V \otimes J ) \otimes O_r(K)$$
	have the same multiplicity at $U \otimes O_r(K)$, which implies the desired conclusion.
\end{proof}

By inspecting associativity of $\circ$ (a property that we want to hold, but did not prove yet), it is clear how $\circ$ should be defined using $\star$ (just as in Theorem \ref{general-mainthm}). However, it is less obvious if $\circ$ can be defined in a consistent fashion. The following theorem shows that this can indeed be done.

\begin{theorem} \label{circ-def-thm}
	For bounded connected HNP rings $R$ and $S$, and divisors $V \in \Div(R)$ and $D \in \Div(S)$, there is precisely one divisor
	$$ E \in \Div(S)$$
	such that for any ideal $\fracideal{R}{K}{S}$, we have
	$$ E \otimes {O_r(K)} = \big( V \star (D \otimes K^{-1} + \p{K^{-1}}) \big) \otimes K + \p{K} \in \Div(O_r(K)).$$
\end{theorem}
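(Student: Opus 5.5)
Write $E_K := \big(V \star (D\otimes K^{-1} + \p{K^{-1}})\big)\otimes K + \p{K} \in \Div(O_r(K))$ for each fractional ideal $\fracideal{R}{K}{S}$. The plan has two parts. Uniqueness comes from Lemma \ref{extensions}: two candidates agree after tensoring with $O_r(K)$ for every $K$. Existence needs a compatibility statement: for any two fractional ideals $K, L$, the divisors $E_K$ and $E_L$ agree on every simple $S$-module that survives in both $O_r(K)$ and $O_r(L)$.

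\textbf{Uniqueness.} Let $E, E'$ both satisfy the condition and fix a finite overring $\overline{S}\supseteq S$ that trivializes the relevant cycles. Put $K = I\overline{S}$ for any fractional $(R,S)$-ideal $I$, so $O_r(K)\supseteq \overline{S}$ is a finite overring. Then $E\otimes O_r(K) = E'\otimes O_r(K)$, and Lemma \ref{extensions} gives $E=E'$.

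\textbf{Reformulating $E_K$ in terms of ideals.} By Lemma \ref{trans-ideal-exists}, after enlarging $O_r(K)$ via $K \mapsto K\overline{S}$ we may assume that $T=O_r(K)$ has trivial cycles at all relevant modules. Then Lemma \ref{R0-exp} gives a finite subring $R_0\subseteq R$ and a fractional ideal $\fracideal{R_0}{I}{T}$ with $\p{I} = D\otimes T$. From its proof, $I = JK$ with $\p{J} = D\otimes K^{-1}+\p{K^{-1}}$ and $J$ an $(R_0, O_l(K))$-ideal. Theorem \ref{star-mult}, applied to $O_l(K)\supseteq R\supseteq R_0$, gives $V\star \p{J} = V\otimes J + \p{J}$. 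Tensoring with $K$ and using Proposition \ref{mult-tens}:
$$E_K = V\otimes JK + \p{J}\otimes K + \p{K} = V\otimes_{R_0} I + \p{I} = V\otimes_{R_0} I + D\otimes T.$$
So, locally, $E_K$ is "$V\otimes I + D$" for any $(R_0,T)$-ideal $I$ whose divisor is $D\otimes T$. This expression no longer refers to $K$.

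\textbf{Compatibility (main obstacle).} Let $U$ be a simple $S$-module surviving in $O_r(K)$ and in $O_r(L)$. Choose a finite overring $T\supseteq O_r(K), O_r(L)$ (e.g.\ enlarge via $K\overline{S}$, $L\overline{S}$ with a common $\overline{S}$) in which $U$ survives with trivial cycle. The previous step applies to $K T$ and $LT$, and $E_K\otimes T = E_{KT}$ by Lemma \ref{tensor-lemma} together with Proposition \ref{mult-tens}. This yields ideals $I_K, I_L$ (over possibly different subrings $R_0$; shrink to a common one, which is harmless by Lemma \ref{down-lemma}) with $\p{I_K}=\p{I_L}=D\otimes T$. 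Lemma \ref{same-mult} then shows that $V\otimes I_K$ and $V\otimes I_L$ have the same multiplicity at $U\otimes T$, so $E_K$ and $E_L$ agree at $U$. The delicate point is making the enlargement $K\mapsto KT$ legitimate, i.e.\ checking that $E_K\otimes T = E_{KT}$ (which needs $(KT)^{-1} = T K^{-1}$ and tracking $\p{}$), and I expect this bookkeeping to be the hardest part.

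\textbf{Existence.} Define $E$ coefficientwise: for a simple $S$-module $U$, pick any $K$ with $U$ surviving in $O_r(K)$ (such $K$ exists by Lemma \ref{trans-ideal-exists}) and read off the coefficient of $E_K$ at $U\otimes O_r(K)$. By compatibility this does not depend on $K$. $E$ has finite support because only finitely many cycles are involved. Finally, every simple $O_r(K)$-module is of the form $U\otimes O_r(K)$, so $E\otimes O_r(K)=E_K$.
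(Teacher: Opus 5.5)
There is a genuine gap in your compatibility step.

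\textbf{What is fine.} Your uniqueness argument and your local reformulation $E_K=V\otimes_{R_0}I+D\otimes T$ are correct. The latter is exactly \eqref{K-bar}--\eqref{K-bar2} in the paper. The bookkeeping $E_K\otimes T=E_{KT}$, which you single out as the main obstacle, is routine once $T\supseteq O_r(K)$. Then $O_r(KT)=T$, and Proposition \ref{mult-tens}, Lemma \ref{tensor-lemma} and $K^{-1}O_l(KT)=(KT)^{-1}$ give it.

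\textbf{The gap.} A finite overring $T\supseteq O_r(K),O_r(L)$ need not exist. If $T\supseteq O_r(K)$ and $O_r(K)$ kills a simple $S$-module $A$, then $A\otimes_S T=(A\otimes_S O_r(K))\otimes T=0$, so $T$ kills $A$ as well. Now take the following data:
\begin{itemize}
\item $R=S$, with a cycle $\{A,B\}$ of length two;
\item a simple module $U$ in a different cycle;
\item $K=S(\{A\})$ and $L=S(\{B\})$.
\end{itemize}
These are finite overrings, hence fractional $(S,S)$-ideals with $O_r(K)=K$ and $O_r(L)=L$, and $U$ survives in both. But every ring containing both kills the whole cycle $\{A,B\}$, so it is not finite by \cite[Corollary 27.10]{LR}.

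Enlarging first (your $K\overline{S}$, $L\overline{S}$) does not help. To transfer information out of $E_K$ by tensoring with an overring, you need a ring containing $O_r(K)$, and any such ring still kills $A$; likewise for $L$ and $B$. So the agreement of $E_K$ and $E_L$ at $U$, which is the whole content of existence, is not established.

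\textbf{The missing idea and the repair.} You need to compare across two different overrings without a common one, by restricting scalars to $S$. This is what the paper does.
\begin{itemize}
\item It defines $k_U$ as the coefficient of $V\otimes I+D\otimes T$ at $U\otimes T$, with $T$ from Lemma \ref{trans-ideal-exists} and $\fracideal{R_0}{I}{T}$ from Lemma \ref{R0-exp}.
\item To compare with another such pair $(T',J)$, it views $I$ as a fractional right $S$-ideal and passes to the $(R_0,T')$-ideal $IT'$. For this ideal, $\p{IT'}=\p{I_S}\otimes T'$ and $V\otimes IT'=(V\otimes I)\otimes_S T'$.
\item Then $\p{IT'}$ and $\p{J}=D\otimes T'$ have the same coefficient at $U\otimes T'$, and Lemma \ref{same-mult} applies inside $\Div(T')$.
\item After that, an arbitrary $K$ needs only a single overring $T\supseteq O_r(K)$, and Lemma \ref{extensions} finishes.
\end{itemize}
You can repair your proof the same way. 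Use separate overrings $T_K\supseteq O_r(K)$ and $T_L\supseteq O_r(L)$, and compare $I_KT_L$ with $I_L$ in $\Div(T_L)$.

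\textbf{A secondary point.} Your ``shrink to a common $R_0$'' is not justified: two finite subrings need not have a common finite subring (cf.\ the use of \cite[Corollary 8.18]{LR} in Section \ref{sec-inj}). It is also unnecessary. Theorem \ref{star-mult} allows $R_0\subseteq R$, so the proof of Lemma \ref{same-mult} goes through verbatim for an $(R_0,T)$-ideal and an $(R_0',T)$-ideal with $R_0,R_0'\subseteq R$.
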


\begin{proof}%[Proof of Thorem \ref{circ-def-thm}.]
	For a simple $S$-module $U$, let $T \supseteq S$ be an overring described in Lemma \ref{trans-ideal-exists} (for $U_1 = U$), let $\fracideal{R_0}{I}{T}$ be an ideal with $D \otimes T = \p{I}_T$ (which exists by Lemma \ref{R0-exp}), and define $k_{U,T}$ to be the coefficient of the simple module $U \otimes T$ in the divisor
	\begin{align} \label{E-def} V \otimes I + D \otimes T \in \Div(T) .\end{align}
	We claim that $k_{U,T}$ depends only on $U$ and not on a choice of $T$; thus, we will denote it by $k_U$. Indeed, let $T'$ be another appropriate overring of $S$ and $\fracideal{R'_0}{J}{T'}$ a fractional ideal with $D \otimes T' = \p{J_{T'}}$. By tensoring \eqref{E-def} with $T'$, we see that $k_{U, T}$ is also the coefficient of
	$$ V \otimes IT' + (D \otimes T) \otimes T' \in \Div(T') $$
	at $U \otimes T' = U \otimes T \otimes T'$ (considering the discussion next to \eqref{V-arrows}). Since the divisors $D \otimes T'$ and $D \otimes T \otimes T'$ have the same coefficient at $U \otimes T'$, in light of Lemma \ref{same-mult} it is enough to show that
	$$ \p{IT'} = \p{I_S} \otimes T' $$
	and $\p{J_{T'}} = D \otimes T'$ have the same coefficient at $U \otimes T'$. By the discussion next to \eqref{V-arrows}, the divisors $\p{I_S}$ and $D$ have the same coefficient at $U$, and therefore, so do the divisors $\p{I_S} \otimes T'$ and $D \otimes T'$ at $U \otimes T'$, which proves the claim.

	Note that only finitely many of $k_U$ are nonzero. Indeed, if $U_S$ is a simple module that does not share a cycle with some simple module from $D$ or is not in a cycle that is associated to some cycle that involves a module from $V$ via Lemma \ref{ideal-asoc}, then $k_U = 0$. We can see this from the equality \eqref{E-def}.
	
	We define the divisor $E \in \Div(S)$ by setting
	$$ E = \sum_{U \in \M_S} k_U \cdot U.$$
	It remains to show that for an arbitrary $\fracideal{R}{K}{S}$, we have
	$$  E \otimes O_r(K) = \big( V \star (D \otimes K^{-1} + \p{K^{-1}}) \big) \otimes K + \p{K},$$
	from where uniqueness of $E$ is also apparent.
	To see this,  let $S' \supseteq O_r(K)$ be any right finite overring trivializing the cycles of all the simple modules appearing in $E \otimes O_r(K)$ and
	$$\big( V \star (D \otimes K^{-1} + \p{K^{-1}}) \big) \otimes K + \p{K} .$$
	Let $T \supseteq S'$ be an overring that trivializes the cycles in all the simple modules appearing in $D$ and $\p{K}$. Note that $T$ satisfies the assumptions of Lemma \ref{R0-exp} (w.r.t.\ ideal $L = KT$), and therefore, there is a fractional ideal $\fracideal{R_0}{I}{T}$ such that $D \otimes T = \p{I_T}$.
	We have
	\begin{align} \label{K-bar}
		\begin{split}
			\Big( \big( V \star (D \otimes K^{-1} &+ \p{K^{-1}}) \big) \otimes K +  \p{K} \Big) \otimes T \\ &=  \big( V \star (D \otimes L^{-1} + \p{L^{-1}}) \big) \otimes L +  \p{L} .
		\end{split}
	\end{align}
	Indeed, to see this use Proposition \ref{mult-tens}, the fact that $L = O_l(L) \otimes L$, Lemma \ref{tensor-lemma}, and finally, the equality $K^{-1} O_l(L) =  L^{-1}$. Now, since
	$$D \otimes L^{-1} = D \otimes T \otimes L^{-1} = \p{I} \otimes L^{-1},$$
	using Proposition \ref{mult-tens} and Theorem \ref{star-mult}, we obtain
	\begin{align} \label{K-bar2}
		\begin{split}
			\big( V \star (D \otimes L^{-1} &+ \p{L^{-1}}) \big) \otimes L +  \p{L} = V \otimes I + D \otimes T .
		\end{split}
	\end{align}
	Let $W$ be a simple $T$-module that appears in either $E \otimes T$ or \eqref{K-bar}, and let $U$ be a simple $S$-module with $U \otimes T = W$. The cycle of $W$ is then trivial, and by definition of $E$, the integer $k_U$ is the coefficient of the divisor
	$$V \otimes I + D \otimes T$$
	at module $W$. By considering \eqref{K-bar2}, it is clear that the divisors $E \otimes T$ and \eqref{K-bar} have the same coefficient at $W$. Since $W$ was arbitrary, we conclude that
	$$ E \otimes T = \Big( \big( V \star (D \otimes K^{-1} + \p{K^{-1}}) \big) \otimes K +  \p{K} \Big) \otimes T .$$
	Applying Lemma \ref{extensions} concludes the proof.
\end{proof}

\begin{definition} \label{circ-def}
	Let $R$ and $S$ be bounded HNP rings, let $V \in \Div(R)$ and $D \in \Div(S)$. We define
	$ V \circ D$
	to be the divisor $E \in \Div(S)$ described in Theorem \ref{circ-def-thm}.
\end{definition}

It is clear that Theorem \ref{general-mainthm} holds (for the operation $\circ$ given in Definition \ref{circ-def}).
In order to prove Theorem \ref{mult-mainthm}, we will first show that $\circ$ behaves well when tensoring with overrings (similar to Lemma \ref{tensor-lemma}).

\begin{lemma} \label{overring-lemma}
	Let $R$ and $S$ be bounded HNP rings, let $V \in \Div(R)$ and $D \in \Div(S)$. For any finite overring $T \supseteq S$, we have
	$$ (V \circ D) \otimes T = V \circ (D \otimes T) \text{.}$$
\end{lemma}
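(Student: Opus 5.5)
We reduce the claim to the characterization of $\circ$ in Theorem \ref{circ-def-thm}. Both sides lie in $\Div(T)$. By the uniqueness part of Theorem \ref{circ-def-thm}, applied to the pair $R$ and $T$, the divisor $V \circ (D\otimes T)$ is the unique $E' \in \Div(T)$ such that, for every fractional ideal $\fracideal{R}{L}{T}$,
$$ E' \otimes O_r(L) = \big( V \star (D\otimes T \otimes L^{-1} + \p{L^{-1}})\big)\otimes L + \p{L}. $$
It therefore suffices to verify that $E' = (V\circ D)\otimes T$ satisfies this identity for every such $L$.

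Fix $\fracideal{R}{L}{T}$. I take $K = L \cap S$, or more robustly any fractional $(R,S)$-ideal $K$ with $KT = L$; such a $K$ exists, for instance $K = L \cap S$. Its right order $O_r(K)$ is a finite overring of $S$, and $O_r(L) \supseteq O_r(K)$, with $O_r(L)$ a finite overring of $T$.

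Apply the defining identity of $V\circ D$ to $K$ and tensor both sides with $O_r(L)$. The left-hand side becomes
$$ (V\circ D)\otimes O_r(K)\otimes O_r(L) = \big((V\circ D)\otimes T\big)\otimes O_r(L). $$
On the right-hand side we must show
$$ \Big(\big( V \star (D \otimes K^{-1} + \p{K^{-1}}) \big) \otimes K + \p{K}\Big) \otimes O_r(L) = \big( V \star (D\otimes L^{-1} + \p{L^{-1}})\big)\otimes L + \p{L}. $$
This is the same manipulation as equality \eqref{K-bar} in the proof of Theorem \ref{circ-def-thm}. First, Proposition \ref{mult-tens} gives $\p{L} = \p{K}\otimes T$, hence $\p{K}\otimes O_r(L) = \p{L}\otimes O_r(L)$, where $\p{L}$ is viewed in $\Div(O_r(L))$ via $L = L\,O_r(L)$. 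Next, write $L = O_l(L)\otimes_{O_l(K)} K\, O_r(L)$. Then use Lemma \ref{tensor-lemma} to move the tensor with $O_l(L)$ inside the $\star$ (that is, inside $\phi$). Finally, use $K^{-1}O_l(L) = L^{-1}$ together with Proposition \ref{mult-tens} to rewrite $(D\otimes K^{-1} + \p{K^{-1}})\otimes O_l(L)$ as $D\otimes T\otimes L^{-1} + \p{L^{-1}}$.

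The main obstacle is this bookkeeping step: identifying $(D \otimes K^{-1} + \p{K^{-1}})\otimes O_l(L)$ with $D\otimes T\otimes L^{-1} + \p{L^{-1}}$. It requires knowing $O_l(L) \supseteq O_l(K)$ and $L^{-1} = K^{-1}O_l(L)$ as an $(O_r(L)\text{-side})$ ideal. I would verify these by the same argument the paper uses for \eqref{K-bar}. If the choice $K=L\cap S$ causes trouble, I would instead bypass it via Lemma \ref{extensions}: pass to a large overring trivializing all relevant cycles, and compare coefficients using Theorem \ref{mult-mainthm}-style representations $D\otimes T'=\p{I}$ from Lemma \ref{R0-exp}. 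In that setting both sides become $V\otimes I + D\otimes T'$ by Theorem \ref{star-mult}, exactly as in the proof of Theorem \ref{circ-def-thm}.
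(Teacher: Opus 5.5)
Your reduction to the characterization in Theorem \ref{circ-def-thm} is the right one, and it is essentially what the paper does. The paper passes to a larger overring $\overline{T}\supseteq T$ and takes an $(R,\overline{T})$-ideal, which is simultaneously an $(R,S)$- and an $(R,T)$-ideal. It then concludes with Lemma \ref{extensions}, which amounts to the uniqueness clause you invoke.

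There is, however, a gap in the step that produces $K$. The set $L\cap S$ is in general not a fractional $(R,S)$-ideal, because it is not a left $R$-module. $R$ and $S$ are only connected, not comparable, so for $r\in R$ and $x\in L\cap S$ you get $rx\in L$ but typically $rx\notin S$. Your existence claim for $K$ is therefore unsupported.

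Even granting some $(R,S)$-ideal $K$ with $KT=L$, the bookkeeping you defer to \eqref{K-bar} is not available. There the overring contains $O_r(K)$ by construction. Here you would need $O_r(K)\subseteq O_r(L)$, already to rewrite the left-hand side as $\big((V\circ D)\otimes T\big)\otimes O_r(L)$, and also $K^{-1}O_l(L)=L^{-1}$. Neither follows from $KT=L$ without a further argument about how the overrings $O_r(K)$ and $T$ interact.

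The missing observation makes all of this unnecessary: $L$ itself is a fractional $(R,S)$-ideal. It is an $(R,S)$-bimodule. Since $T$ is finitely generated as a right $S$-module, a common left denominator gives a regular $c$ with $cT\subseteq S$, so $aL\subseteq T$ implies $caL\subseteq S$.

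Take $K=L$. Then:
\begin{itemize}
\item $O_r(K)=O_r(L)$ and $K^{-1}=L^{-1}$;
\item the term $\p{K}$, read in $\Div(O_r(L))$, is $\p{L_{O_r(L)}}$ in both formulas, by Proposition \ref{mult-tens};
\item $D\otimes_S L^{-1}=(D\otimes T)\otimes_T L^{-1}$, because $TL^{-1}=L^{-1}$.
\end{itemize}
Hence the defining identity of $V\circ D$ at $K=L$ reads
\[
\big((V\circ D)\otimes T\big)\otimes O_r(L)=\big(V\star(D\otimes T\otimes L^{-1}+\p{L^{-1}})\big)\otimes L+\p{L}.
\]
This is exactly the required identity for $E'=(V\circ D)\otimes T$, and the uniqueness clause of Theorem \ref{circ-def-thm} finishes the proof. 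With this choice your \emph{main obstacle} disappears, and your argument becomes the paper's.

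Your fallback route is not needed. If you used it, you would have to rely only on Lemma \ref{R0-exp} and Theorem \ref{star-mult}, not on Theorem \ref{mult-mainthm}, whose proof uses this very lemma.
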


\begin{proof}
	Let $\overline{T} \supseteq T$ be a right finite overring that trivializes the cycles of all simple modules that appear in $(V \circ D) \otimes T$ and $V \circ (D \otimes T)$.
	Let $\fracideal{R}{K}{\overline{T}}$ be any fractional ideal.
	We clearly have
	\begin{align*}
		\big( (V \circ D) \otimes T \big)  \otimes O_r(K) &= (V \circ D) \otimes O_r(K) \\
		&= \big( V \star (D \otimes K^{-1} + \p{K^{-1}})  \big) \otimes K + \p{K} ,
	\end{align*}
	where in the second line we used Theorem \ref{circ-def-thm} for  $E = V \circ D$.
	On the other hand, by Theorem \ref{circ-def-thm} for $E = V \circ (D \otimes \overline{T})$, we also have
	\begin{align*}
		\big( V \circ (D \otimes T) \big) \otimes O_r(K) &= \big( V \star ((D \otimes T) \otimes K^{-1}+ \p{K^{-1}}) \big) \otimes K + \p{K} \\
		&= \big( V \star (D \otimes K^{-1} + \p{K^{-1}})  \big) \otimes K + \p{K} .
	\end{align*}
	Applying Lemma \ref{extensions} concludes the proof.
\end{proof}

%The following theorem extends Theorem \ref{star-mult}.

%\begin{theorem} \label{circ-ideal-mult-thm}
\begin{proof}[Proof of Theorem \ref{mult-mainthm}]
	Let $R$ and $S$ be bounded HNP rings. In light of Lemma \ref{R0-exp} and Lemma \ref{trans-ideal-exists}, it is enough to show that for a finite subring $R_0 \subseteq R$, a divisor $V \in \Div(R)$, and a fractional ideal $\fracideal{R_0}{I}{S}$, we have
	$$ V \circ \p{I_S} = V \otimes I +  \p{I} \text{.}$$
%\end{theorem}

%\begin{proof}
	Let $T \supseteq S$ be a right finite overring that trivializes the cycles of all simple modules that appear in $V \circ \p{I}$ and $\p{I} + V \otimes I$. Let $\fracideal{R}{K}{T}$ be any fractional ideal.
%	By Lemma \ref{overring-lemma} and \eqref{mult-tens}, we have
%	$$ (V \circ \p{I}) \otimes O_r(K) = V \circ \p{I O_r(K)} \text{,}$$
	By Theorem \ref{circ-def-thm} and Proposition \ref{mult-tens}, we have
	\begin{align*}
		(V \circ \p{I}) \otimes O_r(K)  &= \big( V \star (\p{I} \otimes K^{-1} + \p{K^{-1}}) \big) \otimes K + \p{K} \\
		&= (V \star \p{IK^{-1}}) \otimes K + \p{K} ,
	\end{align*}
	and hence, by Theorem \ref{star-mult} and Proposition \ref{mult-tens},
	\begin{align*}
		(V \star \p{IK^{-1}}) \otimes K + \p{K} &= (V \otimes IK^{-1} + \p{IK^{-1}} ) \otimes K + \p{K}\\
		&= V \otimes I O_r(K) + \p{I O_r(K)}  \\
		&=(V \otimes I + \p{I})\otimes O_r(K) .
	\end{align*}
	Applying Lemma \ref{extensions} concludes the proof.
\end{proof}

%%%%%%%%%%%%%%%%%%%%%%%%%%%%%%%%%%%%%%%%%%%%%%%
\section{Category, Functoriality, and Uniqueness} \label{sec-cat}

We will now define the category of divisors $\Div$. Set objects of $\Div$ to be connected HNP rings (in $Q$, a fixed ring of quotients) and the morphisms from $R$ to $S$ to be the divisors $\Div(R)$. For a morphism $D \in \Div(S)$ from $S$ to $R$, and a morphism $V \in \Div(R)$ from $R$ to $T$, we define their composition to be $V \circ D$ given in Definition \ref{circ-def}. We claim that this is indeed a category. By definition it is clear that
$$ 0_R \circ D = D \text{.}$$
Since $\p{S} = 0_S$, Theorem \ref{mult-mainthm} shows that
$$ V \circ 0_S = V \otimes S,$$
which is just $V$ if $S = R$. This shows that left and right unit laws hold. The following proposition shows that $\circ$ is in fact associative, and with this $\Div$ is a category.

\begin{proposition}
	Let $R$, $S$, and $T$ be bounded connected HNP rings, and let $V \in \Div(R)$, $D \in \Div(S)$, and $E \in \Div(T)$. Then,
	$$ (V \circ D) \circ E = V \circ (D \circ E ) \text{.}$$
	%i.e., the divisors form a category.
\end{proposition}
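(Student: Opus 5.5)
Our plan is to reduce associativity to associativity of ideal multiplication, using Theorem \ref{mult-mainthm}, Lemma \ref{overring-lemma}, and Lemma \ref{extensions}. Here $E \in \Div(T)$, $D \in \Div(S)$, $V \in \Div(R)$, and both sides lie in $\Div(T)$. Fix a finite overring $\overline{T} \supseteq T$ that trivializes the cycles of all simple modules appearing in $(V\circ D)\circ E$ and $V\circ(D\circ E)$. By Lemma \ref{extensions} it suffices to produce a finite overring $T' \supseteq \overline{T}$ with
$$ \big((V \circ D)\circ E\big)\otimes T' = \big(V\circ (D\circ E)\big)\otimes T' .$$
By Lemma \ref{overring-lemma}, tensoring with $T'$ can be pushed onto the right-most argument of each $\circ$. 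Hence it is enough to compare $(V\circ D)\circ (E\otimes T')$ and $V\circ\big(D\circ(E\otimes T')\big)$.

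First, we apply Theorem \ref{mult-mainthm} to the pair $(D,E)$, with the finite overring $\overline{T}$. This gives a finite subring $S_0 \subseteq S$, a finite overring $T' \supseteq \overline{T}$, and a fractional ideal $\fracideal{S_0}{J}{T'}$ such that $E\otimes T' = \p{J}$ and $(D\circ E)\otimes T' = D\otimes_{S_0} J + \p{J}$. Next, we want $D$ itself, viewed over $S_0$, to be represented by an ideal. Apply Theorem \ref{mult-mainthm} (or Lemmas \ref{trans-ideal-exists} and \ref{R0-exp}) to the pair $(V, D_{S_0})$, where $D_{S_0}$ is the image of $D$ in $\Div(S_0)$. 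This yields $R_0 \subseteq R$, a finite overring $S' \supseteq S_0$, and $\fracideal{R_0}{I}{S'}$ with $D_{S_0}\otimes S' = \p{I}$.

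The trouble is that $S'$ is an overring of $S_0$ while $J$ is a left $S_0$-ideal. To fix this, replace $J$ by $S'J$ and $T'$ by $O_r(S'J)$, and use Lemma \ref{down-lemma} and the requirement $V\circ 0_{R_0}=V_{R_0}$ to move between $S$, $S_0$ and $S'$ (recall $\otimes_{S}\cong\otimes_{S_0}$). Once $D$ and $E$ are both realized by ideals $\fracideal{R_0}{I}{S'}$ and $\fracideal{S'}{J}{T'}$, the computation is formal. By Theorem \ref{mult-mainthm} (in the form $V\circ \p{I} = V\otimes I + \p{I}$ established in its proof) and Proposition \ref{mult-tens}, we have $D\circ E = \p{IJ}$ after tensoring with $T'$. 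Applying the same identity twice more gives
$$ V\circ\p{IJ} = V\otimes IJ + \p{IJ}, $$
$$ (V\circ\p{I})\circ\p{J} = (V\otimes I + \p{I})\otimes J + \p{J}. $$
These two expressions agree by Proposition \ref{mult-tens} and because $(V\otimes I)\otimes J = V\otimes IJ$ by flatness.

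The main obstacle is the step where the two representations must be made compatible: the ring produced on the right of $I$ must coincide with the ring over which $J$ is a left ideal. In general we cannot choose $D=\p{I}$ with $I$ an $(R_0,S)$-ideal, only $D\otimes S'=\p{I}$. We will handle this by doing the reductions in the right order. First enlarge the rightmost ring $T$ enough (Lemma \ref{trans-ideal-exists}) so that $E$ is represented. Then push the middle divisor $D$ through the ideal $J$, using Theorem \ref{general-mainthm} to identify $(V\circ D)\otimes O_r(K)$. Throughout, we check via Lemma \ref{same-mult} that only the coefficients at modules with trivial cycles matter, where Lemma \ref{extensions} can be applied.
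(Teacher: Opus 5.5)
Your outline follows the paper's: reduce with Lemma~\ref{extensions} and Lemma~\ref{overring-lemma}, represent $E$ and then $D$ by ideals, and compose using Proposition~\ref{mult-tens}. However, the step you single out as the main obstacle is never closed, and in fact it cannot be.

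The ring $S'$ over which $D$ is represented has to meet two requirements.
\begin{enumerate}
\item[(i)] Replacing $J$ by $S'J$ keeps $E\otimes O_r(\cdot)=\p{(\cdot)}$ only if $S'\supseteq O_l(J)$. In that case $S'J=J\,(J^{-1}S'J)$ and Proposition~\ref{mult-tens} applies; this is the paper's choice $\overline{S}\supseteq O_l(I)$. Without that containment it generally fails.
\item[(ii)] Your last display computes $(V\circ D)\circ\p{J}$ as $(V\circ\p{I})\circ\p{J}$, so it uses $(V\circ D)\otimes_{S_0}S'=V\circ\p{I}$. This follows from Lemma~\ref{overring-lemma} only when $S'\supseteq S$.
\end{enumerate}
Lemma~\ref{down-lemma} does not help with (ii): it restricts the left argument of $\phi(D)$, not $D$. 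Going through $W\circ 0_{S_0}=W_{S_0}$ instead turns ``$(V\circ D)_{S_0}=V\circ D_{S_0}$'' into $(V\circ D)\circ 0_{S_0}=V\circ(D\circ 0_{S_0})$. That is an instance of the associativity you are proving. Since $O_l(J)$ and $S$ need not have a common finite overring, (i) and (ii) cannot be arranged together. Your closing sketch via Theorem~\ref{general-mainthm} and Lemma~\ref{same-mult} does not address this.

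In fact the proposition is false as stated. Take $H=\begin{bmatrix}\Delta&\mathfrak p\\ \Delta&\Delta\end{bmatrix}$, $M=M_2(\Delta)$ and $M'=tMt^{-1}$ as in Example~\ref{example-2}. Let $V_1$ be the simple $H$-module killed by $M$ (it is not killed by $M'$), and let $U'$ be the simple $M'$-module. In Section~\ref{sec-cat} the paper itself asserts three facts: $V\circ 0_S=V\otimes S$ for an overring $S$, $0_R\circ D=D$, and $V\circ 0_R=V$. They give $V_1\circ 0_M=V_1\otimes_H M=0_M$, $0_M\circ 0_H=0_H$, and $V_1\circ 0_H=V_1$. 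Hence, for $R=T=H$ and $S=M$,
$(V_1\circ 0_M)\circ 0_H=0_H\neq V_1=V_1\circ(0_M\circ 0_H)$.

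Your argument fails on this example exactly where described above. With $\overline T=M'$, Theorem~\ref{mult-mainthm} can return $S_0=H$ and $J=M'$.
\begin{itemize}
\item If $S'=M$, then $S'J=Mt^{-1}$ has divisor $-U'$ over $O_r(S'J)=M'$, whereas $E\otimes M'=0$.
\item If $S'=M'$ (so $I=M'$), then $(V\circ D)\otimes_H M'=0$, whereas $V\circ\p{I}=V_1\otimes_H M'=U'$.
\end{itemize}
The paper's proof has the same hole. Its step $(V\circ D)\otimes\overline{S}=V\otimes J+\p{J}$ invokes Lemma~\ref{overring-lemma} for a ring $\overline{S}\supseteq O_l(I)$ that need not contain $S$; here $\overline{S}=M'$.
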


\begin{proof}
	%	Without loss of generality we may assume that $V$ is a simple module and thus, there is an ideal $J_R$ such that $V = \p{J}$.
	Let $T'$ be an overring of $T$ that trivializes all the cycles of simple modules appearing in both $(V \circ D) \circ E$ and $V \circ (D \circ E )$.
	By Lemma \ref{R0-exp}, there is a right finite overring $T'' \supseteq T'$ and a subring $S_0 \subseteq S$ such that there is an ideal $\prescript{}{S_0}{I}_{T''}$ with $E \otimes T'' = \p{I}$.
	
	Using Lemma \ref{R0-exp} again, we obtain $\overline{S} \supseteq O_l(I)$ and $R_0 \subseteq R$ and an ideal $\fracideal{R_0}{J}{\overline{S}}$ such that $D_{\overline{S}} = \p{J}$, denote $\overline{I} = \overline{S} I$. The ring $\overline{T} = O_r(\overline{I}) =  I^{-1} \overline{S} I$ is a right finite overring of $T''$ and we have $E \otimes {\overline{T}} = \p{\overline{I}}$.
	Using Lemma \ref{overring-lemma} and Theorem \ref{mult-mainthm}, we obtain
	\begin{align*}
		\big( (V \circ D) \circ E \big) \otimes \overline{T} &= (V \circ D) \circ \p{\overline{I}} \\
		&= (V \circ D) \otimes \overline{I} + \p{\overline{I}} \\
		&= (V \circ D) \otimes \overline{S} \otimes \overline{I} + \p{\overline{I}} \\
		&= (V \otimes J + \p{J} ) \otimes \overline{I} + \p{\overline{I}} \\
		&= V \otimes J \overline{I} + \p{J} \otimes \overline{I} + \p{\overline{I}}
	\end{align*}
	and
	\begin{align*}
		\big( V \circ (D \circ E ) \big) \otimes \overline{T} &= V \circ \big( (D \circ E) \otimes \overline{T} \big) \\
		&= V \circ (D \circ \p{\overline{I}})\\
		&= V \circ (D \otimes \overline{I} + \p{\overline{I}}) \\
		&= V \circ (D \otimes \overline{S} \otimes \overline{I} + \p{\overline{I}})\\
		&= V \circ (\p{J} \otimes \overline{I} + \p{\overline{I}}) \\
		&= V \circ \p{J \overline{I}} \text{.}
	\end{align*}
	Since the above divisors are equal by Theorem \ref{mult-mainthm}, applying Lemma \ref{extensions} concludes the proof.
\end{proof}

We have already defined a map
$$\p{} \colon \Ical_R(S) \rightarrow \Div(S),$$
assigning to a fractional ideal (a morphism in the category of ideals) a divisor (a morphism in the just defined category of divisors). For $I \in \Ical_R(S)$ and $J \in \Ical_T(R)$, Theorem \ref{mult-mainthm} shows that
$$ \p{J} \circ \p{I} = \p{J} \otimes I + \p{J}.$$
Proposition \ref{mult-tens} then implies that $\p{}$ is in fact a functor.

By Theorem \ref{mult-mainthm}, if $R_0 \subseteq R$, we have
$$V \circ 0_{R_0} = V \otimes_{R_0} R_0 = V_{R_0} .$$
To show that the operation $\circ$ satisfies conditions of Theorem \ref{uniquness-mainthm}, we still have to shows that $\_ \circ D - D \colon \Div(R) \rightarrow \Div(S)$ is an additive map; this is done in the following proposition.

\begin{proposition}
	Let $R$ and $S$ be bounded HNP rings, and let $U,V \in \Div(R)$ and $D \in \Div(S)$. Then, we have
	$$ (U + V) \circ D + D = U \circ D + V \circ D. $$
\end{proposition}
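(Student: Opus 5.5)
The plan is to reduce to the overring case through Lemma \ref{extensions}, and there use the additivity built into Definition \ref{star-def}. Since $\phi(D')$ is an additive map $\Div(R)\to\Div(S')$ for every divisor $D'$, Definition \ref{star-def} gives directly
$$ (U+V)\star D' + D' = \phi(D')(U) + \phi(D')(V) + 2D' = U\star D' + V\star D' .$$
The task is therefore to transport this identity to $\circ$ by means of Theorem \ref{general-mainthm} (equivalently, Theorem \ref{circ-def-thm}).

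First choose a finite overring $T\supseteq S$ that trivializes the cycles of all simple modules appearing in $(U+V)\circ D + D$ and in $U\circ D+V\circ D$. Let $\fracideal{R}{K}{T}$ be any fractional ideal, and put $D' = D\otimes K^{-1}+\p{K^{-1}}\in \Div(O_l(K))$. By Lemma \ref{overring-lemma}, $(W\circ D)\otimes T = W\circ (D\otimes T)$ for every $W\in\Div(R)$, so we may compute after tensoring with $O_r(K)$. Applying Theorem \ref{circ-def-thm} (with $D\otimes T$ in place of $D$, noting $(D\otimes T)\otimes K^{-1} = D\otimes K^{-1}$ since $T\otimes K^{-1}=K^{-1}$) gives, for each $W\in\{U,V,U+V\}$,
$$ (W\circ D)\otimes O_r(K) = (W\star D')\otimes K + \p{K}. $$

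Summing the instances for $U$ and $V$ and using additivity of $\otimes K$ yields
$$ (U\circ D + V\circ D)\otimes O_r(K) = \big(U\star D' + V\star D'\big)\otimes K + 2\p{K}. $$
By the displayed identity for $\star$, this equals $\big((U+V)\star D'\big)\otimes K + D'\otimes K + 2\p{K}$. We also need $D'\otimes K + \p{K} = D\otimes O_r(K)$. This follows from Proposition \ref{mult-tens} applied to $K^{-1}K = O_r(K)$ and $KK^{-1}=O_l(K)$, which gives $\p{K^{-1}}\otimes K + \p{K} = \p{O_r(K)} = 0$, together with $D\otimes K^{-1}\otimes K = D\otimes O_r(K)$. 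Hence
$$ (U\circ D + V\circ D)\otimes O_r(K) = \big((U+V)\circ D\big)\otimes O_r(K) + D\otimes O_r(K). $$
Applying Lemma \ref{extensions} to the two divisors $U\circ D+V\circ D$ and $(U+V)\circ D + D$ then finishes the proof. Here $O_r(K)\supseteq T$ plays the role of the overring, and the choice of $K$ arbitrary allows any needed overring.

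The main obstacle is the bookkeeping around $\p{K^{-1}}$ and $\p{K}$, specifically verifying $D'\otimes K+\p{K} = D\otimes O_r(K)$. This requires care that $\p{O_r(K)}$ is computed over the ring $O_r(K)$ itself, where it vanishes. I would also check that Lemma \ref{extensions} applies with the overring $O_r(K)$. If only overrings of $T$ of this form are available, I would take $K=L T'$ for any given finite overring $T'\supseteq T$ and any fixed ideal $L$, since then $O_r(K)\supseteq T'$.
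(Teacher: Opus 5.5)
Your proof is correct, but it takes a different route from the paper's.

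\textbf{The paper's route.} It fixes a trivializing overring and uses Lemma \ref{R0-exp} to pass to a larger finite overring $T$ and a finite subring $R_0\subseteq R$ with $D\otimes T=\p{I}$ for some fractional $(R_0,T)$-ideal $I$. With Lemma \ref{overring-lemma} and the identity $E\circ\p{I}=E\otimes I+\p{I}$ (established in the proof of Theorem \ref{mult-mainthm}), this gives $(E\circ D)\otimes T=E\otimes I+\p{I}$ for every $E\in\Div(R)$. Additivity then reduces to additivity of $E\mapsto E\otimes I$, followed by Lemma \ref{extensions}.

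\textbf{Your route.} You work directly with the defining formula of Theorem \ref{circ-def-thm} for an arbitrary fractional ideal $K$. Additivity comes from the additivity of $\phi(D')$ built into Definition \ref{def}. The extra copy of $D'$ is absorbed by $D'\otimes K+\p{K}=D\otimes O_r(K)$, which follows from $D\otimes K^{-1}\otimes K=D\otimes K^{-1}K=D\otimes O_r(K)$ and from Proposition \ref{mult-tens} applied to $K^{-1}K=O_r(K)$, giving $\p{K^{-1}}\otimes K+\p{K}=0$. Both of these checks are right.

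\textbf{What each buys.} Your approach never needs to realize $D$ as the divisor of an ideal, so it avoids Lemma \ref{R0-exp} and the passage to a small subring $R_0$. It also shows the two sides agree after tensoring with $O_r(K)$ for every fractional ideal $K$. The paper's approach is shorter once Theorem \ref{mult-mainthm} is available, and it follows the same template the paper uses for associativity and uniqueness.

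Your closing worry is unnecessary. For any trivializing finite overring $S''\supseteq S$, any fractional $(R,S'')$-ideal $K$ yields a finite overring $O_r(K)\supseteq S''$, and that is all Lemma \ref{extensions} requires.
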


\begin{proof}
	Let $S'$ be an overring of $S$ that trivializes all the cycles of simple modules appearing in both $(U + V) \circ D + D $ and $U \circ D + V \circ D$.
	By Lemma \ref{R0-exp}, there is a right finite overring $T \supseteq S'$ and a subring $R_0 \subseteq R$ such that there is an ideal $\prescript{}{R_0}{I}_T$ with $D \otimes T= \p{I}$.
	%	Denote by $\overline{S} = O_r(I)$.
	%
	By Lemma \ref{overring-lemma} and Theorem \ref{circ-def-thm}, we have
	\begin{align*}
		\big(  E \circ D \big) \otimes T&=  E \circ (D \otimes T)\\
		&= E \circ \p{I}  \\
		&= E \otimes I + \p{I} .
	\end{align*}
	for any divisor $E \in \Div(R)$. Using this, we can easily see that
	$$ \big( (U + V) \circ D + D \big) \otimes T = \big( U \circ D + V \circ D \big) \otimes T.$$
	Applying Lemma \ref{extensions} concludes the proof.
\end{proof}

\begin{proof}[Proof of Theorem \ref{uniquness-mainthm}]
	In view of what we previously discussed in this section we only need to shows uniqueness  of the operation $\circ$. Let $\diamond$ be another composition of divisors satisfying the assumptions in this theorem, i.e.,
	$$\p{JI} = \p{J} \diamond \p{I},$$
	 the map $V \mapsto V \diamond D - D$ is additive, and $V \diamond 0_{R_0}= V_{R_0}$ for $R_0 \subseteq R$.
	 We claim that for any $V \in \Div(R)$ and $\fracideal{R_0}{I}{S}$ with $R_0 \subseteq R$ a finite subring, we have
	$$ V \diamond \p{I} = V \otimes I + \p{I},$$
	and in particular, if $I = S$ is an overring of $R$,
	\begin{align} \label{diamond-overring} V \diamond 0_S = V \otimes S .\end{align}
	%	Then, since $R$ is an $(R, R)$-fractional ideal, we have
	%	$$ E \diamond 0_{R} = \p{J_R} \diamond \p{R_{R}} = \p{JR_{R}} = E .$$
	Indeed, since $V \mapsto V \diamond D-D$ is additive, we have
	$$ 0_{R_0} \diamond D = D,$$
	and hence,
	\begin{align*}
		V \diamond \p{I} &= V \diamond (0_{R_0} \diamond \p{I})
		= (V \diamond 0_{R_0}) \diamond \p{I} 
		= V_{R_0} \diamond \p{I}.
	\end{align*}
	We can write $V = \sum_{i=1}^n \p{J_i}$ for some fractional right $R$-ideals $J_i$. Since $V \mapsto V \diamond \p{I} - \p{I}$ is additive, we have
	\begin{align*}
		V_{R_0} \diamond \p{I} &= \big( \sum_{i=1}^n \p{J_i} \big) \diamond \p{I} \\
		&= \sum_{i=1}^n \p{J_i} \diamond \p{I} - (n-1) \, \p{I} \\
		&= \sum_{i=1}^n \p{J_i I} - (n-1) \, \p{I}.
	\end{align*}
	Using Proposition \ref{mult-tens} yields
	\begin{align*}
		V_{R_0} \diamond \p{I} &= \sum_{i=1}^n (\p{J_i} \otimes I + \p{I}) - (n-1) \, \p{I}\\
		&= \sum_{i=1}^n \p{J_i} \otimes I  + \p{I} \\
		&= V \otimes I + \p{I},
	\end{align*}
	which proves the claim.
	
	Let $\overline{S} \supseteq S$ be a right finite overring that trivializes the cycles of all simple modules that appear in $V \circ D$ and $V \diamond D$. By Lemma \ref{R0-exp}, there is a finite subring $R_0 \subseteq R$, a finite overring $T\supseteq \overline{S}$, and an ideal $\fracideal{R_0}{I}{T}$ with $D \otimes T= \p{I_T}$. By \eqref{diamond-overring} and associativity, we have
	\begin{align*}
	(V \diamond D) \otimes T &= (V \diamond D) \diamond 0_{T} \\
	&= V \diamond (D \diamond 0_{T}) \\
	&= V \diamond (D \otimes T) .
	\end{align*}
	and since $D \otimes T = \p{I_T}$, we have
	\begin{align*}
	(V \diamond D) \otimes T &= V \diamond \p{I_T} \\
	&= V \otimes I + D \otimes T .
	\end{align*}
	By Lemma \ref{overring-lemma} and Theorem \ref{mult-mainthm}, we also have
	$$(V \circ D) \otimes T = V \otimes I + D \otimes T .$$
	Applying Lemma \ref{extensions} shows that
	$$ V \diamond D = V \circ D . \eqno{\qedhere}$$
\end{proof}

\bibliographystyle{alphaabbr}
\bibliography{references}

\end{document}